\title{Fast relaxation of the random field Ising dynamics}
\author{Ahmed El Alaoui\thanks{Department of Statistics and Data Science, Cornell University.} \and Ronen Eldan\thanks{Microsoft Research and Weizmann Institute of Science.} \and Reza Gheissari\thanks{Department of Mathematics, Northwestern University.}  \and Arianna Piana\thanks{Department of Mathematics, Weizmann Institute of Science.}}
\numberwithin{equation}{section}
\theoremstyle{plain}
\theoremstyle{plain}
\newtheorem{Thm}{Theorem}[section]
\newtheorem{Lem}[Thm]{Lemma}
\newtheorem{Prop}[Thm]{Proposition}
\newtheorem{Cor}[Thm]{Corollary}
\newtheorem*{theorem*}{Theorem}
\theoremstyle{definition}
\newtheorem{Def}[Thm]{Definition}
\newtheorem{Ass}[Thm]{Assumption}
\newtheorem{Ex}[Thm]{Example}
\newmdtheoremenv{theo}{Theorem}
\newtheorem{lemma}[Thm]{Lemma}
\theoremstyle{remark}
\newtheorem{Rem}[Thm]{Remark}
\newtheorem{Obs}[Thm]{Observation}
\newcommand{\var}{\mathrm{Var}}
\newcommand{\cov}{\mathrm{Cov}}
\newcommand{\op}{\textup{op}}
\newcommand{\mix}{_\mathrm{mix}}
\newcommand{\dd}{\mathrm{d}}
\newcommand{\rmd}{\mathrm{d}}
\newcommand{\tr}{\mathrm{Tr}}
\newcommand{\EE}{{\mathbb{E}}}
\newcommand{\E}{{\mathbb{E}}}
\renewcommand{\P}{{\mathbb{P}}}
\newcommand{\R}{{\mathbb{R}}}
\newcommand{\ZZ}{{\mathbb{Z}}}
\def\In{\Lambda_n}
\newcommand{\cont}{\mathrm{cont}}
\newcommand{\real}{\mathbb{R}}
\newcommand{\one}{\mathbf{1}}
\newcommand{\eps}{\varepsilon}
\newcommand{\mE}{\mathcal{E}}
\def\sTV{\mbox{\rm\tiny TV}}
\newcommand{\salg}{\mbox{\rm\tiny alg}}
\newcommand{\RFIM}{\text{\rm\small RFIM}}
\newcommand{\SL}{\text{\rm\small SL}}
\newcommand{\WSM}{\text{\rm\small WSM}}
\newcommand{\SSM}{\text{\rm\small SSM}}
\newcommand{\FKG}{\text{\rm\small FKG}}
\newcommand{\osc}{\mathrm{osc}}
\newcommand{\bad}{\mathsf{Bad}}
\newcommand{\good}{\mathsf{Good}}
\newcommand{\influence}{{\mathsf{i}}}
\newcommand{\cC}{\ensuremath{\mathcal C}}
\newcommand{\cB}{\ensuremath{\mathcal B}}
\newcommand{\gap}{\text{\tt{gap}}}
\DeclareMathOperator{\sgn}{sgn}
\date{}
\begin{document}
\maketitle
\begin{abstract}
We study the convergence properties of Glauber dynamics for the random field Ising model (RFIM) with ferromagnetic interactions on finite domains of $\mathbb{Z}^d$, $d \ge 2$. 
Of particular interest is the \emph{Griffiths phase} where correlations decay exponentially fast in expectation over the quenched disorder, but there exist arbitrarily large islands of weak fields where low-temperature behavior is observed. 
Our results are twofold: 

1. Under \emph{weak spatial mixing}  (boundary-to-bulk exponential decay of correlations) in expectation, we show that the dynamics satisfy a \emph{weak} Poincar\'e inequality 
implying polynomial relaxation to equilibrium over timescales polynomial in the volume $N$ of the domain, and polynomial time mixing from a warm start. From this we construct a polynomial-time approximate sampling algorithm
based on running Glauber dynamics over an increasing sequence of approximations of the domain. 

2. Under \emph{strong spatial mixing} (exponential decay of correlations even near boundary pinnings) in expectation, we prove a \emph{full} Poincar\'e inequality, implying exponential relaxation to equilibrium and $N^{o(1)}$-mixing time.        
Both weak and strong spatial mixing hold at any temperature, provided the external fields are strong enough. 

Our proofs combine a stochastic localization technique which has the effect of increasing the variance of the field, with a field-dependent coarse graining which controls the resulting sub-critical percolation process of sites with weak fields.
\end{abstract}

\section{Introduction}

Given a finite graph $G = (V,E)$, the random field Ising model ($\RFIM$) at inverse-temperature $\beta>0$ on $G$ is the random probability measure on $\{-1,+1\}^V$ given by
 \begin{equation}\label{eq:mu_RFIM}
\mu_G(\sigma) \propto \, \exp\big\{  -  H(\sigma)\big\},~~~ \sigma \in \{-1,+1\}^V \, , 
\end{equation}
where $H$ is the random Hamiltonian  
\begin{align}\label{eq:Hamiltonian}
    H(\sigma) =   - \beta\sum_{(u,v) \in E} \sigma_u \sigma_v - \sum_{u \in V} h_u \sigma_u\,,
\end{align}
with $h=(h_u)_{u \in V}$ i.i.d.\ random variables with a symmetric distribution. 
The vector $h$ is referred to as the external field, and is viewed as a quenched source of disorder.   

The $\RFIM$ has seen much attention in the statistical physics literature on (subgraphs of) $\ZZ^d$ for $d\ge 2$. 
In $d=2$, Imry and Ma~\cite{Imry-Ma} famously predicted in 1975 that the presence of an arbitrarily small random field suppresses the low-temperature phase of the classical (zero-field $h = 0$) Ising model, so that at all values of $\beta$, the measure~\eqref{eq:mu_RFIM} exhibits uniqueness and decay of correlations. After much debate within the physics community, this prediction was confirmed by Aizenman and Wehr~\cite{Aizenman-Wehr} who showed that at all $\beta>0$ and for external fields with non-trivial distribution and of arbitrarily small but non-zero variance, the expected influence on the spin at the origin of boundary conditions at distance $r$ away must decay with $r$. 
A recent flurry of activity on this problem gave quantitative rates on this decay of influence with $r$~\cite{Chatterjee-RFIM,Aizenman-Peled,Aizenman-Harel-Peled}, and in the Gaussian case $h_u \sim N(0,\sigma^2)$ culminated in the breakthrough work~\cite{DingXia-RFIM} showing that the influence decays exponentially at all temperatures and $\beta>0$ and all field variances $\sigma^2>0$. 
In dimensions $d\ge 3$, the $\RFIM$ behaves very differently. If the field is sufficiently large as a function of $\beta$, influences decay exponentially fast in expectation over the field~\cite{Frohlich-Imbrie}. But unlike in $d=2$, when $d\ge 3$ if the disorder is relatively small, there is a phase transition as $\beta$ is varied. The lack of influence decay was shown in~\cite{Imbrie} at $\beta = \infty$ and at $\beta$ sufficiently large in \cite{Bricmont-Kupiainen-RFIM}, and more recently in the entire low temperature phase in~\cite{ding2022long}.

In this paper, we are interested in the behavior of the \emph{Glauber dynamics} for the $\RFIM$ on (subsets of) $\mathbb Z^d$ for general $d\ge 2$. The Glauber dynamics is the continuous-time Markov process $(X_t(v))_{v \in V, t\ge 0}$, reversible w.r.t.\ $\mu_G$, which assigns each vertex $v\in V$ a rate-1 Poisson clock, and when the clock at $v\in V$ rings at time $t$, it resamples $X_t(v)$ according to the conditional distribution 
\begin{equation}\label{eq:glauber}
X_t(v) \,\sim\, \mu_G\Big(\sigma_v \in \cdot \,\,\Big|\, (\sigma_w)_{w\ne v} = (X_{t^-}(w))_{w\ne v}\Big)\, .
\end{equation}
This process was introduced in 1963 in~\cite{Glauber} and has since seen immense attention both as a physical model for the thermodynamic evolution of an Ising system out of equilibrium, and as a Gibbs sampler MCMC algorithm for the measure $\mu_G$. The central quantity to study for the Glauber dynamics is its rate of convergence to $\mu_G$, typically measured either in $\ell^2$, via spectral gap or Poincar\'e inequalities, or total-variation distance, via mixing times. 

For the classical Ising model (zero-field), the convergence rate to equilibrium of the Glauber dynamics has been extensively studied over the last thirty years, and is by now fairly well understood. Namely, for the Ising Glauber dynamics on $\Lambda_n = [-n,n]^d \cap \mathbb Z^d$, weak and strong spatial mixing properties (exponential decay of correlations away from boundary conditions, and near boundary conditions, respectively) have been shown to imply order-1 inverse spectral gap, and exponentially fast relaxation to equilibrium~\cite{MaOl1,MaOl2}. These spatial mixing properties hold at all high temperatures $\beta<\beta_c(d)$ in all $d\ge 2$, where $\beta_c(d)$ is the critical inverse temperature of the zero-field Ising model on $\ZZ^d$. Implications from spatial mixing to temporal mixing have since been a central tool in the analysis of mixing times of spin systems on lattices: see e.g.,~\cite{Cesi,DSVW,LS-cutoff,GhSi-spatial-mixing-RC}. 
On the other hand, as soon as $\beta>\beta_c(d)$, the mixing time becomes exponentially slow in $n^{d-1}$ due to the bottleneck between having most spins take the value $+1$ and most spins take the value $-1$: we refer to the lecture notes~\cite{MartinelliLectureNotes} for references on mixing times of Ising Glauber dynamics on $\mathbb Z^d$.

In the presence of quenched disorder, as in the $\RFIM$, such implications going from exponential decay of correlations (now only assumed to hold in expectation over the disorder $h$--see Definition~\ref{ass:general-WSM} for the formal definition) to fast relaxation to equilibrium may no longer hold. This is because of what is known as \emph{Griffiths singularities}~\cite{GriffithsSingularity}, whereby if $\beta>\beta_c(d)$, there are small but positive density regions in $\Lambda_n$ where the field is very close to zero, and low-temperature behavior is observed, even though in expectation over the field, correlations decay exponentially. 
In fact, it can be shown that in this \emph{Griffiths phase}, the dynamics can slow down so that even at large field values, the spectral gap has to tend to zero at least super-logarithmically in $n$ if $\beta>\beta_c(d)$ (see e.g.,~\cite{CMM-Disordered-magnets} in the context of random interaction strengths rather than random fields, or our Section~\ref{sec:lower-bound}). 
In particular, this precludes straightforward applicability of the recently developed general purpose tools, such as spectral independence, to deduce fast mixing under deterministic influence or covariance matrix bounds; e.g.,~\cite{ALO,EntropicIndependence1,BauerschmidtBodineau,EKZ,chen2022localization}. 
This begs the question, most pertinent in the regime where exponential decay of correlations holds for the $\RFIM$ in expectation, but not for the corresponding zero-field model---e.g., $\beta>\beta_c(d)$ and large variance field---of how fast the Glauber dynamics relaxes to equilibrium.  

The $\RFIM$ dynamics are also interesting from the perspective of computational complexity of approximate sampling and counting: the task of approximately sampling from the zero-field Ising distribution was shown to be algorithmically tractable in polynomial time on general graphs in~\cite{Jerrum-Sinclair}. On the other hand, with arbitrary external fields, the sampling task is $\text{\rm\small\#BIS}$ hard~\cite{Ising-with-field-BIS-hard}, namely as hard as approximately counting the number of independent sets on a bipartite graph, a task expected to be intractable in polynomial time. From this perspective, the $\RFIM$ can be viewed as a proxy for the average-case hardness of the sampling task. Recently, it was shown in~\cite{HLPRW-RFIM-sampling} that on general graphs, if the variance of the fields is sufficiently large, there exists a polynomial-time approximation algorithm for the $\RFIM$ which is based on the self-avoiding walk tree of Weitz~\cite{Weitz}. The authors of \cite{HLPRW-RFIM-sampling} pose the question of whether Markov chain approaches can provide a similar polynomial-time sampling scheme.

In this paper, we prove Poincar\'e-type inequalities and bound the rate of convergence for the $\RFIM$ Glauber dynamics on boxes of side-length $n$ in $\mathbb Z^d$ whenever it exhibits exponential decay of correlations in expectation. The form of exponential decay falls into two classes, weak spatial mixing and strong spatial mixing: the distinction is that the latter requires the exponential decay of correlations in expectation to hold even in the presence of frozen nearby spins (i.e., nearby boundary conditions). 
Our main result is that under the weak spatial mixing assumption, the Glauber dynamics satisfies a \emph{weak Poincar\'e inequality}, and converges at least algebraically fast, i.e., inverse-polynomially in time, to equilibrium on polynomial in $n$ timescales: see Theorem~\ref{thm:algebraic_decay}. This also translates to a polynomial-time sampling scheme built from running Glauber dynamics chains on domains successively approximating $\Lambda_n$: see Theorem~\ref{thm:sampling_alg}. Under the stronger assumption of strong spatial mixing in expectation, which for instance holds at any $\beta$ if the field variances are sufficiently large as a function of $\beta$, we show a (full) Poincar\'e inequality with inverse spectral gap that is $n^{o(1)}$, and consequently, exponentially fast relaxation to equilibrium after such timescales: see Theorem~\ref{mainthm:SSM}. 

We note that as a consequence of the recently discovered  correlation inequality of~\cite{ding2022new}, strong spatial mixing holds \emph{uniformly} in the external field for $\beta<\beta_c(d)$, and hence one already obtains order-1 spectral gap for the $\RFIM$ Glauber dynamics by standard implications; see~\cite{MartinelliLectureNotes}. Our results are thus most pertinent in the Griffiths phase discussed above.   

\subsection{Main results}
In what follows, our domain of most interest is $\Lambda_n = [-n,n]^d \cap \mathbb Z^d$ endowed with nearest-neighbor adjacency. To state our results and execute our arguments, we will also have to consider the $\RFIM$ measure $\mu_{\Lambda}$ on general finite domains $\Lambda \subset \ZZ^d$. (By abuse of notation we let $\Lambda$ denote the subgraph of $\ZZ^d$ whose vertex set is $\Lambda$.)
We will always let $N = |\Lambda|$ be the volume of the domain under consideration, and we write $\P_h$ for the law of the random field $h$. 

For a vertex $o\in \ZZ^d$ and an integer $r\ge0$, we denote by $B_r(o)$ the ball centered at $o$ with radius $r$ in the $\ell_\infty$ distance in $\ZZ^d$.
For a subset $A \subseteq \Lambda$ we let $\sigma_A = (\sigma_u)_{u \in A}$. We use $\partial A$ to denote the exterior vertex boundary of $A$ (with respect to $\Lambda$). 
Given a `boundary condition' $\tau \in  \{-1,+1\}^{\partial A}$ we define 
\begin{equation}\label{eq:mu_pinned}
\mu_{A}^{\tau}(\sigma_A)  := \mu_{\Lambda}(\sigma_A \in \cdot \, |\,  \sigma_{\partial A} = \tau) \, ,
\end{equation}
as the distribution of $\sigma_A$ induced by the $\RFIM$ measure $\mu_{\Lambda}$ on $\{-1,+1\}^A$ conditional on $\sigma_{\partial A} = \tau$. 
 When $\tau$ is the all `$+$' or the all `$-$' configuration we simply write  $\mu_{A}^{+}$ and $\mu_{A}^{-}$ respectively.


\paragraph{Weak spatial mixing.} Our first assumption is of \emph{weak spatial mixing} ($\WSM$) type where in expectation, the influence of a boundary on the `root' spin decays exponentially in the distance from the boundary. This minimal level of exponential decay should hold throughout the uniqueness region of the $\RFIM$. 

\begin{Def}\label{ass:general-WSM}
    We say that the $\RFIM$ on $\Lambda \subset \ZZ^d$ satisfies $\WSM(C)$ for some constant $C>0$ if for all $o \in \Lambda$ and all $r\ge 1$ we have 
    \begin{align}\label{eq:expdecay}
       \EE_h \, \big[d_{\sTV} \big( \mu_{B_r(o)\cap\Lambda}^{+}(\sigma_o \in \,\cdot\,)\, ,\, \mu_{B_r(o)\cap \Lambda}^{-}(\sigma_o \in \,\cdot\,) \big)\big] \leq C e^{- r/C}  \, ,
    \end{align}
    where $d_{\sTV}(\cdot,\cdot)$ is the total-variation distance, and the expectation is over the field $h$. Importantly, all boundary pinnings in $\WSM$ are at distance at least $r$ from $o$ (c.f.\ Definition~\ref{assump:SSM-RFIM0} of $\SSM$). 
\end{Def}

Denote by $(P_t)_{t \ge 0}$ the Markov semigroup for the continuous-time Glauber dynamics~\eqref{eq:glauber}: i.e., for a given $h$ realization, $P_t\varphi(\sigma) = \EE[ \varphi(X_t) | X_0=\sigma]$ for tests function $\varphi: \{-1,+1\}^{V} \to \real$. For a probability measure $\mu$, let $\var_{\mu}(\varphi)=\mu(\varphi^2) - \mu(\varphi)^2$ be the variance of $\varphi$ under $\mu$, and let $\osc(\varphi) = \max \varphi - \min \varphi$ be the (square of the) oscillation of $\varphi$.

Under Definition~\ref{ass:general-WSM}, we show algebraic convergence to equilibrium for any test function, after polynomial in $|\Lambda|$ time, and the existence of a polynomial time sampling algorithm.  

\begin{Thm}\label{thm:algebraic_decay}
Suppose the $\RFIM$ on $\Lambda_n$ satisfies $\WSM(C)$ for some $C>0$. Then there exists $K, \alpha, \kappa>0$ depending on $d, \beta, C$ such that for all $\delta >0$ the following holds with $\P_h$-probability $1-\delta$: 
For all test functions $\varphi:  \{-1,+1\}^{\In} \rightarrow \R$ and all $t > 0$, 
 \begin{equation}\label{eq:algebraic}
        \var_{\mu_{\In}}(P_t \varphi) \le K n^{\kappa} \, \osc(\varphi)^2 \big/ (\delta t^{\alpha})\,,
    \end{equation}
i.e., the continuous-time Glauber dynamics for the $\RFIM$ on $\Lambda_n$ has algebraic convergence to equilibrium on $n^{\kappa/\alpha}$ timescales.   
\end{Thm}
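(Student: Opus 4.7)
The plan is to pass through an intermediate \emph{weak Poincar\'e inequality} for $\mu_{\In}$: namely, establish that with $\P_h$-probability at least $1-\delta$, for every test function $\varphi : \{-1,+1\}^{\In}\to\R$ and every $u \in (0,1)$,
\begin{equation*}
\var_{\mu_{\In}}(\varphi) \;\le\; r(u)\,\mathcal{E}(\varphi,\varphi) \;+\; u\,\osc(\varphi),
\end{equation*}
where $\mathcal{E}$ is the Dirichlet form of~\eqref{eq:glauber} and $r(u) = K n^{\kappa_0} u^{-\gamma}$ for constants $\gamma,\kappa_0>0$ depending on $d,\beta,C$ (the $\delta$-dependence being absorbed into $K$). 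Once this is in hand, a standard Gronwall argument, applied to $V(t):=\var_{\mu_{\In}}(P_t\varphi)$ via $V'(t) = -2\mathcal{E}(P_t\varphi,P_t\varphi) \le -2\bigl(V(t) - u\,\osc(\varphi)\bigr)/r(u)$ and optimizing $u$ at each $t$, produces~\eqref{eq:algebraic} with $\alpha = 1/\gamma$ and $\kappa = \kappa_0/\gamma$.

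To prove the weak Poincar\'e inequality, I would use a stochastic localization of the quenched field: replace $h$ by $h + \sqrt{t}\,g$ with $g\sim\mathcal{N}(0,I)$ independent of $h$, letting $\mu_{\In}^{g,t}$ denote the resulting tilted $\RFIM$ measure (with an accompanying Radon--Nikodym tilt so that $\mu_{\In}$ is the $g$-mixture of these measures). The law of total variance decomposes $\var_{\mu_{\In}}(\varphi)$ into a $g$-average of quenched variances and the $g$-variance of $g \mapsto \EE_{\mu_{\In}^{g,t}}[\varphi]$. The second piece is the source of the additive $u\,\osc(\varphi)$ term (after truncating to the probability-$1-u$ event that $g$ lies in a typical bulk). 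The first piece is reduced to a quenched Poincar\'e inequality for the tilted measure $\mu_{\In}^{g,t}$. At localization time $t$ large enough (but still $O(1)$), the tilted field $h + \sqrt{t}g$ exceeds the high-field threshold $M(\beta,d)$ at every vertex outside a \emph{bad} set $B_t$ whose density is arbitrarily small, so that $B_t$ is a sub-critical site percolation on $\ZZ^d$. With $g$-probability at least $1-u/2$, the connected components of $B_t$ all have diameter $O(\log(n/u))$.

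Conditioning on the good-site complement of $B_t$, a Dobrushin/high-field argument gives an $O(1)$ Poincar\'e constant for the conditional measure; the WSM assumption then allows one to decouple the bad components through the surrounding high-field sea, by an adaptation of the Martinelli--Olivieri block-dynamics recursion to the disordered setting. Each bad island $I$ contributes a multiplicative factor $e^{C|I|} = \mathrm{poly}(n/u)$ from a brute-force finite-volume Poincar\'e inequality, and the decoupling across islands contributes a further polynomial factor controlled by~\eqref{eq:expdecay}. Combining all islands yields a quenched Poincar\'e constant of order $n^{\kappa_0}u^{-\gamma}$ on the good $(h,g)$-event, which after a union bound and Markov's inequality using the $\WSM(C)$ expectation is a $\P_h$-event of measure at least $1-\delta$.

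The main obstacle is the last step. The WSM hypothesis gives exponential decay of influence only \emph{in expectation over $h$}, which is much weaker than a quenched exponential decay valid for every field configuration. So the block-dynamics decoupling through the good-site sea cannot be applied directly; instead one needs a two-scale scheme, where the inner scale is chosen so that a sufficiently strong concentration of WSM holds on typical good regions (via Markov applied to the WSM bound, with room for a union bound over $O(n^d)$ balls), and the outer scale uses the subcritical geometry of $B_t$ to chain these local WSM estimates into a global Poincar\'e bound. Trading the loss from these union bounds against the localization parameter $t$ and the weak-PI parameter $u$ is what fixes the exponents $\gamma,\kappa_0$ and ultimately $\alpha,\kappa$ in~\eqref{eq:algebraic}.
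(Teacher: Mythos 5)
Your overall architecture matches the paper's: a weak Poincar\'e inequality (your additive form $\var \le r(u)\,\mathcal E + u\,\osc$ is equivalent, via Young's inequality, to the multiplicative form $\var \le A\,\mathcal E^{1/p}\osc^{1/q}$ the paper uses), followed by the Liggett/Gronwall integration; and the weak PI itself is obtained by stochastically localizing to boost the field and then proving a full Poincar\'e inequality for the localized measure via a subcritical-percolation picture of the weak-field islands. The last of these steps is also essentially what the paper does (field-dependent coarse-graining, block dynamics, cut-width bounds on the islands), and your two-scale remedy for the fact that $\WSM$ holds only in expectation is in the right spirit. In fact the paper's Poincar\'e inequality for the localized measure needs no $\WSM$ at all, only anti-concentration of the (independent) field magnitudes.

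The genuine gap is in how you dispose of the second term in the law of total variance, $\var_g\bigl(\EE_{\mu_{\In}^{g,t}}[\varphi]\bigr)$. You assert it is at most $u\,\osc(\varphi)$ ``after truncating to the probability-$1-u$ event that $g$ lies in a typical bulk.'' Restricting a bounded random variable to a set of probability $1-u$ does not make its variance small: $g\mapsto \EE_{\mu^{g,t}}[\varphi]$ can fluctuate by order $\osc(\varphi)$ on the typical set. Indeed, for $\varphi=\sigma_v$ with $v$ a weak-field vertex, after an $O(1)$ localization time the conditional mean is close to $\pm 1$ with roughly equal probability, so this term is of order $1$, not $u$. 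This term is precisely the variance ``lost'' to the localization, and controlling it is the technical heart of the paper: one shows $\tfrac{\dd}{\dd t}\EE[\var_{\nu_t}(\varphi)] \ge -\EE[\var_{\nu_t}(\varphi)\,\|\cov(\nu_t)\|_{\op}]$ and integrates, which requires a tail bound on $\|\cov(\nu_t)\|_{\op}$ \emph{uniformly along the localization}. That bound (Theorem~\ref{thm:trace}) is where $\WSM$ actually enters, through a trace-method estimate $\EE[\tr(\cov(\nu_t)^p)]\le (C_0p)^pN$ combining (i) an $\FKG$-based super-martingale property of the localized boundary-influence functionals $\delta_t(u,\ell)$, (ii) approximate independence of these influences at well-separated vertices via the Bayesian representation of the localization, and (iii) a combinatorial counting argument. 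Because the operator norm is only controlled with exponential tails (typical size $\log N$), even the correct conclusion is not a clean variance conservation but the weak form $\var_{\nu_0}(\varphi)\le (N/\delta)^{1/q}\EE[\var_{\nu_T}(\varphi)\mid h]^{1/p}\osc(\varphi)^{1/q}$ --- which is exactly what forces the final inequality to be a \emph{weak} Poincar\'e inequality with an $\osc(\varphi)$ factor. Your proposal has no mechanism standing in for this covariance control, so the decomposition does not close as written.
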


\begin{Rem}
Our techniques can also be used to extend Theorem~\ref{thm:algebraic_decay} to the case of the torus $T_n = (\mathbb Z/n\mathbb Z)^d$. There, the vertex transitivity reduces $\WSM(C)$ to the expected difference between the marginals on the origin in boxes of radius $r$ with `$+$' and `$-$' boundary conditions.
\end{Rem}

The above algebraic convergence to equilibrium is a consequence of a \emph{weak Poincar\'e inequality}~\cite{rockner2001weak} and accompanying large set expansion. These notions have received attention since the classical work of Liggett~\cite{liggett1991l_2} on critical interacting particle systems, see e.g.,~\cite{Andrieu-weakPI-AoS,andrieu2022poincar,Grothaus-AoP-weakPI}. 
The bound of~\eqref{eq:algebraic} is weaker than what would be implied by a spectral gap lower bound in two ways: (1) the decay in time is inverse-polynomial (algebraic) instead of exponential, and (2) the right-hand side of the inequality features the oscillation of the test function rather than its variance w.r.t\ $\mu_{\In}$. 
A slightly stronger result which does not degenerate at $t=0$ and recovers the trivial inequality $ \var_{\mu_{\In}}(P_t \varphi) \le  \var_{\mu_{\In}}(\varphi)$ can be found in Section~\ref{sec:proof_algebraic_decay}, Eq.~\eqref{eq:w_more_precise}.          

Next we let $\mathcal{Q}_n$ denote the collection of subsets of $\Lambda_n$ of the form $B_r(o) \cup (\partial B_r(o) \cap A)$, $A \subset \ZZ^d$, $r\le n-1$. These are `cube-like' domains where the boundary may be irregular. The relevance of such domains will become clear in the next paragraph.    
\begin{Thm}\label{thm:sampling_alg}
Suppose that for some $C>0$, for all $\Lambda \in \mathcal{Q}_n$, the $\RFIM$ on $\Lambda$ satisfies $\WSM(C)$. Then for any $\eps > 0$ and $\delta \in (0,1)$ there exists a randomized algorithm taking as input $\beta, h$ and $\Lambda_n$ which runs in time $n^{c}$ for $c=c(\eps,\delta,\beta,C,d)>0$, and outputs a random configuration $\sigma^{\salg} \in \{-1,+1\}^{\Lambda_n}$ with law $\mu_{\Lambda_n}^{\salg}$ such that with $\P_h$-probability $1-\delta$, 
 \begin{equation}\label{eq:tv_sampler}
 d_{\sTV}\big(\mu_{\Lambda_n},\mu_{\Lambda_n}^{\salg}\big) \le \eps \, .
\end{equation}
\end{Thm}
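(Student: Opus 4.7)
The plan is to build the sampler by growing the sampling domain one vertex at a time along an ordering chosen so that every prefix lies in $\mathcal{Q}_n$, interleaving each extension with a polynomial-time run of Glauber dynamics whose algebraic convergence is supplied by Theorem~\ref{thm:algebraic_decay}. Fix an origin $o\in\Lambda_n$ and enumerate the vertices $v_1,\ldots,v_N$ of $\Lambda_n$ in nondecreasing $\ell_\infty$-distance from $o$ (breaking ties arbitrarily). Setting $\Lambda^{(i)}:=\{v_1,\ldots,v_i\}$, every $\Lambda^{(i)}$ is of the form $B_r(o)\cup(\partial B_r(o)\cap A)$ for some $r$ and $A$, hence $\Lambda^{(i)}\in\mathcal Q_n$; by hypothesis, the $\RFIM$ on $\Lambda^{(i)}$ satisfies $\WSM(C)$, and Theorem~\ref{thm:algebraic_decay} applies on the $\P_h$-probability $1-\delta/2$ event $\{\max_v|h_v|\le h^\star(\delta)\}$, which holds by tightness of $h$ and a union bound.

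The algorithm maintains a random configuration $\sigma^{(i)}\in\{-1,+1\}^{\Lambda^{(i)}}$ with law $\nu_i$, initialized at $i=1$ by an exact draw $\sigma^{(1)}\sim\mu_{\Lambda^{(1)}}$ (a single spin, trivial). To pass from $\sigma^{(i)}$ to $\sigma^{(i+1)}$: (a) extend $\sigma^{(i)}$ to $\Lambda^{(i+1)}$ by sampling $\sigma_{v_{i+1}}$ exactly from the one-site conditional $\mu_{\Lambda^{(i+1)}}(\,\cdot\mid\sigma_{\Lambda^{(i)}}=\sigma^{(i)})$, which is a biased Bernoulli whose parameter depends only on $h_{v_{i+1}}$ and the spins at the neighbors of $v_{i+1}$ in $\Lambda^{(i)}$; (b) run the Glauber dynamics for $\mu_{\Lambda^{(i+1)}}$ from the resulting configuration for a time $T$ polynomial in $n,1/\varepsilon,1/\delta$. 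Writing $f_i:=d\nu_i/d\mu_{\Lambda^{(i)}}$, step (a) yields on $\Lambda^{(i+1)}$ a distribution with density $f_iR_i$ with respect to $\mu_{\Lambda^{(i+1)}}$, where $R_i$ is the ratio of $\mu_{\Lambda^{(i)}}$ to the $\Lambda^{(i)}$-marginal of $\mu_{\Lambda^{(i+1)}}$. A direct computation writes $R_i(\sigma)$ as a normalized reciprocal of $\cosh\bigl(\beta\sum_{u\sim v_{i+1}}\sigma_u+h_{v_{i+1}}\bigr)$, so on the event controlling $|h_v|$ one has $e^{-C_2}\le R_i\le e^{C_2}$ pointwise, with $C_2=C_2(\beta,d,h^\star)$. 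Theorem~\ref{thm:algebraic_decay} applied on $\Lambda^{(i+1)}\in\mathcal Q_n$ to the test function $f_iR_i$ then yields
\begin{equation*}
\chi^2(\nu_{i+1},\mu_{\Lambda^{(i+1)}})\;=\;\var_{\mu_{\Lambda^{(i+1)}}}\!\bigl(P_T(f_iR_i)\bigr)\;\le\;\frac{Kn^\kappa\,\osc(f_iR_i)}{\delta\,T^\alpha}\,,
\end{equation*}
which by Cauchy--Schwarz upgrades to a bound on $d_{\sTV}(\nu_{i+1},\mu_{\Lambda^{(i+1)}})$; telescoping through $i\le N$ is designed to deliver total TV error $\le\varepsilon$.

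The main obstacle is controlling $\osc(f_iR_i)\le\|f_i\|_\infty^2\,e^{2C_2}$ uniformly in $i$. Since a Markov semigroup contracts but does not shrink the $L^\infty$ norm, the crude induction gives $\|f_{i+1}\|_\infty\le e^{C_2}\|f_i\|_\infty$, so after $N$ iterations the bound on the warm-start constant would be $e^{NC_2}=e^{\Omega(n^d)}$, which is incompatible with polynomial-time mixing. The key additional ingredient is therefore a \emph{reset} of the $L^\infty$-warmness after each Glauber phase: after running the dynamics long enough that the $\chi^2$-distance to $\mu_{\Lambda^{(i+1)}}$ is $o(1/N)$, Markov's inequality localizes the mass of $\nu_{i+1}$ outside a sublevel set $\{d\nu_{i+1}/d\mu_{\Lambda^{(i+1)}}\le M\}$ for a threshold $M=\poly(n,N/\varepsilon)$; discarding (or rejecting) configurations that fall outside this set costs only $\poly(n)\cdot M\cdot\chi^2(\nu_{i+1},\mu_{\Lambda^{(i+1)}})^{1/2}$ in total variation per step, while restoring $\|f_{i+1}\|_\infty\le M$ for the next iteration. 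Tuning $M$ and $T$ as polynomials in $n,1/\varepsilon,1/\delta$ to absorb both the $\chi^2$-driven mixing error and the truncation cost, and summing via the triangle inequality across all $N$ steps, yields $d_{\sTV}(\nu_N,\mu_{\Lambda_n})\le\varepsilon$ with total running time $n^{O(1)}$ on the high-probability event, completing the proof.
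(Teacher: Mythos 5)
Your architecture is the same as the paper's: grow the domain one vertex at a time through $\mathcal{Q}_n$, extend the current sample by one spin, and run Glauber dynamics for polynomial time on each $\Lambda^{(i)}$, using the convergence supplied by $\WSM$. You also correctly identify the central danger, namely that the $L^\infty$-warmness of the running distribution could compound over the $N$ iterations. The problem is your fix. The ``reset'' step---``discarding (or rejecting) configurations that fall outside'' the sublevel set $\{d\nu_{i+1}/d\mu_{\Lambda^{(i+1)}}\le M\}$---is not an implementable operation: the density of the law of the Glauber chain after time $T$, evaluated at the realized configuration, is not something the algorithm can compute, so it cannot decide whether to reject. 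If you instead intend the truncation purely as an analysis device (replace $\nu_{i+1}$ by its conditioning on the sublevel set, pay the truncated mass in total variation, and run the next step's analysis from the truncated auxiliary measure while coupling it to the true chain), the argument can be repaired, but then the induction must carefully separate the actual chain from the auxiliary one, and your writeup does not do this: as stated, ``restoring $\|f_{i+1}\|_\infty\le M$'' is false for the algorithm's actual output law. Two secondary inaccuracies: the $\P_h$-probability $1-\delta$ event in Theorem~\ref{thm:algebraic_decay} is the event that the weak Poincar\'e inequality holds (coming from the covariance bound of Theorem~\ref{thm:trace}), not $\{\max_v|h_v|\le h^\star\}$; and you need the conclusion simultaneously for all $N$ domains $\Lambda^{(i)}$, which requires invoking the theorem with failure probability $\delta/N$ and a union bound.

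The paper avoids the blowup, and any need for truncation, by never tracking the density of the algorithm's output at all. At step $i$ it couples the true chain (initialized from $\nu_{i-1}$ extended by an independent spin drawn $\propto e^{\pm h_{v_i}}$) with an auxiliary chain initialized from the exact product $\mu_{\Lambda^{(i-1)}}\otimes\mu_{v_i}$, paying $d_{\sTV}(\nu_{i-1},\mu_{\Lambda^{(i-1)}})$ once via the optimal coupling of the initializations. The product measure has Radon--Nikodym derivative at most $e^{4d\beta}$ with respect to $\mu_{\Lambda^{(i)}}$---a constant independent of $i$, of $h$, and of the history, since only the at most $2d$ interaction terms at $v_i$ are unaccounted for (your $R_i$ is likewise bounded by $e^{4d\beta}$ uniformly in $h$, so the field-boundedness event is unnecessary). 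The warm-start mixing bound (Lemma~\ref{lem:warm-start-mixing}, via Lov\'asz--Simonovits and the large-set expansion consequence of the weak PI) then gives error $\eps/N$ for the auxiliary chain, and the total-variation errors simply telescope to $\eps$. Adopting this bookkeeping closes your gap without the truncation machinery; your use of the $\chi^2$/variance-decay bound on the density in place of Lemma~\ref{lem:warm-start-mixing} is a legitimate alternative once the warm start is measured from the exact product measure rather than from the algorithm's own output distribution.
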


Our sampling algorithm can attain any polynomially small total variation distance to equilibrium in polynomial time. The algorithm proceeds iteratively by running Glauber dynamics 
for polynomially many steps on the $\RFIM$ measure on a domain $V_i$ with free boundary condition, where $V_1=\{o\} \subset \cdots \subset V_N = \In$ is an increasing sequence of domains such that $V_{i+1}$ is obtained from $V_i$ by adding a single vertex $v_{i+1}$ to its boundary. The chain at step $i+1$ on $V_{i+1}$ is initialized from the output of the chain at step $i$ on $V_i$ concatenated with the spin $\sigma_{v_{i+1}}$ sampled independently.    
We prove that at each step $i$, this initialization is a \emph{warm start} with respect to the $\RFIM$ measure on $V_i$, $\mu_{V_{i}}$, i.e., the initialization has bounded Radon-Nikodym derivative w.r.t.\ $\mu_{V_{i}}$. We then use the tools of Lovasz and Simonovits~\cite{lovasz1993random} to show that large set expansion implies the chain mixes in polynomial time. The definition of the collection $\mathcal{Q}_n$ is made exactly to accommodate such a construction: we will have $V_i \in \mathcal{Q}_n$ for all $1\le i\le N$, so that $\WSM$ holds on each $V_i$, and large-set expansion follows from Theorem~\ref{thm:algebraic_decay}. 

  %

\paragraph{Strong spatial mixing.} We now work under \emph{strong spatial mixing} ($\SSM$) in expectation, whereby the influence on the root spin 
decays exponentially in the distance from the vertices \emph{on which the two boundary conditions disagree}, i.e.,  even in the presence of arbitrarily close \emph{agreeing} pinnings. Assuming $\SSM$ in expectation, we can show the stronger result that the continuous-time Glauber dynamics on the $\RFIM$ on $\Lambda_n$ mixes in $N^{o(1)}$ time, or equivalently mixing happens after $N^{1+o(1)}$ spin flips for the discrete chain. 

\begin{Def}\label{assump:SSM-RFIM0}
  For a constant $C>0$, we say that the $\RFIM$ on $\Lambda_n$ satisfies $\SSM(C)$ if for every $r \ge 1$, every box $B\in \{ w+ B_{r}(o): w\in \{-r+1,...,r-1\}^d\}$ (this is all possible cubes of radius $r$ having the origin $o$ in their interior), and every $z\in \partial B$, 
    \begin{align*}
       \EE_h\Big[\max_{\xi \in \{-1,+1\}^{\partial B \setminus \{z\}}} d_{\sTV}\big(\mu_{B}^{\xi^+}(\sigma_o \in\, \cdot \,) \, ,\, \mu_{B}^{\xi^-}(\sigma_o \in\, \cdot \,)\big)\Big] \le Ce^{ - d(o,z)/C}\,,
    \end{align*}
    where $\xi^+, \xi^- \in \{-1,+1\}^{\partial B}$ are the boundary conditions which agree with $\xi$ on $\partial B \setminus \{z\}$ and take the values $+$ and $-$ on $z$ respectively.
\end{Def}

Let $\mE_{\mu_{\Lambda}}$ be the Dirichlet form of Glauber dynamics: for a test function $\varphi: \{-1,+1\}^{\Lambda} \rightarrow \real$,
\begin{equation} \label{eq:dirichlet0}
\mE_{\mu_{\Lambda}} (\varphi, \varphi) := \frac{1}{2} \sum_{\sigma \sim \sigma'}\frac{ \mu_{\Lambda} (\sigma)\mu_{\Lambda} (\sigma')}{\mu_{\Lambda} (\sigma)+\mu_{\Lambda} (\sigma')} \left(\varphi(\sigma) - \varphi(\sigma') \right)^2  \, ,
\end{equation}
 and where the sum is over pairs $\sigma,\sigma' \in \{-1,+1\}^{\Lambda}$ which differ in only one coordinate, and define the spectral gap of the chain as

 \begin{equation} \label{eq:spectralgap}
\gap_{\Lambda} = \inf_{\varphi} \frac{ \mE_{\mu_{\Lambda}} (\varphi, \varphi) }{\var_{\mu_{\Lambda}}(\varphi)} \, ,
\end{equation}
where the infimum is taken over test functions $\varphi : \{-1,+1\}^{\Lambda} \to \real$ of non-zero variance.

Our main result under $\SSM$ is the following. 

\begin{Thm}\label{mainthm:SSM}
    Suppose the $\RFIM$ on $\Lambda_n$ satisfies $\SSM(C)$ for some $C>0$. Then there exists $\kappa = \kappa(d,\beta,C)>0$ such that for every $L \ge \kappa \log n$,
    \begin{equation*}
    \gap_{\In} \ge \exp\big\{-L^{\frac{d-1}{d} + o(1)}\big\} \, ,
    \end{equation*}
    with $\P_{h}$-probability at least $1-e^{-L}$.
In particular, by taking $L = \kappa \log n$, the mixing time of continuous-time Glauber dynamics on $\Lambda_n$ is at most $n^{o(1)}$ with $\P_{h}$-probability at least $1-n^{-\kappa}$. 
\end{Thm}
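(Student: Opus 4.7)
My approach is the two-ingredient strategy advertised in the abstract: a field-dependent coarse graining that isolates the weak-field sites, together with a stochastic localization argument that upgrades the expectation-level $\SSM$ assumption into a deterministic spatial-mixing input on the strong-field complement. Call a site $u \in \In$ \emph{bad} if $|h_u| \le \eta$, with threshold $\eta=\eta(\beta,d,C)$ chosen so small that $\{u \in \In:\text{bad}\}$ is a Bernoulli site percolation well below its critical density (possible by tightness of $h_u$). Let $\cB$ denote the union of all connected bad clusters in $\In$. Standard subcritical cluster-size tails give $\P_h(\exists \text{ bad cluster of size} \ge s) \le |\In|\, e^{-cs}$, so on an event $\mathcal{G}$ of probability at least $1-e^{-L}$ (valid once $L \ge \kappa \log n$ for $\kappa=\kappa(\beta,d,C)$ large), every connected component of $\cB$ contains at most $L$ vertices.

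On $\mathcal{G}$, I would split $\In = \cB \sqcup \cB^c$ and apply a two-block comparison of the form
\begin{equation*}
\gap_{\In} \;\ge\; \gamma_{\mathrm{block}} \cdot \min\bigl(\gamma_{\cB^c},\,\gamma_{\cB}\bigr)\, ,
\end{equation*}
where $\gamma_{\mathrm{block}}$ is the gap of the two-block dynamics that resamples each block from its conditional law and $\gamma_{\cB^c},\gamma_{\cB}$ are the worst-case (over the other block's configuration) conditional Glauber gaps inside each block. On $\cB^c$ the field obeys $|h_u|>\eta$ uniformly, and the localization step below promotes $\SSM(C)$ to a deterministic $\SSM$ property on this region; the classical Martinelli--Olivieri equivalence then yields $\gamma_{\cB^c}\ge c_0>0$ and similarly $\gamma_{\mathrm{block}}\ge c_0$. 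Under the block dynamics, the distinct components of $\cB$ evolve independently; on each component $B$ of size at most $L$, the canonical-paths bound for the low-temperature Ising model (which is field-independent and uniform in the boundary condition) gives $\gap_{B}\ge \exp(-c|\partial B|)$, and the isoperimetric inequality in $\mathbb Z^d$ upgrades this to $|\partial B| \lesssim |B|^{(d-1)/d} \le L^{(d-1)/d}$, so that $\gamma_{\cB}\ge \exp(-c L^{(d-1)/d})$. Multiplying the estimates gives $\gap_{\In}\ge \exp(-L^{(d-1)/d+o(1)})$ on $\mathcal{G}$.

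The main obstacle is that $\SSM(C)$ is only assumed \emph{in expectation}, whereas the block decomposition above asks for a deterministic spatial-mixing statement on $\cB^c$ uniformly in the boundary data from $\cB$. I would resolve this via the stochastic localization scheme of the paper: introduce a Brownian tilt $(h_t)_{t\ge 0}$ on the field coordinates with $\E[\mu_{h_t}]=\mu_h$, whose net effect is to inflate the effective field variance linearly in $t$. Running the localization up to a carefully tuned time $t^\ast$ makes the tilted field dominate the weak-field fluctuations on $\cB^c$, so that the expectation-level $\SSM$ bound lifts to a quantitative deterministic one with high probability along the path; a Dirichlet-form identity relating $\var_{\mu_h}$ to a path-integrated functional of the tilted dynamics then transfers the resulting spectral gap back to $\gap_{\In}$, paying only a multiplicative $\exp(L^{o(1)})$ cost that gets absorbed into the final exponent. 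The balance between $t^\ast$, the coarse-graining scale, and the transfer loss is the quantitatively delicate piece; taking $L=\kappa\log n$ and converting from the $L^2$ bound to total variation then delivers the stated $n^{o(1)}$ mixing time with $\P_h$-probability $1-n^{-\kappa}$.
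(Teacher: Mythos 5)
Your high-level instinct (subcritical percolation of weak-field regions, cut-width/isoperimetry bound $\exp(-c|B|^{(d-1)/d})$ on the bad clusters, block comparison) matches the paper's strategy, but there are two genuine gaps that would sink the argument as written. First, defining a site as bad when $|h_u|\le\eta$ for a \emph{small} threshold $\eta$ gives you nothing on the complement: at low temperature a field of magnitude $\eta$ does not shield a site from its neighbors, so $\cB^c$ has no usable spatial-mixing property and the Martinelli--Olivieri machinery cannot be invoked there. The relevant threshold in the large-field regime must satisfy $K\gtrsim 2d\beta$ (so the field dominates the interaction), and under the bare $\SSM(C)$ hypothesis no pointwise field threshold works at all. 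The paper instead coarse-grains $\In$ into boxes of radius $R\approx\log L$ and declares a box \emph{good} exactly when the quantitative correlation-decay estimate of Definition~\ref{assump:SSM-RFIM0} holds deterministically inside it at all scales $(\log R)^2\le\ell\le R/8$; Markov's inequality applied to the expectation-level $\SSM$ bound then makes bad boxes rare, and a $2$-dependent-to-independent domination argument makes them a subcritical percolation on the coarse lattice. This definition of goodness directly by the correlation-decay event, rather than by the field magnitude, is the missing idea. Relatedly, your disjoint two-block split with $\gamma_{\mathrm{block}}\ge c_0$ asserted is not justified; the paper needs a large family of overlapping, translated blocks with $R/2$-buffers around the bad clusters, so that a single boundary-spin flip propagates only $O((\log R)^2)$ deep into a block (this is where the deterministic good-block property is used), followed by path coupling for the block dynamics.

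Second, the stochastic localization step you propose to ``promote'' $\SSM$ in expectation to a deterministic statement is both unnecessary for this theorem and cannot deliver its conclusion. Transferring a spectral gap for the tilted measure $\nu_T$ back to $\nu_0$ goes through the approximate variance conservation bound, and because $\|\cov(\nu_t)\|_{\op}$ is not almost surely bounded (it only has exponential tails), that transfer inherently costs a power of $\osc(\varphi)$: it yields only a \emph{weak} Poincar\'e inequality of the form $\var_{\mu}(\varphi)\le A\,\mE_\mu(\varphi,\varphi)^{1/p}\osc(\varphi)^{1/q}$. That is precisely why the paper's $\WSM$ result is stated as a weak PI with algebraic relaxation, while Theorem~\ref{mainthm:SSM} claims a genuine spectral gap and is proved by an entirely SL-free route. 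Your proposed ``multiplicative $\exp(L^{o(1)})$ transfer cost'' is not something the variance-conservation machinery provides.
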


It is not difficult to show that for every $\beta>0$, $\SSM$ (and hence $\WSM$) holds if the external field magnitudes $|h|$ are sufficiently anti-concentrated: see Lemma~\ref{lem:ssm_large_fields}. 
Conversely, one may wonder what should be the optimal mixing time for the $\RFIM$, and in particular if there is a matching upper bound on the spectral gap to Theorem~\ref{mainthm:SSM}. 
 General arguments in~\cite{Hayes-Sinclair} give a `coupon collecting' lower bound on the mixing time of $\Omega(\log n)$. In Proposition~\ref{prop:mixing-lower-bound}, we show a super-polylogarithmic lower bound on the mixing time for the $\RFIM$ (upper bound on the spectral gap) for $\beta$ sufficiently large, provided the field variances are $O(1)$, confirming the expected slowdown in the Griffiths phase. The proof goes by embedding slow mixing with no external field in a sub-box of side-length $(\log n)^{1/d}$  in $\Lambda_n$ which by chance has very small values of $h$. To be precise, in this Griffiths regime the $L^{(d-1)/d}$ dependence in Theorem~\ref{mainthm:SSM} should be optimal.

\subsection{Overview of the proof}
Our proof is broadly based on a combination of ideas and techniques used to analyze dynamics of lattice models (mainly coarse graining and block dynamics) with the more recently developed techniques for studying mixing times for general spin systems (specifically, stochastic localization). Our sampling algorithm is an adaptation of ideas from algorithmic convex geometry. We describe these components in what follows.

\subsubsection{An easy quasipolynomial bound}\label{sec:easy-quasipolynomial-bound}
 As a warm-up, and to introduce the difficulties inherent to obtaining polynomial convergence in every dimension, let us provide a proof of $e^{O((\log n)^{d-1})}$ mixing on $\In$ under $\WSM$, Def.~\ref{ass:general-WSM}. (This already provides polynomial mixing time for $d=2$.) The crux of this bound is to tile the domain with boxes of side-length $C_0 \log n$ for some large constant $C_0$, at which scale every box exhibits decay of correlation with high probability, and then union bound over the probability of not coupling in all of these boxes after their largest mixing time. 

Let us assume $\WSM(C)$, $C>0$. Using Markov's inequality and a union bound over the vertices in $\Lambda_n$, there exists $C'>0$ such that with probability $1-o_n(1)$, for all  $r\ge C' \log n$, 
\begin{align}\label{eq:quenched-spatial-mixing-assumption}
    \max_{v\in \Lambda_n} ~ d_{\sTV}\Big(\mu_{B_r(v)}^+(\sigma_v \in \cdot) \, , \,  \mu_{B_r(v)}^-(\sigma_v \in \cdot) \Big) \le C' e^{ - r/C'}\,.
\end{align}
Let $r_0 = C_0 \log n$ and let $B_v = B_{r_0}(v)$ for each $v \in \In$. Fix a vertex $v$ and let $(Y_{t,v}^+),\, (Y_{t,v}^-)$ be two chains on the state space $\{-1,+1\}^{B_v}$ following Glauber dynamics transitions, initialized from the all `$+$' and all `$-$' configurations respectively, and ignoring all updates outside $B_v$. Then  
\begin{align}\label{eq:ub_coupling}
    \max_{\sigma_0,\sigma_0'} d_{\sTV}\left(\P(X_t^{\sigma_0}\in \cdot) , \P(X_t^{\sigma_0'}\in \cdot)\right) 
    &\le \sum_{v \in \In} \mathbf{P}\left(Y_{t,v}^+(\sigma_v) \ne Y_{t,v}^-(\sigma_v)\right) \, .
\end{align}
The above inequality is obtained by a union bound over the vertices, and monotonicity of the disagreement probability in the distance from the boundary under the grand coupling $\mathbf{P}$. The latter is the coupling using the same Poisson clock to update both chains, and when a vertex $w \in B_v$ needs updating, $Y_{t,v}^+(w)$ and $Y_{t,v}^-(w)$ are sampled using the same uniform random variable. (See the preliminaries in Section~\ref{sec:preliminaries}.)

By a triangle inequality, each summand on the right-hand side of~\eqref{eq:ub_coupling} is bounded by 
\begin{align}\label{eq:ubd}
   2 \max_{\iota\in \pm} ~d_{\sTV}\left(\P(Y_{t,v}^{\iota} \in \cdot )\,,\, \mu_{B_v}^{\iota}\right) + d_{\sTV}\big(\mu_{B_v}^{+}(\sigma_v \in \cdot) \,,\, \mu_{B_v}^{-}(\sigma_v \in \cdot)\big)\, .
\end{align}
By submultiplicativity of the total variation distance, the first quantity is at most $o(n^{-d})$ for 
\begin{equation*}
t\ge T_0:= (C_d \log n) \max_{v} \big\{ t\mix^+(B_v)\vee t\mix^-(B_v)\big\}\,,
\end{equation*}
where $t\mix^{\iota}(B_v)$ is the mixing time of $(Y_{t,v}^{\iota})_{t \ge 0}$, $\iota = \pm$ (formally defined in Section~\ref{sec:preliminaries}). 
With high probability, the second quantity in~\eqref{eq:ubd} is also $o(n^{-d})$ for all $v$ by~\eqref{eq:quenched-spatial-mixing-assumption} if $C_0$ is a sufficiently large constant. The proof is complete by noting that $T_0 \le e^{O((\log n)^{d-1})}$ by the fact that the cut-width (see for instance Definition 1.4 of~\cite{BKMP}) of $B_v$ is at most $r_0^{d-1} = O((\log n)^{d-1})$ and that the mixing time is no more than exponential in the cut-width. This can be shown using the canonical path method of~\cite{JS}; see Proposition 1.1 in~\cite{BKMP} for further details. 

The weakness of this approach, most apparent for $d \ge 3$, is that the boxes in our tiling have volume $(\log n)^d$ as necessitated by~\eqref{eq:quenched-spatial-mixing-assumption} and the union bound over all vertices of $\In$. 
When the external fields are strong enough, we may expect to do better by adapting the tiling to the realization of the fields. The regions of slow-down  of the chain are regions of weak fields, which by percolation arguments are of volume $\log n$ instead of $(\log n)^d$. 
To implement this, when the external fields are only marginally strong enough for decay of correlations (e.g., the minimal assumption of $\WSM(C)$ in expectation), we artificially increase the external field strength using a \emph{stochastic localization process}. We explain both of these steps in the next three subsections.  

\subsubsection{Weak Poincar\'e under WSM and reduction to strong fields}

We will show that under $\WSM$, the $\RFIM$ on $\In$ satisfies a \emph{weak} Poincar\'e inequality of the form 
\begin{equation}\label{eq:weak_PI1}
     \var_{\mu_{\In}} (\varphi)  \leq K N^{\kappa}\, \mE_{\mu_{\In}}(\varphi,\varphi)^{1/p}\, \osc(\varphi)^{2/q}\, ,
\end{equation}
for all test functions $\varphi$, with high probability over the randomness of the external field $h$, for some $K, \kappa>0$ and $p,q \ge 1$ with $1/p+1/q=1$ which do not depend on $N$; see Theorem~\ref{thm:weak_PI} for the precise statement. We will deduce Theorem~\ref{thm:algebraic_decay} as a consequence of this inequality.
 
 We employ the stochastic localization ($\SL$) technique of~\cite{eldan2013thin} to prove~\eqref{eq:weak_PI1}; we provide a detailed overview of this technique and use for bounding Markov chain mixing times in Section~\ref{sec:preliminaries}.    
We consider the $\SL$ process $(\nu_t)_{t \ge 0}$ with $\nu_0 = \mu_{\In}$. In our context, this process is characterized as follows: letting $\langle \cdot,\cdot \rangle$ be the usual inner product in $\mathbb R^N$, we may define
\begin{equation*}
\nu_t(\sigma) \propto \exp \big\{- H(\sigma) - \langle y_t , \sigma\rangle \big\} \ , 
\end{equation*}
for a stochastic process $y_t$ of the form $y_t = B_t + v_t$ where $B_t$ is a standard Brownian motion in $\R^N$ and $v_t$ is a predictable process (drift). Moreover, the process $v_t$ is the unique process for which $\nu_t(A)$ is a martingale for every subset $A$ of the configuration space.

In other words, the measure $\nu_t$ still has the form of a certain $\RFIM$ on $\Lambda_n$, but the external fields follow a different distribution. We will see that, roughly speaking, the strength of the external fields \emph{increases} with time. For a sufficiently large but constant (in $N$) time $T$ we aim to show two key properties, to be sketched in further detail in the subsections that follow: 
\begin{itemize} 
\item The variance of any test function $\varphi$ under $\nu_T$ is `not much smaller' than its variance under $\nu_0$, with high probability. This statement takes the form (see Theorem~\ref{thm:approx_var_bound}),
\begin{equation}\label{eq:weak_nash0}
     \var_{\nu_0} (\varphi)  \leq KN^{1/q}\, \E\big[\var_{\nu_T}(\varphi) \,|\, h \big]^{1/p}\, \osc(\varphi)^{2/q}\, ,
\end{equation}
for some $p, q$ such that $1/p+1/q=1$, with high $\P_h$-probability. (The expectation in the above display is with respect to the $\SL$ process.)  
A stronger bound where $p=1$, $q=+\infty$ has been referred to as an \emph{approximate variance conservation bound} and has been used to show spectral gaps for Ising measures whose interaction matrix has a spectral diameter bounded by 1~\cite{EKZ,chen2022localization}. In our case we are only able to prove the weak version~\eqref{eq:weak_nash0}; this is due to the fact that the covariance matrix of $\nu_T$ is not almost-surely bounded, but is of typical size $O(\log N)$ with an exponential upper tail; see Theorem~\ref{thm:trace}. 
\item The resulting random measure $\nu_T$ has a spectral gap with high probability, in the usual sense; see Theorem~\ref{thm:fast-mixing-large-variance}. This crucially relies on the fact that the coordinates of the new external field $h+y_T$ have independent magnitudes (though not signs) and can be made sufficiently anti-concentrated by taking $T$ to be a large constant. Thus the regions of weak fields form a subcritical percolation process in $\In$. We construct a (field-dependent) block-dynamics for $\nu_T$ which exploits this property, and this allows us to prove a spectral gap. See Section~\ref{sec:large_variance} for the construction.   
\end{itemize}
Then the weak PI~\eqref{eq:weak_PI1} for $\mu_{\In}$ follows by combining Eq.~\eqref{eq:weak_nash0} with a spectral gap for $\nu_T$, together with the observation that the Dirichlet form process $t \mapsto \mE_{\nu_{t}}(\varphi,\varphi)$
is a super-martingale (see~\cite[Lemma 9]{EKZ}). The statement and the full proof of the weak PI can be found in Section~\ref{sec:weak_PI}, and the proof of Theorem~\ref{thm:algebraic_decay} can be found in Section~\ref{sec:proof_algebraic_decay}. 

\subsubsection{Approximate variance conservation bound}
Towards obtaining \eqref{eq:weak_nash0}, we follow the usual paradigm of the $\SL$ technique which suggests that the rate of variance decay along the $\SL$ process can be bounded by the operator norm of the respective covariance matrices; this is explained below in Section~\ref{sec:preliminaries}. In particular, the decay rate of $\var_{\nu_t}(\varphi)$  
can be controlled via the quantity $\| \cov(\nu_t) \|_{\op}$ where $\|\cdot\|_{\op}$ denotes the operator norm, which is in turn bounded by $\tr(\cov(\nu_t)^p)^{1/p}$ with $p \sim \log n$,  where $\tr$ denotes the trace. 

Controlling the covariance matrix of $\nu_t$ is the heart of the argument, and uses several properties of the $\RFIM$. Defining $(a_{i,j})_{i,j}$ as the entries of $\cov(\nu_t)$, the natural first step is to write
\begin{equation}\label{eq:tracecov}
\tr \big(\cov(\nu_t)^p\big) = \sum_{u_1,...u_p \in \Lambda_n} a_{u_1, u_2} \cdots a_{u_{p-1}, u_p} a_{u_{p}, u_1}\, .
\end{equation}
Now we can control the influence of $u_{i+1}$ on $u_i$ by introducing a boundary at distance $r \le d_{\infty}(u_{i},u_{i+1})$ from $u_i$. A simple argument based on the $\FKG$ inequality, see Lemma~\ref{lem:cov_inf}, yields 
\begin{equation}\label{eq:TV-influence-bd}
a_{u_i,u_{i+1}} \leq  d_{\sTV} \big( \nu_t(\sigma_{u_i} \in \,\cdot\, |\, \sigma_{\partial B_r(u_i)}=+)\, ,\, \nu_t(\sigma_{u_i} \in \,\cdot\,|\, \sigma_{\partial B_r(u_i)}=-) \big)\, .
\end{equation}
The $\WSM$ condition, Definition~\ref{ass:general-WSM}, then gives an upper bound on the right-hand side in the case $t=0$. 
 Obtaining a similar bound for general $t$ is more involved. It relies on two main components:
 \begin{itemize}
 \item
 A calculation that shows that the total-variation distance in~\eqref{eq:TV-influence-bd} is in some sense a super-martingale with respect to $t$. This calculation is somewhat similar to that showing the Dirichlet form is a super-martingale~\cite{EKZ}, and crucially uses the $\FKG$ correlation inequality. 
 \item 
 A representation theorem obtained in \cite{AhmedMontanari} shows that the measure $\nu_t$ can be written in a form that implies that the total variations at different points are essentially independent under $\WSM$. This allows to control the expectation of the product in~\eqref{eq:tracecov} under the randomness of the $\SL$ process. 
\end{itemize} 
 
 Finally, a combinatorial counting argument is used in conjunction with the above estimates to obtain a bound for $\tr (\cov(\nu_t)^p)$.

\subsubsection{Coarse-graining and disorder-dependent block dynamics}
After use of the stochastic localization, we need to show a high probability bound on the spectral gap of the Glauber dynamics for the $\RFIM$ when the external fields are sufficiently anti-concentrated. We show that under such an assumption, the inverse spectral gap is at most $N^{o(1)}$. This is actually proved in conjunction with Theorem~\ref{mainthm:SSM} as they both use identical strategies based on combining a field-dependent \emph{coarse-graining} with a block dynamics where the blocks are non-homogenous and determined by the realization of the field.

Our aim is to localize the dynamics to blocks of diameter $C \log n$, inside of which the large typical size of the field shields the center of the ball from the influence of the boundary. This would reduce the mixing time on $\Lambda_n$ to the maximum mixing time on a block. However, as indicated by the short proof in Subsection~\ref{sec:easy-quasipolynomial-bound}, if these regions are chosen as cubes, the worst-case mixing time will be super-polynomial in $n$ in $d\ge 3$.  

Our proof leverages the fact that the regions where the external field is not large are a sparse subset of $\Lambda_n$, i.e., they behave like a subcritical percolation and their largest connected components have $O(\log n)$ volume. We construct  localizing blocks depending on the realization of $h$, in such a way that their largest volume is $\tilde O(\log n)$, their boundary is sufficiently shielded from their bulk by large fields, and the blocks' worst mixing time is sub-polynomial. The crux of this construction is twofold: 
\begin{itemize}
    \item We coarse-grain the lattice into good and bad blocks of radius $\log \log n$, where bad blocks are those dominated by small field values (in a quantifiable way). The coarse-graining enables a construction which isolates the ``bad'' portions of $\Lambda_n$ while building up enough buffer around them that different boundary conditions on a block do not influence its bulk. The largest blocks in this construction will have volume $O(\log n \log \log n)$. 
    \item The amenability of and isoperimetry in $\mathbb Z^d$ ensure that the mixing time of Glauber dynamics on an Ising model on a domain $B$ with volume $|B|$ is at most $\exp(O(|B|^{(d-1)/d}))$; see Lemma~\ref{lem:surface-mixing-time-bound}. This then gives a sub-polynomial worst-case mixing time over the blocks.
\end{itemize}
We note that similar coarse-graining ideas were used to homogenize quenched randomness in~\cite{CMM-Disordered-magnets} for the random ferromagnet (random couplings rather than fields).

\subsection*{Acknowledgements}
The authors thank anonymous referees for their careful reading and useful comments.
The research of R.G. was supported in part by NSF DMS-2246780. A.P. was supported by the Israeli Council for Higher Education (CHE) via the Weizmann Data Science Research Center, and by a research grant from the Estate of Harry Schutzman Science Research Fellowship.

\section{Preliminaries}
\label{sec:preliminaries}
\paragraph{Markov chains} We first recall some basics of Markov chain mixing times used in our context. The continuous time chain $(X_t)$ defined in~\eqref{eq:glauber} is reversible, irreducible and aperiodic with stationary measure $\mu_{G}$.  We define its $\varepsilon$-mixing time
    \begin{equation*}
        \tau\mix^\cont(\varepsilon):=\inf \left\{ t\geq 0~: \max_{\sigma\in \{-1,+1\}^{V}} d_{\sTV}\big(\P(X_t \in \cdot \,|\, X_0 =\sigma)\,,\,\mu_G\big)\leq \varepsilon \right\} \, ,
    \end{equation*}
where $d_{\sTV}(\cdot,\cdot)$ is the total variation distance between probability measures. When we omit to refer to the parameter $\eps$ in the above definition, it is implicitly understood that $\eps$ is a constant, e.g., $\eps = 1/4$. 
 We sometimes abuse notation and write $d_{\sTV}(X,Y)$ or $d_{\sTV}(X,\nu)$ for two random variables $X\sim \mu$ and $Y\sim \nu$ instead of $d_{\sTV}(\mu,\nu)$.  
The $\eps$-mixing time can be estimated in terms of the inverse spectral gap as defined in~\eqref{eq:spectralgap}, see \cite[Theorem 20.6]{LP} :
    \begin{equation*}
        \tau\mix^\cont(\varepsilon)\leq \frac{1}{\gap_{G}}\log\left(\frac{1}{\varepsilon \mu_{\min}} \right)\,,
    \end{equation*}
    where  $\mu_{\min}=\min_{\sigma\in\{-1,+1\}^{V}} \mu_{G}(\sigma)$.

The discrete Glauber dynamics chain chooses a vertex $v \in V$ uniformly at random at every step and resamples its value according to the same conditional distribution~\eqref{eq:glauber}. Letting $P$ denote its transition matrix, its $\varepsilon$-mixing time $\tau\mix(\varepsilon)$ is similarly defined as the smallest integer $k \ge 0$ such that $d_{\sTV}( P ^k(\sigma,\cdot),\mu) \leq \varepsilon$ for all starting points $\sigma \in \{-1,+1\}^V$.    
The two mixing times are related by a factor of $|V|$ up to constants (see, e.g.\ \cite[Theorem 20.3]{LP}):
      \begin{equation*}
       (\varepsilon/2)\,|V| \,\tau\mix^\cont(2\varepsilon)  \le \tau\mix(\varepsilon) \le 2 |V| \,\tau\mix^\cont(\varepsilon/4) \, .
        \end{equation*}

\paragraph{FKG and couplings} Positivity of the $\RFIM$ interactions ($\beta \ge 0$) implies useful correlation inequalities such as the $\FKG$ inequality (see e.g.,~\cite{friedli2017statistical,duminil2017lectures}). In particular, for any subset $A \subset V$ and any non-decreasing function $\varphi : \{-1,+1\}^A \to \real$, the mean of $\varphi$ under $\mu^{\tau}_{A}$ is monotone in the boundary condition: $\mu^{\tau}_{A}(\varphi) \le \mu^{\tau'}_{A}(\varphi)$ if $\tau \le \tau'$ (where this inequality is understood vertexwise). This for instance implies that the covariance under $\mu_{G}$ between any two non-decreasing functions, on the natural partial order over Ising configurations, is non-negative.

Given two boundary conditions $\tau, \tau'$, the measures  $\mu^{\tau}_{A},\, \mu^{\tau'}_{A}$ can be coupled via the \emph{monotone coupling} where we fix an ordering $v_1,\cdots,v_k$ of the vertices in $A$ and draw unit uniform random variables $U_1,\cdots,U_k$ independently. We then construct the coupling $(\sigma^{\tau}, \sigma^{\tau'})$ sequentially where for each $i \ge 1$ we sample $\sigma_{v_i}^{\tau}$ and $\sigma_{v_i}^{\tau'}$ from their respective conditional distributions (given $(\sigma_{v_j}^{\tau})_{ j < i}, \tau$  and $(\sigma_{v_j}^{\tau'})_{ j < i}, \tau'$ respectively) using $U_i$ as  common randomness. It can be verified that this coupling is monotone in the boundary condition: $\sigma^{\tau} \le \sigma^{\tau'}$  if   $\tau \le \tau'$. 

Finally, two chains of (continuous time) Glauber dynamics can be coupled via \emph{the grand coupling} where the same Poisson clock is used to update both chains, and whenever a vertex is to be updated (simultaneously in both chains), the same uniform random variable is used. This coupling is monotone in both initialization and boundary condition.

\paragraph{Stochastic localization}
 We now present some fundamental properties of \emph{stochastic localization} ($\SL$), first introduced in \cite{eldan2013thin}. In words, $\SL$ is  a diffusion $(\nu_t)_{t\ge 0}$ in the space of probability measures on $\{-1,1\}^N$ such that its $t\to\infty$ limit is a delta mass at a random $\sigma^*\sim \nu_0$: i.e., it is randomly picking a $\sigma_* \sim \nu_0$ to localize about. Since its introduction, SL has found many applications for solving central problems in high-dimensional geometry (e.g.,~\cite{Klartag-Lehec,Chen}) and more recently, mixing times of Markov chains and sampling algorithms for Ising models and spin glasses (e.g.,~\cite{EKZ,chen2022localization,EMS}). In what follows, we will give a formal definition, describe some properties and explain how it is typically used for mixing time and spectral gap bounds.

Let $\nu_0$ be a probability distribution on the hypercube $\mathcal{C}_N:=\{-1,+1\}^N$. (In our application, $\nu_0 = \mu_{\Lambda}$ will be the $\RFIM$ measure on a domain $\Lambda$ of interest.)
 Let $(B_t)_{t\geq 0}$ be a standard Brownian motion on $\mathbb{R}^N$ with $B_0=0$. $\SL$ is the stochastic process $(\nu_t(\sigma))_{t \geq 0}$, whose density with respect to $\nu_0$
  \begin{equation*}
  \frac{\dd\nu_t}{\dd \nu_0}(\sigma)=F_t(\sigma) 
 \end{equation*}
 solves the stochastic differential equation 
\begin{equation}\label{eq:SL_SDE}
\begin{cases}
    \dd F_t(\sigma)&=F_t(\sigma)\langle  \sigma-a_t,\dd B_t\rangle \quad\quad  \forall \sigma \in \mathcal{C}_N\,,\\
    F_0(\sigma)&=1 \, ,
\end{cases}
\end{equation}
where $a_t$ is the barycenter of, or mean of a sample from, $\nu_t$: 
\begin{equation}\label{eq:barycenter_def}
    a_t=\int_{\mathcal{C}_N}\sigma\,\, \nu_t(\dd \sigma)\,.
\end{equation}
By \cite[Proposition 9]{RonenHypercube}, the $\SL$ process defined in \eqref{eq:SL_SDE} ensures that almost surely, for all \( t \), \(\nu_t\) is a probability measure as 
\begin{equation*}
    \dd \nu_t (\mathcal{C}_N) = \int_{\mathcal{C}_N} F_t(\sigma) \langle \sigma-a_t, \dd B_t \rangle \dd \nu_t(\sigma) = 0\, .
\end{equation*}
Moreover, for any \( A \subseteq \mathcal{C}_N \), the function \( t \mapsto \nu_t(A) \) is a martingale, as evident from the integral form of \eqref{eq:SL_SDE}. By the same remark, the process $a_t$ is also a martingale and almost surely converges to a point in $a_\infty:= \lim_{t\rightarrow\infty} a_t \in \mathcal{C}_n$, which is distributed according to the law $\nu_0$. The measure $\nu_t$ almost surely weakly converges to a Dirac measure at $a_\infty$.

The use of the $\SL$ process in proving Poincar\'e-type inequalities for $\nu_0$ is described as follows. The aim is to reduce the Poincar\'e inequality for $\nu_0$ to one on $\nu_T$ for a suitable stopping time $T$, which is ideally easier to prove. Let $\varphi:\mathcal{C}_N\rightarrow \mathbb{R}$ be a test function. On the one hand, the Dirichlet form is a supermartingale under the SL process: recalling the definition~\eqref{eq:dirichlet0} of the Dirichlet form, one can verify via It\^o's formula that the process $t \mapsto \nu_t(\sigma)\nu_t(\sigma')/(\nu_t(\sigma)+\nu_t(\sigma'))$ has negative drift, as done in~\cite[Lemma 9]{EKZ}. A heuristic understanding of this is that ``variance-like" quantities tend to shrink under the $\SL$ process.   

On the other hand, if we let $M_t$ be the expectation of $\varphi$ under $\nu_t$, i.e., $M_t = \int_{\mathcal{C}_N} \varphi \dd\nu_t$, then the variance of $\varphi$ satisfies 
\begin{equation*}
    \var_{\nu_0}(\varphi)=\mathbb{E}[M]_t+\mathbb{E}\var_{\nu_t}(\varphi)\,.
\end{equation*}
where $[M]_t$ denotes the quadratic variation. 
If one shows that $\var_{\nu_T}(\varphi)$ is not much smaller than $\var_{\nu_0}(\varphi)$ then a Poincar\'e inequality on $\nu_T$ can be translated to one on $\nu_0$. 
The heart of the stochastic localization technique is to relate the contraction of the variance of $\varphi$ to a certain covariance matrix, namely, the contraction turns out to be governed by the inequality
\begin{equation*}
    \dd [M]_t \leq \var_{\nu_t}(\varphi) \cdot  \| \cov(\nu_t)\|_\op\, .
\end{equation*}
The above follows from an application of It\^o's formula and the Cauchy-Schwartz inequality; the derivation is in Eq.~\eqref{eq:[M]_t} below. Using the above, one gets 
\begin{equation*}
    \dd \var_{\nu_t}(\varphi) \geq -\|\cov(\nu_t)\|_\op \var_{\nu_t}(\varphi)\dd t + \text{martingale} \, .
\end{equation*}
An almost-sure upper bound $\|\cov(\nu_t)\|_\op \le \alpha(t)$ on the covariance would then yield, after integration, the approximate variance conservation bound
\begin{equation*}
    \frac{\EE{\var_{\nu_T}(\varphi)}}{\var_{\nu_0}(\varphi)}\geq  \exp\left(-\int_0^T \alpha(t) \dd t \right)\, .
\end{equation*}
This then yields a Poincaré inequality for $\nu_0$ in terms of the Poincar\'e inequality (with constant $C_P$) for $\nu_T$:
\begin{align*}
    \mathcal{E}_{\nu_0}(\varphi,\varphi) \geq \EE \mathcal{E}_{\nu_T} (\varphi,\varphi) 
    &\geq C_P \EE \var _{\nu_T}(\varphi) \\
    &\geq C_P \var_{\nu_0}(\varphi) \exp\left(-\int_0^T \alpha(t) \dd t \right) \,,  
\end{align*}
where in the first inequality we use the fact that the Dirichlet form is a supermartingale. This is the bound given in, e.g.,~\cite[Theorem 49]{chen2022localization}.

When looking to bound $\cov(\nu_t)$ along the $\SL$ process, and when proving the Poincar\'e inequality on the large $T$ measure $\nu_T$, it ends up being very helpful that the measures $\nu_t$, $0 \le t \le T$, are themselves Ising models with random external fields. 
Namely, the $\SL$ process can be seen as a linear stochastic tilt of $\nu_0$ as follows. 
For an external field $y \in \real^N$, consider the tilted measure
\begin{equation*}
   \mu_{y}(\sigma) \propto e^{\langle y, \sigma\rangle} \nu_{0}(\sigma)  \, .
   \end{equation*}  
 The stochastically localized measure can be written as 
 \begin{equation}\label{eq:tilt} 
     \nu_t(\sigma) =\mu_{y_t} (\sigma)\propto e^{\langle y_t, \sigma\rangle} \nu_0(\sigma)  \, ,
 \end{equation} 
 where we let $y_t$ evolve according to the SDE
\begin{equation}\label{eq:sde_sl} 
  \rmd y_t = a(y_t) \rmd t + \rmd B_t \, , ~~~~ y_0 = 0\, ,
  \end{equation}
with $a(y_t) = a_t$ as in \eqref{eq:barycenter_def}.

One can get further insight into the nature of tilted measures $\nu_t$ via the following Bayesian interpretation of the SL process:
\begin{Prop}(\cite[Theorem 2]{AhmedMontanari} and \cite[Proposition 4.1]{BoazEli})\label{prop:sl_bayes}
    Let $(\bar{B}_t)_{t \geq 0} $ be a standard Brownian motion and sample $\sigma ^\ast \sim \nu_0$ independently from $(\bar{B}_t)_{t \geq 0} $. Then $(y_t)_{t\geq 0}$ defined as in \eqref{eq:sde_sl} has the same distribution of $(\bar{y}_t)_{t\geq 0}$ defined as
\begin{equation} \label{eq:sl}
\bar{y}_t = t \sigma^* + \bar{B}_t \, ,~~~~ t \ge 0 \, ,
\end{equation}
and $(\nu_t)_{t\ge 0}$ is distributed as the process of conditional measures $\big(\nu_0(\sigma^* \in \cdot \, | \, \bar{y}_t)\big)_{t \ge 0}$. 
\end{Prop}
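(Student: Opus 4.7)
My plan is to establish both assertions simultaneously by computing the conditional distribution of $\sigma^*$ given the process $(\bar{y}_s)_{s \le t}$ and then deducing the SDE satisfied by $\bar{y}_t$ via an innovation argument. First, using that $\bar{y}_t \mid \sigma^* \sim N(t\sigma^*, tI)$ by independence of $\bar{B}_t$ and $\sigma^*$, Bayes' rule yields
$$\nu_0(\sigma^* = \sigma \mid \bar{y}_t = y) \,\propto\, \nu_0(\sigma)\,\exp\Bigl(-\tfrac{\|y - t\sigma\|^2}{2t}\Bigr) \,\propto\, \nu_0(\sigma)\,\exp\bigl(\langle y, \sigma\rangle\bigr)\, ,$$
where the term $\|\sigma\|^2 = N$ is constant on the hypercube and is absorbed into the normalization. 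Comparing with \eqref{eq:tilt}, this matches $\mu_y$ exactly, so the posterior of $\sigma^*$ given $\bar{y}_t$ alone is $\mu_{\bar{y}_t}$.

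The next step is to upgrade this to conditioning on the full trajectory $(\bar{y}_s)_{s\leq t}$. For $s \le t$ I would decompose $\bar{y}_s = (s/t)\bar{y}_t + \bigl(\bar{B}_s - (s/t)\bar{B}_t\bigr)$; the residual is a Brownian bridge independent of $(\sigma^*, \bar{y}_t)$. Hence $\bar{y}_t$ is a sufficient statistic for $\sigma^*$ given $\mathcal{F}_t^{\bar y} := \sigma(\bar{y}_s : s \le t)$, and
$$\E[\sigma^* \mid \mathcal{F}_t^{\bar y}] \,=\, \E[\sigma^* \mid \bar{y}_t] \,=\, a(\bar{y}_t)\,,$$
where $a(\cdot)$ is the barycenter of the tilted measure defined in \eqref{eq:tilt}.

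I would then invoke the innovation representation: define
$$\tilde{B}_t \,:=\, \bar{y}_t - \int_0^t a(\bar{y}_s)\,\rmd s\,.$$
Conditioning the identity $\bar{y}_t = t\sigma^* + \bar{B}_t$ on $\mathcal{F}_t^{\bar y}$ shows that $\tilde{B}_t$ is an $\mathcal{F}_t^{\bar y}$-martingale, and its quadratic variation coincides with that of $\bar{B}_t$ (as the two differ by a process of bounded variation); by L\'evy's characterization, $\tilde{B}$ is a standard Brownian motion in the filtration $\mathcal{F}_t^{\bar y}$. Therefore $\bar{y}_t$ satisfies $\rmd \bar{y}_t = a(\bar{y}_t)\rmd t + \rmd \tilde{B}_t$, which is exactly the SDE \eqref{eq:sde_sl}. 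Weak uniqueness of this SDE, which follows from the fact that $a(y) = \nabla \log \int e^{\langle y, \sigma\rangle}\nu_0(\rmd\sigma)$ is bounded (valued in $[-1,1]^N$) and smooth with Hessian equal to $\cov(\mu_y)$, yields the equality in law of $(y_t)$ and $(\bar{y}_t)$.

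Finally, to identify the measure-valued process $(\nu_t)$ defined by \eqref{eq:SL_SDE} with the conditional measures $\nu_0(\sigma^* \in \cdot \mid \bar{y}_t) = \mu_{\bar y_t}$, I would verify that setting $\widehat F_t(\sigma) := e^{\langle y_t, \sigma\rangle}/Z_t$ with $Z_t := \int e^{\langle y_t, \sigma'\rangle} \nu_0(\rmd\sigma')$ and applying It\^o's formula using \eqref{eq:sde_sl} recovers $\rmd \widehat F_t(\sigma) = \widehat F_t(\sigma) \langle \sigma - a_t, \rmd B_t\rangle$: the deterministic drift contributions cancel exactly against the It\^o correction thanks to the choice of drift $a_t$ in \eqref{eq:sde_sl}. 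Strong uniqueness of the linear SDE \eqref{eq:SL_SDE} then gives $F_t = \widehat F_t$, so $\nu_t = \mu_{y_t}$ almost surely. The most delicate step in this plan is the innovation argument, since one must check the completeness/right-continuity of the filtrations and the validity of L\'evy's characterization for a potentially degenerate conditional expectation; however, this is a standard step in nonlinear filtering and can be carried out in the present setting without significant complication.
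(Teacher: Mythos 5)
The paper does not prove this proposition; it cites \cite{AhmedMontanari} and \cite{BoazEli}. Your argument is correct and is essentially the standard derivation behind those citations: the Gaussian--Bayes computation identifying the posterior with $\mu_{\bar y_t}$ (using $\|\sigma\|^2=N$), the Brownian-bridge sufficiency reduction, the innovation-process/L\'evy characterization showing $\bar y$ solves \eqref{eq:sde_sl}, and uniqueness for \eqref{eq:sde_sl} and \eqref{eq:SL_SDE} to conclude. The only nitpick is the phrase ``strong uniqueness of the linear SDE \eqref{eq:SL_SDE}'': that SDE is not linear since $a_t$ depends on $F_t$, but it is a finite-dimensional SDE with smooth, locally Lipschitz coefficients, so pathwise uniqueness still applies and the conclusion stands.
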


From the above proposition and Eq.~\eqref{eq:tilt} it is clear that when $\nu_0 = \mu_{G}$, $\nu_t$ is another $\RFIM$ measure on $G$ with external field $h + \bar{y}_t$; in particular, as $t$ gets large it is an $\RFIM$ with a large external field, for which it is easier to prove a Poincar\'e inequality. Importantly, however, even with $\WSM$ in expectation on $\nu_0$, there is no useful almost sure bound on $\|\cov(\nu_t)\|_\op$, as $\nu_t$ is moving through the space of $\RFIM$ measures and there is some chance that the field becomes atypically small, leading to an explosion of this term. Much of the work in our Section~\ref{sec:covariance} is towards obtaining good moment control under $\WSM$ on this covariance along the $\SL$ process.

\section{A weak Poincar\'e inequality under WSM}
\label{sec:weak_PI}
Our main aim is to prove a weak Poincar\'e inequality for the $\RFIM$ on a domain $\Lambda$ under $\WSM$, to obtain Theorem~\ref{thm:algebraic_decay} as a consequence. In this section, we provide this proof modulo two important inputs, one being a bound on the covariance matrix of $\nu_t$ under $\WSM$, and the other being a proof of a full Poincar\'e inequality if the fields are sufficiently anti-concentrated (which will be the output of the localization scheme after a large but constant time.) Those two steps will be the content of Sections~\ref{sec:covariance}--\ref{sec:large_variance} respectively.

\subsection{Approximate variance conservation}
We start with our weak version of an approximate variance conservation bound:

\begin{Thm}\label{thm:approx_var_bound}
  Suppose the $\RFIM$ on $\Lambda$ satisfies $\WSM(C)$ for some $C>0$, then there exists $c_0 = c_0(d,C)>0$ such that for all $\delta>0$, 
the following holds with $\P_h$-probability at least $1-\delta$.
For all $T>0$ and all test functions $\varphi:\{-1,+1\}^{\Lambda} \to \R$, we have
     \begin{equation}\label{eq:approx_var_bound}
         \var_{\nu_0}(\varphi) \leq \big(e^{-c_0}N/\delta\big)^{1/q} \, \EE\big[\var_{\nu_T}(\varphi)\, |\, h\big]^{1/p}\,\osc(\varphi)^{2/q}\, ,
     \end{equation}
     with $N = |\Lambda|$, $p = e^{T/c_0}$ and $1/p+1/q=1$.
\end{Thm}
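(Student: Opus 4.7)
The plan is to combine a differential identity for $f(t) := \E[\var_{\nu_t}(\varphi)\mid h]$ along the $\SL$ process with a trace-power control on $\cov(\nu_t)$ provided by the covariance analysis of the next section.

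First, Itô's formula applied to $\nu_t(\varphi^2) - \nu_t(\varphi)^2$---using that both $\nu_t(\varphi)$ and $\nu_t(\varphi^2)$ are martingales and that the quadratic variation of $t\mapsto \nu_t(\varphi)$ equals $\|\cov_{\nu_t}(\varphi,\sigma)\|_2^2\,dt$---gives
\begin{equation*}
f'(t) \;=\; -\E\bigl[\|\cov_{\nu_t}(\varphi,\sigma)\|_2^2 \mid h\bigr] \;\ge\; -\E\bigl[\var_{\nu_t}(\varphi)\,\|\cov(\nu_t)\|_{\op} \mid h\bigr],
\end{equation*}
the second step being the standard variational bound on $\|\cov_{\nu_t}(\varphi,\sigma)\|_2^2$. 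Hölder conditional on $h$, with conjugate exponents $p,q$ and the elementary inequality $\var_{\nu_t}(\varphi)^q \le \var_{\nu_t}(\varphi)\,\osc(\varphi)^{q-1}$, then yields
\begin{equation*}
-f'(t) \;\le\; f(t)^{1/q}\,\osc(\varphi)^{1/p}\,\E\bigl[\|\cov(\nu_t)\|_{\op}^p \mid h\bigr]^{1/p}.
\end{equation*}

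Next, I will invoke the covariance estimate of Section~\ref{sec:covariance} (Theorem~\ref{thm:trace}): under $\WSM(C)$, there is $c_0 = c_0(d,C) > 0$ such that for every $p \le e^{T/c_0}$ and every $t \in [0,T]$, $\E_h\,\E[\tr(\cov(\nu_t)^p)\mid h] \le e^{-c_0 p}\,N$. Combined with $\|A\|_{\op}^p \le \tr(A^p)$ for PSD $A$, Markov's inequality, and a union bound over a fine discretization of $[0,T]$, this gives with $\P_h$-probability at least $1-\delta$ that $\E[\|\cov(\nu_t)\|_{\op}^p\mid h]^{1/p} \le e^{-c_0}(N/\delta)^{1/p}$ uniformly in $t\in[0,T]$.

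Setting $u(t) := f(t)^{1/p}$ linearizes the differential inequality into $u'(t) \ge -p^{-1}\bigl(e^{-c_0 p}\,\osc(\varphi)\,N/\delta\bigr)^{1/p}$. With $p = e^{T/c_0}$, the prefactor $T/p = Te^{-T/c_0}$ is bounded by $c_0/e$, so integrating from $0$ to $T$ gives $f(0)^{1/p} \le f(T)^{1/p} + C\bigl(e^{-c_0 p}\,\osc(\varphi)\,N/\delta\bigr)^{1/p}$ for a numerical constant $C$. The announced bound \eqref{eq:approx_var_bound} follows after raising to the $p$-th power and using $f(T) \le f(T)^{1/p}\osc(\varphi)^{1/q}$ (valid since $f(T)\le\osc(\varphi)$) to re-express the result multiplicatively, absorbing numerical constants into $c_0$.

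The principal difficulty is the input used in the third step: the trace estimate $\E_h\E[\tr(\cov(\nu_t)^p)\mid h]\le e^{-c_0 p}N$ is non-trivial and is the content of Section~\ref{sec:covariance}. Its proof must combine (i) a super-martingale-in-$t$ property of the pinned total-variation distances that upper bound the off-diagonal entries of $\cov(\nu_t)$---a consequence of $\FKG$---(ii) the Bayesian representation of Proposition~\ref{prop:sl_bayes} to decorrelate distant entries, and (iii) a combinatorial count of length-$p$ closed walks in $\Lambda$ weighted by the $\WSM$ decay factors. The algebraic manipulations above are routine in comparison.
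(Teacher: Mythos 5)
Your opening steps (It\^o's formula, the martingale property of $\nu_t(\varphi)$ and $\nu_t(\varphi^2)$, and the Cauchy--Schwarz bound $-f'(t)\le \E[\var_{\nu_t}(\varphi)\,\|\cov(\nu_t)\|_{\op}\mid h]$) match the paper exactly. The gap is in the input you feed into this inequality and in the ODE analysis that follows. Theorem~\ref{thm:trace} gives $\E_h[\sup_t\E[\tr(\cov(\nu_t)^p)\mid h]]\le (C_0 p)^p N$, not $e^{-c_0 p}N$; the latter is false, since $\|\cov(\nu_t)\|_{\op}$ is not bounded by a constant less than one --- it is typically of order $\log N$ with only an exponential upper tail (this is precisely why the paper proves a \emph{weak} variance conservation bound rather than the usual one). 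With the correct moment bound, your H\"older step yields $-f'(t)\le C_0\, p\,(N\osc(\varphi)/\delta)^{1/p} f(t)^{1/q}$, hence after linearizing, $f(0)^{1/p}\le f(T)^{1/p}+C_0 T\,(N\osc(\varphi)/\delta)^{1/p}$. This is vacuous as soon as $T\ge 1/C_0$, because the error term already exceeds $\osc(\varphi)^{1/p}\ge f(0)^{1/p}$. Even under your (unjustified) stronger input, the additive inequality cannot be converted into the multiplicative form \eqref{eq:approx_var_bound}: raising to the $p$-th power costs a factor $2^{p-1}$, and the resulting error term $\asymp e^{-c_0 p}N\osc(\varphi)/\delta$ does not vanish as $\E[\var_{\nu_T}(\varphi)\mid h]\to 0$, whereas the right-hand side of \eqref{eq:approx_var_bound} does.

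The missing idea is an \emph{adaptive} truncation rather than a fixed H\"older exponent. The paper first converts the trace bound into a tail bound, $\sup_t\P(\|\cov(\nu_t)\|_{\op}\ge R\mid h)\le N\delta^{-1}e^{-c_0 R}$ with $\P_h$-probability $1-\delta$, then splits $\E[\var_{\nu_t}(\varphi)\|A_t\|_{\op}]$ on the event $\{\|A_t\|_{\op}<R\}$ and its complement, and chooses the truncation level $R=c_0^{-1}\log\big(N\osc(\varphi)/(\delta\,\E[\var_{\nu_t}(\varphi)])\big)$ \emph{as a function of the current value of} $f(t)$. This balances the two terms and produces the differential inequality $\frac{\dd}{\dd t}\log u(t)\ge 2c_0^{-1}\log u(t)-c_0^{-1}(1+2\log(N/\delta))$ for $u=f/\osc(\varphi)$, which is linear in $\log u$ and integrates (via $v(t)=e^{-2t/c_0}\log u(t)$) directly to the multiplicative bound with $p=e^{2T/c_0}$. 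If you want to salvage your route, you would need to let the H\"older exponent (or truncation level) depend on $f(t)$ in exactly this self-referential way; with it fixed, the self-improving structure that produces the exponent $1/p$ on $\E[\var_{\nu_T}(\varphi)\mid h]$ is lost.
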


The proof of the above theorem crucially relies on a tail bound for the operator norm of the covariance matrix of $\nu_t$ which we state here:  
\begin{Thm} \label{thm:trace}
    Suppose the $\RFIM$ on $\Lambda$ satisfies $\WSM(C)$ for some $C>0$, then there exists $C_0 = C_0(d,C)>0$ such that for any $p \geq 1$,  
    \begin{equation}\label{eq:trp}
        \EE_h\left[\sup_{t \ge 0} \, \EE \Big[\tr(\cov(\nu_t)^p) \,\big|\, h \Big] \right] \leq (C_0 p)^p\, N\,,
    \end{equation}
    with  $N = |\Lambda|$.
    In particular for all $R\ge 0$,
    \begin{equation}\label{eq:devation_bd}
         \EE_h\left[\sup_{t \ge 0} \, \P\Big( \|\cov(\nu_t)\|_{\op} \ge R \,\big|\, h \Big) \right]   \leq  N e^{-c_0 R}\, , 
    \end{equation}
    with $c_0 = (eC_0)^{-1}$.
\end{Thm}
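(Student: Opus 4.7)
The plan is to prove the trace-moment bound~\eqref{eq:trp} first, and deduce the deviation bound~\eqref{eq:devation_bd} from it. Since $\cov(\nu_t) \succeq 0$ one has $\|\cov(\nu_t)\|_{\op}^p \le \tr(\cov(\nu_t)^p)$, so by Markov's inequality $\P(\|\cov(\nu_t)\|_{\op}\ge R\mid h) \le R^{-p}\EE[\tr(\cov(\nu_t)^p)\mid h]$ for each $t$. Taking $\sup_t$ and then $\EE_h$ on both sides and invoking~\eqref{eq:trp} gives $\EE[\sup_t \P(\|\cov(\nu_t)\|_{\op}\ge R\mid h)] \le (C_0 p / R)^p N$; optimising via $p = R/(eC_0)$ (valid for $R$ large enough that $p\ge 1$; the bound is otherwise trivial) yields the claim with $c_0 = (eC_0)^{-1}$.

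For the trace bound, the strategy is to expand into closed walks and reduce each covariance entry to a $\WSM$-type total-variation influence. Setting $a_{u,v} := \cov_{\nu_t}(\sigma_u,\sigma_v)\ge 0$ (by $\FKG$, which is preserved by the linear tilt defining $\nu_t$),
\begin{equation*}
\tr\big(\cov(\nu_t)^p\big) = \sum_{u_1,\dots,u_p \in \Lambda} a_{u_1,u_2}\, a_{u_2,u_3}\cdots a_{u_{p-1},u_p}\, a_{u_p,u_1}\, .
\end{equation*}
Following~\eqref{eq:TV-influence-bd}, the spatial Markov property of $\nu_t$ and the monotone coupling give $a_{u_i,u_{i+1}} \le D_{r_i}(u_i;\nu_t)$, where $D_r(u;\nu_t)$ denotes the total-variation distance between the marginals on $\sigma_u$ of $\nu_t$ conditioned on $\sigma_{\partial B_r(u)\cap\Lambda} = +$ and $= -$, and $r_i$ is chosen so that consecutive balls $B_{r_i}(u_i)$ are geometrically well separated (e.g.\ $r_i = \lfloor d_\infty(u_i,u_{i+1})/2\rfloor$). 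At $t=0$, $\WSM(C)$ gives $\EE_h[D_r(u;\nu_0)] \le Ce^{-r/C}$.

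To transfer this bound to $\nu_t$ for $t>0$ and to control the expected product of the $D_{r_i}(u_i;\nu_t)$, two ingredients are needed. First, a mean-value super-martingale property $\EE[D_r(u;\nu_t)\mid h] \le D_r(u;\nu_0)$, which follows from an It\^o expansion along the $\SL$ SDE~\eqref{eq:sde_sl} in the spirit of the Dirichlet-form super-martingale in~\cite[Lemma 9]{EKZ}; the non-positivity of the drift crucially uses $\FKG$ applied to the two $\pm$-boundary-conditioned expectations of $\sigma_u$, both of which are monotone in the tilt $y_t$. Second, an approximate factorisation
\begin{equation*}
\EE\Big[\prod_{i=1}^p D_{r_i}(u_i;\nu_t)\,\Big|\, h\Big] \;\lesssim\; \prod_{i=1}^p \EE[D_{r_i}(u_i;\nu_t)\mid h]\, ,
\end{equation*}
obtained from the Bayesian representation of Proposition~\ref{prop:sl_bayes}: conditioning on the planted configuration $\sigma^\ast\sim\nu_0$ makes the tilt $\bar y_t = t\sigma^\ast + \bar B_t$ have independent coordinates, so the factors attached to disjoint balls depend on essentially disjoint pieces of randomness, and $\WSM$ controls the residual coupling through $h$ and $\sigma^\ast$. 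With both ingredients in place the expected sum reduces to a lattice walk count: the number of closed $p$-walks on $\ZZ^d$ starting at a fixed vertex with $\ell_\infty$-step lengths $\ell_1,\dots,\ell_p$ is at most $\prod_i(2\ell_i+1)^d$, and summing $\prod_i(2\ell_i+1)^d e^{-\ell_i/(2C)}$ over $(\ell_1,\dots,\ell_p)\in\mathbb{N}^p$ and multiplying by the $N$ choices of $u_1$ produces the target $(C_0 p)^p N$. The main obstacle will be the approximate factorisation: the super-martingale yields only one-site control, and one must carefully organise the integration over $\sigma^\ast$ and the Brownian path to defeat both the field-induced correlations and geometric ball overlaps; the choice $r_i \asymp d_\infty(u_i,u_{i+1})$ is calibrated precisely so that $\WSM$ guarantees this decoupling.
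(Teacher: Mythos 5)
Your reduction of \eqref{eq:devation_bd} to \eqref{eq:trp} (positive semidefiniteness, Markov, optimizing $p=R/(eC_0)$) is exactly right, and your overall skeleton for \eqref{eq:trp} --- trace expansion into closed walks, the bound $a_{u_i,u_{i+1}}\le \delta_t(u_i,r_i)$ via a separating boundary and monotonicity, the super-martingale property $\EE[\delta_t(u,r)\mid h]\le \delta_0(u,r)$ from an It\^o computation plus $\FKG$, and the use of the Bayesian representation of Proposition~\ref{prop:sl_bayes} to decouple --- matches the paper's proof. But the step you flag as ``the main obstacle'' is precisely the heart of the argument, and as proposed it does not go through.

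The gap is the approximate factorisation $\EE[\prod_{i=1}^p \delta_t(u_i,r_i)\mid h]\lesssim \prod_i \EE[\delta_t(u_i,r_i)\mid h]$ over \emph{all} $p$ indices with $r_i\asymp d_\infty(u_i,u_{i+1})$. The decoupling via Proposition~\ref{prop:sl_bayes} requires the balls $B_{r_i}(u_i)$ to be pairwise disjoint (in fact well separated: the peeling argument conditions on $\sigma^*_{\partial B_{2r_i}(u_i)}$, so one needs $d_\infty(u_i,u_j)\ge 2(r_i+r_j)$ for all pairs), and for a generic closed walk this fails badly: if $u_1$ and $u_3$ are adjacent but $u_2$ is far away, the balls of radii $\asymp d_\infty(u_1,u_2)$ and $\asymp d_\infty(u_3,u_4)$ around $u_1$ and $u_3$ overlap. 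Moreover the residual coupling is not ``controlled by $\WSM$'' at fixed $h$: the peeling yields an \emph{additive} error $\sum_{v\in B_{\ell_i}(u_i)}\delta_0(v,\ell_i)$ per factor, which is small only after averaging over $h$. The paper's resolution (Propositions~\ref{prop:bd_product}--\ref{prop:bound_gamma}) is to factorise only over a well-separated sub-family $A\subseteq[p]$: setting $r_i=\tfrac14\min_{j<i}d_\infty(u_i,u_j)$, a dyadic pigeonhole selects the scale $k_*$ maximising $|Q_k|2^k$ and takes $A=Q_{k_*}$, which guarantees both the separation conditions \eqref{eq:cond_elli} and the inequality $\sum_{i\in Q_{k_*}}r_i\gtrsim\sum_{i\in[p]}\sqrt{r_i}$, so that exponential decay in the selected indices converts into stretched-exponential decay $e^{-c\sqrt{r_i}}$ in \emph{every} index. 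Relatedly, your final count is inconsistent with the target: summing $\prod_i(2\ell_i+1)^d e^{-\ell_i/(2C)}$ over all step lengths would give $C_0^pN$, not $(C_0p)^pN$; the factor $p^p\sim p!$ in the theorem is forced by the actual combinatorics (each $u_i$ attaches to the \emph{nearest earlier} point $u_{J_i}$, not to $u_{i-1}$, and there are $p!$ choices of the map $i\mapsto J_i$). Without the subset selection and the $\sqrt{r_i}$ trick, the argument does not close.
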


 The proof of the above theorem is combinatorial in nature and is deferred to Section~\ref{sec:covariance}. 
  
\begin{proof}[Proof of Theorem~\ref{thm:approx_var_bound}]
 We proceed by establishing a variational inequality for $\E \big[\var_{\nu_t}(\varphi) | h\big]$. 
 First we note that it follows from Theorem~\ref{thm:trace} and Markov's inequality that
\begin{equation}\label{eq:cov_bound2}
          \sup_{t \ge 0}\, \P\Big( \|\cov(\nu_t)\|_{\op} \ge R \, | \, h\Big) \leq  N \delta^{-1} e^{-c_0 R}\, ,
\end{equation}
with $\P_{h}$-probability $1-\delta$.
 We consider $h$ and  $\delta>0$ fixed throughout and condition on the event given in Eq.~\eqref{eq:cov_bound2}.  For ease of notation it is implicitly understood that all expectations are conditional on $h$, i.e., we only take expectation with respect to the $\SL$ process. 

Define the martingale $M_t := \nu_t(\varphi)$. Using the definition of the $\SL$ process Eq.~\eqref{eq:SL_SDE}, 
\begin{align*}
    \dd M_t &= \dd \int_{\{-1,+1\}^{\Lambda}} \varphi(x) \nu_t(\dd x) = \int_{\{-1,+1\}^{\Lambda}} \varphi(x) \dd F_t(x) \nu_0(\dd x) \\
    &= \int_{\{-1,+1\}^{\Lambda}} \varphi(x) \langle x-a_t, \dd B_t \rangle \nu_0(\dd x)\, , 
\end{align*}
 and therefore we have 
\begin{align}
    \dd [M]_t &= \left\| \int_{\{-1,+1\}^{\Lambda}} \varphi (x)  (x-a_t)  \nu_t(\dd x) \right\|^2 \dd t \nonumber\\
    &= \sup_{\lvert \theta \lvert = 1} \left( \int_{\{-1,+1\}^{\Lambda}} \varphi(x) \langle x-a_t, \theta \rangle  \nu_t(\dd x) \right)^2 \dd t \nonumber\\
    & \overset{(\ast)}{\leq} \var _{\nu_t}(\varphi) \sup_{\| \theta \| = 1} \int_{\{-1,+1\}^{\Lambda}} \langle x-a_t, \theta \rangle^2  \nu_t(\dd x) \, \dd t \nonumber \\
    &=  \var _{\nu_t}(\varphi) \, \big\|\cov(\nu_t)\big\|_{\op} \, \dd t\, ,\label{eq:[M]_t}
\end{align}
where in $(\ast)$ we apply the Cauchy–Schwarz inequality, and $\cov(\nu_t)$ refers to the covariance matrix of $\nu_t$:
\begin{equation*}
\cov(\nu_t) = \int_{\{-1,+1\}^\Lambda} \sigma \sigma^{\top} \nu_t(\dd \sigma) - a_t a_t^\top \, ,~~~ a_t = \int_{\{-1,+1\}^\Lambda} \sigma \, \nu_t(\dd \sigma)\, .
\end{equation*}
Let us write $A_t :=\cov (\nu_t)$. 
  Next, we  note that $\nu_t(\varphi^2)$ is a martingale. It follows from It\^{o}'s formula that
\begin{align} 
    \dd \var_{\nu_t} (\varphi) &= \dd \left( \nu_t(\varphi^2) - M_t^2 \right) =\dd \nu_t(\varphi^2) -2M_t\dd M_t-\frac{1}{2}2\dd[M]_t \nonumber \\
    &= -\dd [M]_t + \text{martingale}\label{eq:dvar2} \, .
\end{align}
It follows from Eq.~\eqref{eq:dvar2} and Eq.~\eqref{eq:[M]_t} that
\begin{align}
\frac{\dd}{\dd t} \E\big[\var_{\nu_t} (\varphi)\big] &\ge  - \E\Big[\var _{\nu_t}(\varphi)    \|A_t\|_{\op} \Big] \nonumber\\
&\ge -\E\Big[\var _{\nu_t}(\varphi)  \|A_t\|_{\op}  \one_{\|A_t\|_{\op} < R} \Big] -
\E\Big[\var _{\nu_t}(\varphi)  \|A_t\|_{\op}  \one_{\|A_t\|_{\op} \ge R} \Big] \, ,
\label{eq:dvar3}
\end{align}
for every $R>0$. We bound the two terms as 
\begin{align*}
\E\Big[\var _{\nu_t}(\varphi)  \|A_t\|_{\op}  \one_{\|A_t\|_{\op} < R} \Big] \le R \, \E\big[\var _{\nu_t}(\varphi) \big]\, ,
\end{align*}
and 
\begin{align*}
\E\Big[\var _{\nu_t}(\varphi)  \|A_t\|_{\op}  \one_{\|A_t\|_{\op} \ge R} \Big] &\le \osc(\varphi)^2 \E\big[  \|A_t\|_{\op}  \one_{\|A_t\|_{\op} \ge R} \big]\\
&= \osc(\varphi)^2 \Big(R \, \P\big( \|A_t\|_{\op} \ge R\big) + \int_R^{\infty} \P\big( \|A_t\|_{\op} \ge r\big) \rmd r \Big)\\
&\le  \osc(\varphi)^2 N \delta^{-1} (R + c_0^{-1}) e^{-c_0 R} \, ,
\end{align*}
where the last line follows from Eq.~\eqref{eq:cov_bound2}.  We choose 
\begin{equation*}
R = c_0^{-1}\log \left(\frac{N \osc(\varphi)^2}{\delta \E\big[\var _{\nu_t}(\varphi) \big]}\right) \, .
\end{equation*}
 From Eq.~\eqref{eq:dvar3} we obtain the differential inequality 
\begin{equation}\label{eq:ode1}
\frac{\dd}{\dd t} \E\big[\var_{\nu_t} (\varphi)\big] \ge - c_0^{-1} \left(2\log \left(\frac{N \osc(\varphi)^2}{\delta \E\big[\var _{\nu_t}(\varphi) \big]}\right)+1\right)  \E\big[\var_{\nu_t} (\varphi)\big] \, .
\end{equation}
With 
\begin{equation*}
u(t) := \E\big[\var_{\nu_t} (\varphi)\big] \big/ \osc(\varphi)^2 \, ,
\end{equation*}
 Eq.~\eqref{eq:ode1} becomes
\begin{equation*}
\frac{\dd}{\dd t} \log u(t) \ge 2c_0^{-1}\log u(t) - c_0^{-1} \big(1+ 2\log (N/\delta)\big) \, .
\end{equation*}
Considering the function $v(t) = e^{-2t/c_0} \log u(t)$ and integrating, we obtain 
\begin{align*}\label{eq:ode4}
 u(t) &\ge  u(0)^{e^{2t/c_0}} (N/\delta)^{1-e^{2t/c_0}} \exp\Big( - c_0 \big(e^{2t/c_0} -1\big)/2 \Big)\, .
\end{align*}
This implies the claimed bound.
\end{proof}

\subsection{A weak Poincar\'e inequality}
In this section we prove the weak Poincar\'e inequality for the $\RFIM$ on $\Lambda$ under $\WSM$. 
\begin{Thm}\label{thm:weak_PI}
Suppose $\WSM(C)$ holds for the $\RFIM$ on $\Lambda$ for some constant $C>0$. Fix $\delta \in (0,1)$. With $\P_h$-probability at least $1-\delta$, the following holds for all $\varphi:\{-1,+1\}^{\Lambda} \to \R$:
\begin{equation} \label{eq:wPI_nu}
         \var_{\mu_{\Lambda}}(\varphi) \leq  K \delta^{-1/q} N^{\kappa} \,\mE_{\mu_{\Lambda}}(\varphi,\varphi)^{1/p} \cdot \osc(\varphi)^{2/q} \, ,
\end{equation}
where $p,q \ge 1$, $1/p+1/q=1$, and $K, \kappa >0$, all depending on $C,\beta,d$.  
\end{Thm}

Before starting the proof we quote the key result from Section~\ref{sec:large_variance} that a $\RFIM$ with a strong external field obeys a spectral gap inequality with high probability:

\begin{Thm}\label{thm:fast-mixing-large-variance0}
    For every $d,\beta>0$ and $0<\varepsilon_d<1/d$, there exist $\epsilon,K,\kappa$ such that if we consider the $\RFIM$ on $\Lambda$ with external field $h$ such that $(|h_x|)_{x \in \Lambda}$ are i.i.d., with $\P(|h_x|\le K) \le \epsilon$, then for all $L\ge \kappa \log N$,
    \begin{align*}
        \gap_{\Lambda}^{-1} \le e^{L^{1-\varepsilon_d}} \,.
    \end{align*} 
    with probability at least $1- e^{-L}$.
\end{Thm}

\begin{proof}[Proof of Theorem~\ref{thm:weak_PI}]
First as per Proposition~\ref{prop:sl_bayes}, the external field of the $\RFIM$ measure $\nu_T$ is  $y_T + h \stackrel{d}{=} T\sigma^* + B_T + h$, where $\sigma^* \sim \nu_0 = \mu_{\Lambda}$, and $(B_t)$ is an independent standard Brownian motion. Since $\sigma^*$ is a binary vector and the distribution of $h$ is symmetric, the vector of magnitudes $|y_T+h|$ (with absolute value applied entrywise) is distributed as $|T+B_T+h|$ and hence has independent coordinates. Next, let $\epsilon, K, \kappa$ as in Theorem~\ref{thm:fast-mixing-large-variance0}. Letting $Z \sim N(0,1)$, we have for any $H>0$,
\[\P\big(|T+\sqrt{T}Z+h_x| \le K\big) \le \P\big(|T+\sqrt{T}Z| \le K + H\big)+ \P\big(|h_x| \ge H\big)\, .\]
We choose $H$ and $T$ large enough so that the above is smaller than $\epsilon$. (The claim that the second term above can be made small follows by tightness of any single probability measure on $\R$; see~\cite[Theorem 1.3]{billingsley2013convergence}.)

By Theorem~\ref{thm:fast-mixing-large-variance0}, for all $L \ge \kappa\log N$ the following Poincar\'e inequality holds with probability at least $1-e^{-L}$:   
\begin{equation} \label{eq:gap_nuT2}
\var_{\nu_T}(\varphi) \le e^{L^{1-\eps_d}} \mE_{\nu_T} (\varphi, \varphi) \, ,~~~~\mbox{for all } \varphi : \{-1,+1\}^{\Lambda} \to \R\, .
\end{equation}

Now let $E_L$ denote the event ``Eq.~\eqref{eq:gap_nuT2} holds" for some $L \ge \kappa \log N$ to be chosen later. By Markov's inequality, $\P(E_L^c \,|\, h) \le e^{-L}/\delta$ with $\P_h$-probability at least $1-\delta$. 
Then with the same probability it holds that for any $\varphi$ possibly depending on $h$ but not on $y_T$,  
\begin{align} 
\E\big[\var_{\nu_T}(\varphi)\, |\, h\big] &\le e^{L^{1-\eps_d}} \,\E\big[\mE_{\nu_T}(\varphi,\varphi) \one_{E_L} \, |\, h\big] + \E\big[\var_{\nu_T}(\varphi) \one_{E_L^c}\, |\, h\big] \nonumber\\
&\le e^{L^{1-\eps_d}} \,\mE_{\nu_0}(\varphi,\varphi) + \delta^{-1} e^{-L} \osc(\varphi)^2\, .\label{eq:g}
\end{align}
The last inequality follows from $\var_{\nu_T}(\varphi) \le \osc(\varphi)^2$ and the fact that $t \mapsto \mE_{\nu_t}(\varphi,\varphi)$ is a super-martingale (see~\cite[Lemma 9]{EKZ}).
Next, let us assume that
\begin{equation}\label{eq:small_E}
\mE_{\nu_0}(\varphi,\varphi) \le N^{-\kappa} \osc(\varphi)^2\, , 
\end{equation}
since otherwise, 
\begin{equation}\label{eq:sPI}
\var_{\nu_0}(\varphi) \le \osc(\varphi)^2 \le N^{\kappa} \, \mE_{\nu_0}(\varphi,\varphi)\, ,
\end{equation}
and we are done. We then choose
\begin{equation*}
L = \log \left(\frac{\osc(\varphi)^2}{\delta\mE_{\nu_0}(\varphi,\varphi)}\right) \ge \kappa\log N \, .
\end{equation*}
Since for any $\eps>0$,
\begin{equation*}
e^{L^{1-\eps_d}} \le \left(\frac{\osc(\varphi)^2}{\delta \mE_{\nu_0}(\varphi,\varphi)}\right)^{\eps} \, ,
\end{equation*}
provided $N \ge N_0$ for some $N_0 = N_0(d,\eps)$, Eq.~\eqref{eq:g} implies
\begin{align*} 
\E\big[\var_{\nu_T}(\varphi)\, |\, h\big] 
&\le  \delta^{-\eps} \mE_{\nu_0}(\varphi,\varphi)^{1-\eps} \osc(\varphi)^{2\eps} + \mE_{\nu_0}(\varphi,\varphi) \\
& \le \big(\delta^{-\eps} + N^{-\kappa \eps}\big) \, \mE_{\nu_0}(\varphi,\varphi)^{1-\eps} \osc(\varphi)^{2\eps} \, ,
\end{align*}
where the last line follows from~\eqref{eq:small_E}. 
Combining the above with the weak approximate variance conservation bound of Theorem~\ref{thm:approx_var_bound} we obtain with $\P_h$-probability at least $1-2\delta$, for all $\varphi:\{-1,+1\}^{\Lambda} \to \R$, 
\begin{equation} \label{eq:wPI_nu01}
         \var_{\nu_0}(\varphi) \leq  K \big(\delta^{-\eps} + N^{-\kappa \eps}\big)^{1/p}(N/\delta)^{1/q} \,\mE_{\nu_0}(\varphi,\varphi)^{(1-\eps)/p} \cdot \osc(\varphi)^{2\eps/p+2/q} \, ,
\end{equation}
where $p = e^{2T/c_0}$, $1/p+1/q=1$, and $K = e^{-c_0/(2q)}$. We ignore the term $N^{-\kappa \eps}$ and take $p' = (1-\eps)/p$, $1/q' = \eps/p + 1/q$. We obtain the desired conclusion by combining Eq.~\eqref{eq:sPI} with Eq.~\eqref{eq:wPI_nu01} into one statement and adjusting the constant $\kappa$.     
\end{proof}

\subsection{Polynomial relaxation and construction of the sampling algorithm}
\label{sec:proof_algebraic_decay}
In this section we prove Theorems~\ref{thm:algebraic_decay} and Theorem~\ref{thm:sampling_alg}.

\subsubsection{Polynomial relaxation and mixing from warm start}
\begin{proof}[Proof of Theorem~\ref{thm:algebraic_decay}] The polynomial relaxation is a direct consequence of the weak PI as shown in the paper of Liggett~\cite{liggett1991l_2}. We reproduce the argument here for completeness.    
From Theorem~\ref{thm:weak_PI} (applied to $\Lambda = \Lambda_n$) we have with $\P_h$-probability $1-\delta$,
\begin{equation} \label{eq:wPI_nu2}
         \var_{\mu_{\Lambda}}(\varphi) \leq  A\,\mE_{\mu_{\Lambda}}(\varphi,\varphi)^{1/p} \cdot \osc(\varphi)^{2/q} \, ,
\end{equation}
for all $\varphi$, with $A = K \delta^{-1/q} N^{\kappa}$.
Recalling that $P_t$ is the Markov semigroup of continuous-time Glauber dynamics, we have 
\begin{align*} \label{eq:t_decay0}
        \frac{\rmd}{\rmd t} \var_{\mu_{\Lambda}}(P_t\varphi) &= -2 \mE_{\mu_{\Lambda}}(P_t\varphi,P_t\varphi)  \\
        &\le   -2  \frac{\var_{\mu_{\Lambda}}(P_t\varphi)^p}{A^p\osc(P_t\varphi)^{2p/q}} \\
        &\le -2  \frac{\var_{\mu_{\Lambda}}(P_t\varphi)^p}{A^p\osc(\varphi)^{2p/q}}\, ,
\end{align*}
since $\osc(P_t \varphi) \le \osc(\varphi)$. Equivalently,
\begin{equation*}
\frac{\rmd}{\rmd t} \var_{\mu_{\Lambda}}(P_t\varphi)^{1-p} \ge \frac{2(p-1)}{A^p\osc(\varphi)^{2p/q}}\,.
\end{equation*}
By integrating we find for all $t>0$, 
\begin{align}
\var_{\mu_{\Lambda}}(P_t\varphi) &\le \frac{\var_{\mu_{\Lambda}}(\varphi)}{\Big(1+2(p-1)t\var_{\mu_{\Lambda}}(\varphi)^{p-1}/A^p\osc(\varphi)^{2p/q}\Big)^{1/(p-1)}} \label{eq:w_more_precise}\\
&\le \frac{A^q\osc(\varphi)^2}{(2(p-1)t)^{1/(p-1)}} \, .\nonumber 
\end{align}
Recalling $A=K \delta^{-1/q} N^{\kappa}$ this yields Eq.~\eqref{eq:algebraic} of Theorem~\ref{thm:algebraic_decay}.
\end{proof}

\paragraph{Mixing from a warm start} 
Next, as a preliminary to the construction of the sampling algorithm we first show that the weak PI~\eqref{eq:wPI_nu2} implies polynomial time mixing from a \emph{warm start}. 
 Let us apply Eq.~\eqref{eq:wPI_nu2} to the indicator function $\varphi = \one_{S}$, $S \subset \{-1,+1\}^{\Lambda}$:
\begin{equation}\label{eq:large_set_expansion}
\mu_{\Lambda}(S)(1-\mu_{\Lambda}(S)) \le A Q(S,S^c)^{1/p} \, ,
\end{equation}
where $Q(S,S^c) = \sum_{x\in S,y\in S^c} \mu_{\Lambda}(x)P(x,y)$, and $P$ is the transition matrix of Glauber dynamics. Note that the inequality~\eqref{eq:large_set_expansion} is a statement of large set expansion: it is a conductance bound which is most effective when $\mu_{\Lambda}(S)$ is neither too small nor too close to 1.

For ease of notation in this subsection, let $(\sigma_k)_{k \ge 0}$ be the discrete-time Markov chain of Glauber dynamics, and let $\pi_k$ be the distribution of $\sigma_k$ with $\pi_{\infty} = \mu_{\Lambda}$. 

\begin{Lem}\label{lem:warm-start-mixing}
Suppose $\pi_{\infty}$ satisfies the weak conductance bound
\begin{equation}\label{eq:weak_conductance}
\pi_{\infty}(S)(1-\pi_{\infty}(S)) \le  A Q(S,S^c)^{1/p} \, ,~~~ \forall ~S \subseteq \{-1,+1\}^{\Lambda}\, ,
\end{equation}
with $p \ge 1$, $A^p\ge 2/4^{p-1}$, and we further assume a warm start condition for the chain: 
\begin{equation}\label{eq:warm_start}
\max_{S \subseteq \{-1,+1\}^\Lambda} \frac{\pi_0(S)}{\pi_{\infty}(S)} \le M\, , 
\end{equation}
 for some $M>0$. Then 
\begin{equation}\label{eq:tv}
d_{\sTV}\big(\pi_k , \pi_{\infty}\big) \le M \left(\frac{A^{2p}\log k}{k}\right)^{1/(2p-1)} \, ,~~~ k \ge 1 \, .
\end{equation}
\end{Lem}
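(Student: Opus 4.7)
The plan is to adapt the Lov\'asz--Simonovits level-function machinery~\cite{lovasz1993random} to the weak conductance bound~\eqref{eq:weak_conductance}, in the spirit of Liggett's $L^2$ treatment~\cite{liggett1991l_2} of critical interacting particle systems.

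First (level function), for each $k$ I would order the states $x_1,x_2,\dots$ of $\{-1,+1\}^{\Lambda}$ in decreasing order of the density ratio $\pi_k(x_j)/\pi_\infty(x_j)$, and define the piecewise-linear concave function $h_k\colon[0,1]\to[0,1]$ by $h_k\bigl(\sum_{j\le i}\pi_\infty(x_j)\bigr)=\sum_{j\le i}\pi_k(x_j)$. Then $h_k$ is concave and nondecreasing with $h_k(t)\ge t$, and $d_{\sTV}(\pi_k,\pi_\infty)=\max_{t\in[0,1]}(h_k(t)-t)$. The warm-start hypothesis~\eqref{eq:warm_start} immediately gives $u_0(t):=h_0(t)-t\le (M-1)g(t)$, where $g(t):=\min(t,1-t)$.

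Second (one-step recursion), the Lov\'asz--Simonovits one-step inequality~\cite[Lemma 1.4]{lovasz1993random} yields
\begin{equation*}
h_{k+1}(t)\le \tfrac{1}{2}\bigl(h_k(t-\psi(t))+h_k(t+\psi(t))\bigr),\qquad t\in[0,1],
\end{equation*}
where $\psi(t)\ge 2Q(S_t,S_t^c)$ and $S_t$ is the initial segment of the step-$k$ level ordering with $\pi_\infty(S_t)=t$. Plugging in~\eqref{eq:weak_conductance} and using $t(1-t)\ge g(t)/2$ gives the weak flattening $\psi(t)\ge 2^{1-p}(g(t)/A)^p$; the hypothesis $A^p\ge 2/4^{p-1}$ ensures $\psi(t)\le g(t)$, so the evaluation points remain in $[0,1]$. (If $P$ is not $1/2$-lazy, a standard lazification costs only a factor $2$ in $k$.)

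Third (iteration), setting $u_k=h_k-\mathrm{id}$ I obtain the same recursion $u_{k+1}(t)\le \tfrac{1}{2}(u_k(t-\psi(t))+u_k(t+\psi(t)))$, with $u_k$ concave and $u_k(0)=u_k(1)=0$. I would prove by induction on $k$ that the envelope
\begin{equation*}
u_k(t)\le \min\bigl((M-1)g(t),\; C M\,(A^{2p}\log k/k)^{1/(2p-1)}\bigr)
\end{equation*}
is preserved. The key observation is that at the peak $t^\star_k$ of $u_k$, concavity implies $u_{k+1}(t^\star_k)\le u_k(t^\star_k)-c\,\psi(t^\star_k)^2/u_k(t^\star_k)$; when $u_k(t^\star_k)\asymp (M-1)t^\star_k$ this yields a geometric decrement $\asymp (g(t^\star_k)/A)^{2p}/((M-1)t^\star_k)$ per step. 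Balancing this against the linear piece $(M-1)g(t)$ of the envelope forces $t^\star_k\asymp (A^{2p}/k)^{1/(2p-1)}$, and the logarithmic factor emerges from summing the per-step decrements across the $\asymp \log k$ successive dyadic scales of $t^\star_k$ that the peak traverses on its way to the boundary. Combined with Step~1 this yields~\eqref{eq:tv}.

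The main obstacle is Step~3. Because $\psi(t)\asymp g(t)^p$ vanishes at $t=0,1$, the peak of $u_k$ can linger near the boundary and the rate degenerates from exponential to polynomial. The delicate part is verifying that concavity of $u_k$ is preserved along the recursion and that the peak position $t^\star_k$ migrates toward the boundary on exactly the right timescale to extract the $1/(2p-1)$ exponent; this is the same mechanism used in~\cite{rockner2001weak,andrieu2022poincar} to convert a weak Poincar\'e inequality into an algebraic TV-mixing bound from a warm start.
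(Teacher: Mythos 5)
Your Steps 1 and 2 match the paper's proof: the Lov\'asz--Simonovits level function $h_k$, the one-step recursion $h_k(x)\le\tfrac12[h_{k-1}(x-2(A^{-1}x(1-x))^p)+h_{k-1}(x+2(A^{-1}x(1-x))^p)]$ obtained from the weak conductance bound (with the condition $A^p\ge 2/4^{p-1}$ ensuring the evaluation points stay in $[0,1]$), and the warm-start bound $h_0(x)\le Mx$ are all exactly what the paper uses. The problem is Step 3, which is the heart of the lemma and which you leave as a heuristic. Your claimed peak decrement ``concavity implies $u_{k+1}(t^\star_k)\le u_k(t^\star_k)-c\,\psi(t^\star_k)^2/u_k(t^\star_k)$'' does not follow from concavity alone: for a concave $u_k$ the average $\tfrac12(u_k(t-\psi)+u_k(t+\psi))$ is merely $\le u_k(t)$, with no quantitative gain unless you propagate an explicit strictly concave envelope through the recursion. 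The subsequent peak-migration and dyadic-scale-summation argument is not carried out, and you yourself flag it as ``the main obstacle.'' As written, the bound \eqref{eq:tv} is not established.

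The paper closes this gap much more directly, with no tracking of the peak location and no concavity-preservation issue (the LS curve is concave by construction, but that is never needed). One combines $h_0(x)\le Mx$ with the trivial $h_0(x)\le 1-x$ to get the supersolution $h_0(x)\le\sqrt{Mx(1-x)}$, and then verifies by a direct computation that this envelope contracts under the recursion: writing $a=2A^{-p}x^{p-1}(1-x)^p$ and $b=2A^{-p}x^p(1-x)^{p-1}$, Jensen's inequality gives
\begin{equation*}
\tfrac12\Bigl(\sqrt{(1-a)(1+b)}+\sqrt{(1+a)(1-b)}\Bigr)\le\sqrt{1-ab}\,,
\end{equation*}
so $h_1(x)\le\sqrt{Mx(1-x)}\,\bigl(1-4A^{-2p}x^{2p-1}(1-x)^{2p-1}\bigr)^{1/2}$. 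Iterating on the region $x(1-x)\ge r$ yields the exponential bound $h_k(x)\le\sqrt{M}\,e^{-2A^{-2p}r^{2p-1}k}$, while on the complementary region one uses the monotone-in-$k$ bound $h_k(x)\le h_0(x)\le Mx\le Mr$ (which holds since $h_{k+1}(x)=\sup_g\int Pg\,\rmd\pi_k-x\le h_k(x)$). Optimizing $r^{2p-1}=(A^{2p}\log k)/(2k)$ balances the two regimes and produces exactly the $(\log k/k)^{1/(2p-1)}$ rate of \eqref{eq:tv}. You would need to replace your Step 3 by an argument of this type (or fully execute your peak-migration scheme, including a quantitative per-step decrement) for the proof to be complete.
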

Before starting the proof let us remark that in our application of the above Lemma, $A = K \delta^{-1/q}N^{\kappa}$ for which~\eqref{eq:large_set_expansion} holds from~\eqref{eq:wPI_nu2}. We may assume that the condition imposed on $A$ is satisfied provided $N$ is large enough or $\delta$ is small enough.
\begin{proof}
We follow an approach of Lovasz and Simonovits~\cite{lovasz1993random}. We consider the function   
\begin{equation*}
h_{k}(x) = \sup_{g} \int_{\{-1,+1\}^{\Lambda}} g \rmd \pi_k - x \, ,~~~ x \in [0,1]\, ,
\end{equation*}
where the supremum is taken with respect to all measurable functions $g : \{-1,+1\}^{\Lambda} \to [0,1]$ such that $\int g \rmd \pi_{\infty} = x$. We observe that
\begin{equation}\label{eq:tv_h}
d_{\sTV}\big(\pi_k , \pi_{\infty}\big) = \max_{x} h_k(x)\, .
\end{equation}
A straightforward adaptation of Lemma 1.3  in~\cite{lovasz1993random} which uses the bound~\eqref{eq:weak_conductance} states the following recursion:   
\begin{equation}\label{eq:recursion}
h_{k}(x) \le \frac{1}{2}\Big[h_{k-1}\big(x- 2(A^{-1}x(1-x))^{p}\big) + h_{k-1}\big(x + 2(A^{-1}x(1-x))^{p}\big)\Big] \, ,~~~  k \ge 1\, ,
\end{equation} 
valid for all $x$ such that the arguments in the above display are in $[0,1]$. A sufficient condition for this to hold for all $x\in [0,1]$ is $A^p \ge 2/4^{p-1}$.
By Eq.~\eqref{eq:warm_start} we have $h_{0}(x) \le Mx $. Since we also have $h_{0}(x) \le 1-x$ it follows that 
\begin{equation*}
h_{0}(x) \le \sqrt{Mx(1-x)} \, .
\end{equation*}
Using this we have 
\begin{align*}
h_{0}\big(x- 2(A^{-1}x(1-x))^{p}\big)^2 &\le M x(1-x)\big(1-2A^{-p}x^{p-1}(1-x)^{p}\big)\big(1+2A^{-p}x^{p}(1-x)^{p-1}\big)\,, \\
h_{0}\big(x+ 2(A^{-1}x(1-x))^{p}\big)^2 &\le M x(1-x)\big(1+2A^{-p}x^{p-1}(1-x)^{p}\big)\big(1-2A^{-p}x^{p}(1-x)^{p-1}\big)\, .
\end{align*}
With $a= 2A^{-p}x^{p-1}(1-x)^{p}$ and $b = 2A^{-p}x^{p}(1-x)^{p-1}$, the recursion~\eqref{eq:recursion} implies the following bound on $h_1(x)$:
\begin{align*}
h_{1}(x) &\le \frac{1}{2}\sqrt{Mx(1-x)} \Big( \sqrt{(1-a)(1+b)} + \sqrt{(1+a)(1-b)}\Big) \, ,\\
&\le \sqrt{Mx(1-x)} \sqrt{1-ab} \, ,\\
&=\sqrt{Mx(1-x)} \big(1-4A^{-2p}x^{2p-1}(1-x)^{2p-1}\big)^{1/2}\, ,
\end{align*}
where the second line follows from Jensen's inequality. 
Now assume that $x(1-x) \ge r$ for some $r>0$. Then 
\begin{equation*}
h_{1}(x) \le \sqrt{Mx(1-x)} \big(1-4A^{-2p}r^{2p-1}\big)^{1/2} \, .
\end{equation*}
By iterating this bound, if follows that 
\begin{align*}
h_{k}(x) &\le \sqrt{Mx(1-x)} \big(1-4A^{-2p}r^{2p-1}\big)^{k/2} \, , \\
&\le \sqrt{M} e^{-2A^{-2p}r^{2p-1}k}\, ,~~~ k \ge 0 \, .
\end{align*}
On the other hand we have the bound $h_{k}(x) \le h_0(x) \le Mx$ for all $x$; indeed  
we have $h_{k+1}(x) = \sup_{g} \int Pg \rmd \pi_k - x \le h_k(x)$ since $Pg \in [0,1]$ and $\int Pg \rmd \pi_{\infty} = x$ since $\pi_{\infty}$ is stationary for $P$. 
Now from Eq.~\eqref{eq:tv_h} it follows that for any $r \in (0,1)$,
\begin{align*}
d_{\sTV}\big(\pi_k , \pi_{\infty}\big) &\le  \max\Big\{\sqrt{M} e^{-2A^{-2p}r^{2p-1}k} \, , \, Mr\Big\} \\
&\le \max\left\{\frac{\sqrt{M}}{k} \, , \, M \left(\frac{A^{2p}\log k}{2k}\right)^{1/(2p-1)} \right\}\, ,
\end{align*}
where the last line follows by taking $r^{2p-1} = (A^{2p}\log k)/(2k) $. 
\end{proof}

\subsubsection{A sampling algorithm under WSM}

In this subsection we prove Theorem~\ref{thm:sampling_alg}. We use the mixing result from a warm start of the previous subsection to build a polynomial-time sampler from the $\RFIM$ on $\Lambda_n$.

This sampler will proceed by building the sample up one vertex at a time. 
Fix an enumeration $v_1=o,\cdots,v_N$ of the vertices of $\Lambda_n$ in such a way that  $\Lambda^{(i)} = \{v_j\in \Lambda: j\le i\} \in \mathcal{Q}_n$ (recall that $\mathcal{Q}_n $ is the collection of `cube-like' subsets $B_r(o) \cup (\partial B_r(o)\cap A)$, with $r \le n-1$ and $A \subset \ZZ^d$.) 
This can be done by having the enumeration grow layer by layer in a nucleation fashion, and such that $\Lambda^{(i)}$ is always a union of at most $d$ cubes. 
The algorithm is as follows:
\begin{enumerate}
    \item Start with a perfect sample $X_*^{(1)}$ of the $\RFIM$ on a single origin vertex $\Lambda^{(1)} = \{v_1\}$. ($X_*^{(1)}\in\{-1,+1\}$ is drawn with probabilities proportional to $e^{\pm h_{v_1}}$.) 
    \item For each $i \ge 2$, consider a Glauber dynamics chain $(X^{(i)}_k)_{k \ge 0}$ on the $\RFIM$ measure $\mu_{\Lambda^{(i)}}$ on $\Lambda^{(i)}$  initialized from $X_0^{(i)} = [X_*^{(i-1)},\sigma_{v_i}] \in \{-1,+1\}^{\Lambda^{(i)}}$, the concatenation of $X_*^{(i-1)}$ with a spin $\sigma_{v_i}\in \{\pm 1\}$ drawn independently with probabilities proportional to $e^{\pm h_{v_i}}$. 
    \item Run Glauber dynamics from this initialization for time $k_* = N^{C_*}$ to obtain $X_*^{(i)} = X^{(i)}_{k_*}$.
\end{enumerate}
The constant $C_*$ will be chosen subsequently, depending on the total variation distance $\eps$ and probability $\delta$ we are aiming for.

\begin{Prop}\label{prop:sampling-algorithm}
    The sampling algorithm described by (1)--(3) above produces a sample that is $\eps$-close in total variation to $\mu_{\Lambda_n}$ with $\P_h$-probability $1-\delta$ and in time that is polynomial in $N$. This can be achieved for any $\eps, \delta \gtrsim 1/\textup{poly}(N)$. 
\end{Prop}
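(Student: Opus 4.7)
The plan is to prove by induction on $i = 1, \ldots, N$ that the law $\pi_*^{(i)}$ of the algorithm's output $X_*^{(i)}$ satisfies $d_{\sTV}(\pi_*^{(i)}, \mu_{\Lambda^{(i)}}) \le \eps_i$ for a sequence $\eps_i$ growing linearly in $i$, with $\eps_N \le \eps$. The two key ingredients are Theorem~\ref{thm:weak_PI} applied to each $\Lambda^{(i)}\in \mathcal{Q}_n$, and the warm-start mixing bound of Lemma~\ref{lem:warm-start-mixing}. A straightforward combinatorial check (grow $\Lambda^{(i)}$ layer-by-layer around $o$ as at most the union of $d$ cubes) ensures the enumeration $v_1,\ldots,v_N$ can be chosen so that $\Lambda^{(i)}\in \mathcal{Q}_n$ at every step, which is the only place where the shape of $\mathcal{Q}_n$ is used.

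First I would establish a deterministic warm-start estimate for an idealized initialization. Let $\hat{\pi}^{(i+1)}$ denote the law on $\{-1,+1\}^{\Lambda^{(i+1)}}$ obtained by drawing $\sigma_{\Lambda^{(i)}}\sim \mu_{\Lambda^{(i)}}$ and, independently, $\sigma_{v_{i+1)}}$ with probabilities proportional to $e^{\pm h_{v_{i+1}}}$. The Radon-Nikodym derivative of $\hat{\pi}^{(i+1)}$ against $\mu_{\Lambda^{(i+1)}}$ involves only the at most $2d$ coupling terms between $v_{i+1}$ and its neighbors in $\Lambda^{(i)}$, together with a partition-function ratio that expands explicitly as a quotient of the form $2\cosh(h_{v_{i+1}}+\beta S)/2\cosh(h_{v_{i+1}})$ with $|S|\le 2d$. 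A short computation using $\cosh(a+b)/\cosh(a)\in[e^{-|b|},e^{|b|}]$ yields the uniform deterministic bound
\begin{equation*}
M := \sup_{\sigma} \frac{\hat{\pi}^{(i+1)}(\sigma)}{\mu_{\Lambda^{(i+1)}}(\sigma)} \le e^{4d\beta}\,,
\end{equation*}
independent of $i$ and of $h$.

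Next I would apply Theorem~\ref{thm:weak_PI} with confidence level $\delta/N$ to each of the $N$ domains $\Lambda^{(i)}\in \mathcal{Q}_n$ and union bound: with $\P_h$-probability at least $1-\delta$, the weak Poincar\'e inequality, and hence the weak conductance bound~\eqref{eq:large_set_expansion} obtained by specializing to indicator functions, holds simultaneously on every $\Lambda^{(i)}$ with a common constant $A = K(N/\delta)^{1/q} N^{\kappa}$ that is polynomial in $N$ (and in $1/\delta$). Applying Lemma~\ref{lem:warm-start-mixing} with this $A$ and $M=e^{4d\beta}$, the chain started from $\hat{\pi}^{(i+1)}$ is within TV distance $\eps/(2N)$ of $\mu_{\Lambda^{(i+1)}}$ after $k_* = N^{C_*}$ discrete steps, provided $C_*$ is chosen as a polynomial function of $p,q,\kappa$ and $\log(2N/\eps)$. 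The hypothesis $\eps,\delta\gtrsim 1/\textup{poly}(N)$ makes this choice compatible with polynomial running time.

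To close the induction, I would compare the idealized and actual initializations. The actual initialization $[X_*^{(i)},\sigma_{v_{i+1}}]$ differs in TV from $\hat{\pi}^{(i+1)}$ by exactly $\eps_i$, because $\sigma_{v_{i+1}}$ is drawn from the same single-site distribution in both cases and is independent of the remaining coordinates; hence the two joint laws couple optimally by matching this coordinate and using the optimal coupling of $\pi_*^{(i)}$ and $\mu_{\Lambda^{(i)}}$. The data-processing inequality applied to the Markov operator $(P^{(i+1)})^{k_*}$ then yields $\eps_{i+1}\le \eps_i + \eps/(2N)$; iterating from $\eps_1 = 0$ gives $\eps_N\le \eps/2\le \eps$, and the total number of Glauber updates is $N\cdot k_* = N^{C_*+1}$. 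The main delicate point, and the crux of the argument, is the constancy of $M$: it is precisely this uniform $O(1)$ warmness at each incremental step that prevents errors from compounding super-polynomially, and it holds only because the enumeration adds a single vertex at a time so that consecutive Hamiltonians differ by $O(1)$ terms.
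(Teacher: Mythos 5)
Your proposal is correct and follows essentially the same route as the paper's proof: the same induction on $i$ with linearly accumulating TV error, the same warm-start bound $M=e^{4d\beta}$ for the idealized initialization $\mu_{\Lambda^{(i)}}\otimes\mu_{v_{i+1}}$, the same union-bounded application of Theorem~\ref{thm:weak_PI} (via Lemma~\ref{lem:warm-start-mixing}) over the domains $\Lambda^{(i)}\in\mathcal{Q}_n$, and the same coupling/data-processing step to pass from the idealized to the actual initialization. The only differences are cosmetic (an $\eps/(2N)$ rather than $\eps/N$ per-step budget, and a slightly more explicit sketch of the Radon--Nikodym computation).
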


\begin{proof}
Fix $\Lambda_n$ as our target domain and $\eps>0$ a target total-variation distance. Our goal is to establish that as long as $C_*$ is a large enough constant, the sampler described by (1)--(3) above yields 
\begin{align}\label{eq:sampler-inductive-aim}
d_{\sTV}\big(\pi^{(i)}_{k_*}\,,\, \mu_{\Lambda^{(i)}}\big) \le \frac{i \eps}{N}\, ,~~~\mbox{for all } 1\le i \le N \, ,
\end{align}
where $\pi^{(i)}_{k}$ is the distribution of $X^{(i)}_{k}$ and $k_* = N^{C_*}$ as described in the algorithm. 
Taking the final $i=N$ so that $\Lambda^{(i)} = \Lambda_n$, we obtain $d_{\sTV}(\pi^{(N)}_{k_*},\mu_{\Lambda_n}) \le \eps$. We prove this bound inductively. For the base case, we are using a perfect sample from $\mu_{\Lambda^{(1)}}$ so the bound trivially holds. 

Let us assume the bound holds for $i-1$. Then $\pi_{0}^{(i)}$ is within total variation distance $(i-1)\eps/N$ from $\mu_{\Lambda^{(i-1)}}\otimes \mu_{v_i}$, where $\mu_{v_i}(\sigma) \propto e^{ h_{v_i} \sigma}$. Coupling the Glauber dynamics realization $X^{(i)}_k$ initialized from $\pi_0^{(i)}$ with $Y^{(i)}_k$ initialized from $\mu_{\Lambda^{(i-1)}}\otimes \mu_{v_i}$ by using the optimal total variation coupling on the initializations $(X^{(i)}_0,Y^{(i)}_0)$, and then coupling the dynamics with the identity coupling if $X^{(i)}_0 = Y^{(i)}_0$, we obtain
\begin{align}\label{eq:inductive-tv-bound-sampler}
    d_{\sTV}\big(X^{(i)}_k,\mu_{\Lambda^{(i)}}\big) \le d_{\sTV}\big(X_{0}^{(i)},Y_{0}^{(i)}\big) + d_{\sTV}\big(Y_{k}^{(i)},\mu_{\Lambda^{(i)}}\big) \le (i-1)\eps/N + d_{\sTV}\big(Y_{k}^{(i)},\mu_{\Lambda^{(i)}}\big) \, .
\end{align}
The distribution of $Y^{(i)}_0$, $\mu_{\Lambda^{(i-1)}}\otimes \mu_{v_i}$, has Radon--Nikodym derivative to $\mu_{\Lambda^{(i)}}$ of at most $M=e^{4d\beta}$ and thus satisfies the warm start condition~\eqref{eq:warm_start} of Lemma~\ref{lem:warm-start-mixing} since the maximal change in energy from adding interactions between $v_i$ and $\Lambda^{(i-1)}$ is $4d\beta$. Therefore Lemma~\ref{lem:warm-start-mixing} implies 
\begin{align}\label{eq:tvY}
    d_{\sTV}\big(Y^{(i)}_{k}, \mu_{\Lambda^{(i)}}\big) \le M\Big(\frac{A_i^{2p}\log k}{ k}\Big)^{1/(2p-1)} \, ,
\end{align}
with probability $1-\delta$, where $A_i = K \delta^{-1/q} |\Lambda^{(i)}|^{\kappa}$ as per Theorem~\ref{thm:weak_PI} applied to $\Lambda^{(i)}$. By replacing $\delta$ by $\delta/N$ and applying a union bound, the bound in Eq.~\eqref{eq:tvY} holds uniformly over $i$ with probability at least $1-\delta$. By taking $k=N^{C_*}$ for $C_*$ sufficiently large, we obtain $\max_{1 \le i\le N}\, d_{\sTV}(Y^{(i)}_{k} , \mu_{\Lambda^{(i)}})\le \eps/N$ under this event. Combining with~\eqref{eq:inductive-tv-bound-sampler}, we get the claimed bound of~\eqref{eq:sampler-inductive-aim}. 
\end{proof}

\section{Bound on the operator norm of the covariance matrix}
\label{sec:covariance}
This section is devoted to the proof of a tail bound on the operator norm of the covariance matrix of $\nu_t$, Theorem~\ref{thm:trace}. Fix $\Lambda \subset \ZZ^d$ with volume $|\Lambda|=N$.  Recall the notation $A_t = \cov(\nu_t)$ where $(\nu_t)_{t \ge 0}$ is the SL process started at $\nu_0 = \mu_{\Lambda}$; see Section~\ref{sec:preliminaries}.

We will rely on the observation that the correlation between the spins at $u$ and $v$ can be controlled by introducing a boundary separating the two vertices. For a vertex $u \in \Lambda$ and an integer $\ell\ge 0$ we define 
 \begin{equation}\label{eq:def_delta}
\delta_t(u, \ell) := d_{\sTV}\Big(\nu_t\big(\sigma_{u} \in \cdot \,|\, \sigma_{\partial B_{\ell}(u)} = +\big) \,,\, \nu_t\big(\sigma_{u} \in \cdot \,|\, \sigma_{\partial B_{\ell}(u)} = -\big)\Big)\, ,
 \end{equation}
 where we recall that $B_{\ell}(u)$ is the $\ell_{\infty}$-ball of radius $\ell$ around $u$ in $\Lambda$.
 Let us define $\langle \,\cdot\, \rangle_{t}$ to be the Gibbs average with respect to $\nu_t$ and 
\begin{equation*} 
\langle \sigma_u ;\sigma_v \rangle _t := (\cov(\nu_t))_{u,v} = \langle \sigma_u \sigma_v \rangle_t - \langle \sigma_u \rangle _t\langle \sigma_v \rangle_t \,,~~~ u,v \in \Lambda\, .
\end{equation*}

\begin{Lem}\label{lem:cov_inf}
For all $u , v \in \Lambda$, $t\ge0$ and integers  $\ell \le d_{\infty}(u,v)$ we have
\begin{equation*}
0 \le \langle \sigma_u ; \sigma_v\rangle_t \le \delta_t(u,\ell) \, .
\end{equation*}
\end{Lem}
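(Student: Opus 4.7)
Plan: The two inequalities come from two different uses of the FKG inequality, both relying on the key observation that $\nu_t$ is itself a ferromagnetic $\RFIM$. Indeed, by Proposition~\ref{prop:sl_bayes} together with \eqref{eq:tilt}, $\nu_t$ has the same couplings $\beta$ as $\mu_\Lambda$ and only the external field is tilted from $h$ to $h+y_t$. In particular $\nu_t$ satisfies FKG, and since $\sigma\mapsto\sigma_u$ and $\sigma\mapsto\sigma_v$ are coordinate projections (hence nondecreasing), the lower bound $\langle\sigma_u;\sigma_v\rangle_t=\cov_{\nu_t}(\sigma_u,\sigma_v)\ge 0$ is immediate.

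For the upper bound I would condition on the boundary values $\tau:=\sigma_{\partial B_\ell(u)}$. Because $\ell\le d_\infty(u,v)$, the vertex $v$ lies in $\Lambda\setminus B_\ell(u)$, so the domain Markov property of $\nu_t$ makes $\sigma_u$ and $\sigma_v$ conditionally independent given $\tau$. Setting
\[
g_u(\tau):=\langle\sigma_u\rangle_t^\tau,\qquad g_v(\tau):=\langle\sigma_v\rangle_t^\tau,
\]
the tower property plus conditional independence then gives
\[
\langle\sigma_u;\sigma_v\rangle_t \;=\; \cov_{\nu_t}\bigl(g_u(\tau),g_v(\tau)\bigr).
\]

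Next I bound this last covariance by FKG monotonicity plus Cauchy--Schwarz. FKG applied to the conditional measures $\mu_{B_\ell(u)}^\tau$ (obtained from $\nu_t$) shows that $\tau\mapsto g_u(\tau)$ and $\tau\mapsto g_v(\tau)$ are both nondecreasing, so in particular $g_u(-)\le g_u(\tau)\le g_u(+)$. Because $\sigma_u$ takes values in $\{-1,+1\}$, the standard identity relating total variation to expectations yields $g_u(+)-g_u(-)=2\delta_t(u,\ell)$ straight from the definition \eqref{eq:def_delta}. Popoviciu's variance-range inequality then gives $\var_{\nu_t}(g_u(\tau))\le(g_u(+)-g_u(-))^2/4=\delta_t(u,\ell)^2$, while the conditional variance identity yields $\var_{\nu_t}(g_v(\tau))\le\var_{\nu_t}(\sigma_v)\le 1$. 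Cauchy--Schwarz closes the argument:
\[
\cov_{\nu_t}\bigl(g_u(\tau),g_v(\tau)\bigr)\;\le\;\sqrt{\var_{\nu_t}(g_u)\,\var_{\nu_t}(g_v)}\;\le\;\delta_t(u,\ell).
\]

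There is no real obstacle here — everything is elementary once one notices that $\nu_t$ remains a ferromagnetic $\RFIM$ so that FKG and the domain Markov property are available. The only bookkeeping point is ensuring the conventions on $B_\ell(u)$ and its exterior vertex boundary $\partial B_\ell(u)$ give a Markov cut that separates $u$ from $v$ under $\ell\le d_\infty(u,v)$; modulo a possible off-by-one this is routine.
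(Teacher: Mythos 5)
Your proof is correct, and it uses exactly the two ingredients the paper's (citation-only) proof points to: FKG positivity for the lower bound, and FKG monotonicity of $\langle\sigma_u\rangle_t$ in the boundary condition on a surface separating $u$ from $v$ for the upper bound. The only divergence is in how the upper bound is closed. The standard argument behind the paper's citation conditions on $\sigma_v$ itself and uses the identity, valid for $\pm1$-valued spins, $\cov_{\nu_t}(\sigma_u,\sigma_v)=2\,\nu_t(\sigma_v{=}1)\,\nu_t(\sigma_v{=}-1)\bigl(\langle\sigma_u\rangle_t^{\sigma_v=+}-\langle\sigma_u\rangle_t^{\sigma_v=-}\bigr)$, then dominates the conditioning on $\sigma_v=\pm$ by the conditioning on $\sigma_{\partial B_\ell(u)}=\pm$ via FKG and the separation. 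You instead condition on the full separating surface $\tau=\sigma_{\partial B_\ell(u)}$ and close with the law of total covariance, Popoviciu's range--variance bound, and Cauchy--Schwarz; each step checks out ($g_u(-)\le g_u(\tau)\le g_u(+)$ by monotonicity, $g_u(+)-g_u(-)=2\delta_t(u,\ell)$ for binary spins, $\var_{\nu_t}(g_v(\tau))\le\var_{\nu_t}(\sigma_v)\le 1$), and nothing is lost since the factor $2p(1-p)\le\tfrac12$ from the standard route is not needed to reach the stated bound. Your flagged off-by-one is real but not yours to fix: with $B_\ell(u)=\{w:d_\infty(u,w)\le\ell\}$ and $\partial$ the exterior vertex boundary, the endpoint case $\ell=d_\infty(u,v)$ puts $v$ inside the ball; the paper's statement carries the same imprecision, and in all its applications (e.g.\ the trace bound \eqref{eq:trace_TV}) one may simply take $\ell$ strictly smaller, so this is harmless.
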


\begin{proof}
The proof is standard and follows from monotonicity of the magnetization of $\sigma_u$ with respect to the boundary condition (a consequence of the $\FKG$ inequality) and the fact that $\partial B_{\ell}(u)$ separates $u$ from $v$ in $\Lambda$. See e.g., \cite[Corollary 1.3]{ding2022new} for details. 
\end{proof}

It follows from Lemma~\ref{lem:cov_inf} that
\begin{equation}    \label{eq:trace_TV}
    \tr(A_t^p) = \sum_{u_1, \dots, u_p \in \Lambda} \prod_{i=1}^p \, \langle \sigma_{u_i};\sigma_{u_{i-1}}\rangle _{t}
    \leq \sum_{u_1, \dots, u_p \in \Lambda_n} \prod_{i=1}^p \, \delta_t(u_i,\ell_i)\, ,
\end{equation}
where $u_{0}=u_1$ and $\ell_i \le d_{\infty}(u_i,u_{i-1})$.
Next we control the expectation of the right-hand side in Eq.~\eqref{eq:trace_TV} uniformly in $t$ via the next proposition:
\begin{Prop}\label{prop:bd_product}
Fix $p \ge 1$, $t\ge 0$, a subset $A \subseteq [p]$ and a $p$-tuple of vertices $u_1,\cdots,u_p \in \Lambda$.  
For any sequence of integers $(\ell_i)_{i \in A}$ such that
 \begin{align}\label{eq:cond_elli}
 d_{\infty}(u_i,u_{i-1}) \ge \ell_i \,, ~~~ i  \in A \, ,
 ~~~\mbox{and}~~~~ 
 d_{\infty}(u_i,u_{j}) \ge 2(\ell_i+\ell_j) \,,~~~ i \neq j \in A \, ,
  \end{align}
we have 
   \begin{equation}\label{eq:bd_product}
 \EE\Big[\,\prod_{i \in A} \delta_t(u_i,\ell_i) \, \Big| \, h\Big] \le \prod_{i \in A} \Big(\sum_{v \in B_{\ell_i}(u_i)} (1+\one_{v=u_i})\, \delta_0(v,\ell_i)\Big)\, .
 \end{equation}
\end{Prop}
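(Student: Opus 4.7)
My plan is to combine two ingredients: (a) a per-factor drift estimate controlling $\EE[\delta_t(u,\ell)\,|\,h]$ by a weighted sum of the time-$0$ influences $\delta_0(v,\ell)$, $v\in B_\ell(u)$, and (b) a decoupling argument factoring the expectation of the product over $i\in A$ using the Bayesian representation of Proposition~\ref{prop:sl_bayes}.

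For (a), I would first observe that by the spatial Markov property of $\nu_t$, the conditional measure $\nu_t(\sigma_{B_\ell(u)}\,|\,\sigma_{\partial B_\ell(u)}=\tau)$ equals the exponential tilt of $\mu_{B_\ell(u)}^\tau$ by $e^{\langle y_t|_{B_\ell(u)},\,\cdot\,\rangle}$, where $(y_t)$ is the driving field of $\SL$. Thus $\delta_t(u,\ell)$ is a deterministic function of $y_t|_{B_\ell(u)}$ and $h|_{B_\ell(u)}$ alone. By $\FKG$ we have $\delta_t(u,\ell)=\tfrac12(m_t^+-m_t^-)\ge 0$, with $m_t^\pm$ the conditional $\sigma_u$-magnetization under the $\pm$-boundary measure. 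I would then apply It\^o's formula to $m_t^+-m_t^-$ along the SDE~\eqref{eq:sde_sl}, and, in the spirit of the Dirichlet-form supermartingale computation of~\cite[Lemma~9]{EKZ}, express the drift in terms of covariances $\cov_{\nu_t^\pm}(\sigma_u,\sigma_v)$ for $v\in B_\ell(u)$. Combining $\FKG$ (to sign these covariances) with Lemma~\ref{lem:cov_inf} (to reduce each covariance to a TV influence at time~$0$), I expect to obtain after integration
\begin{equation*}
\EE\big[\delta_t(u,\ell)\,\big|\,h,\,\sigma^*|_{\partial B_\ell(u)}\big]\;\le\;\sum_{v\in B_\ell(u)}(1+\one_{v=u})\,\delta_0(v,\ell),
\end{equation*}
uniformly in the boundary pinning. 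The extra $\one_{v=u}$ reflects the initial value $\delta_0(u,\ell)$ at $t=0$, while the remaining sum captures the integrated drift contribution.

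For (b), I would invoke the Bayesian representation: $y_t\stackrel{d}{=}t\sigma^*+B_t$ with $\sigma^*\sim\mu_\Lambda$ and $(B_t)$ an independent Brownian motion. The separation $d_\infty(u_i,u_j)\ge 2(\ell_i+\ell_j)$ makes the closed neighborhoods $B_{\ell_i}(u_i)\cup\partial B_{\ell_i}(u_i)$ pairwise disjoint, so the Brownian restrictions $B_t|_{B_{\ell_i}(u_i)}$ are mutually independent. Conditioning further on the family $\{\sigma^*|_{\partial B_{\ell_i}(u_i)}\}_{i\in A}$ and invoking the spatial Markov property of $\mu_\Lambda$ makes the samples $\sigma^*|_{B_{\ell_i}(u_i)}$ conditionally independent across $i$. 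Since $\delta_t(u_i,\ell_i)$ is a function of $(\sigma^*|_{B_{\ell_i}(u_i)},B_t|_{B_{\ell_i}(u_i)})$ and $h|_{B_{\ell_i}(u_i)}$ alone, the $\delta_t(u_i,\ell_i)$'s become conditionally independent as well, so that
\begin{equation*}
\EE\Big[\prod_{i\in A}\delta_t(u_i,\ell_i)\,\Big|\,h\Big]=\EE\Big[\prod_{i\in A}\EE\big[\delta_t(u_i,\ell_i)\,\big|\,h,\,\sigma^*|_{\partial B_{\ell_i}(u_i)}\big]\,\Big|\,h\Big].
\end{equation*}
Applying the uniform per-factor bound of (a) to each factor on the right yields the claimed product bound.

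The main obstacle is the per-factor drift estimate in (a). The decoupling in (b) is conceptually clean once the Bayesian/Markov structure is set up, but establishing that the It\^o drift of $m_t^+-m_t^-$ integrates in time to the \emph{time-$0$} quantities $\sum_{v\in B_\ell(u)}\delta_0(v,\ell)$, rather than to their time-$t$ analogues, is the delicate point; this requires a careful combination of $\FKG$ to sign the drift terms and repeated use of Lemma~\ref{lem:cov_inf} to reduce all occurrences of $\cov_{\nu_t^\pm}$ back to influences at time zero.
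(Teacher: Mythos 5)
Your overall architecture (Bayesian representation of $\SL$ to decouple the factors across the well-separated balls, plus an $\FKG$-based time-monotonicity to reduce each factor to time $0$) matches the paper's, but there is a genuine gap at the point where the two ingredients meet. Your factorization in (b) is an exact identity, so it forces the per-factor estimate in (a) to hold \emph{conditionally on the boundary pinning} $\sigma^*|_{\partial B_\ell(u)}=\tau$, uniformly in $\tau$. The supermartingale computation you invoke (the analogue of Lemma~\ref{lem:decreasingcorr}) is carried out for the \emph{unconditional} $\SL$ process, whose drift is the barycenter of the full tilted measure; once you condition on $\sigma^*|_{\partial B_\ell(u)}$, the effective drift of $y_t|_{B_\ell(u)}$ changes (it becomes the barycenter of the $\tau$-pinned measure tilted by $y_t$), and the nonpositivity of the It\^o drift of $m_t^+-m_t^-$ has to be re-derived for that modified dynamics. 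Your sketch does not address this, and moreover mislocates the origin of the $\sum_{v\in B_\ell(u)}\delta_0(v,\ell)$ term: Lemma~\ref{lem:cov_inf} converts covariances into \emph{time-$t$} influences, not time-$0$ ones, so it cannot by itself ``integrate the drift to time-$0$ quantities.'' If the conditional supermartingale property did hold uniformly in $\tau$, you would in fact get the stronger bound $\prod_i\delta_0(u_i,\ell_i)$ with no sum over $v$ at all --- a sign that the sum must be entering somewhere other than the drift.

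In the paper the sum over $v$ comes precisely from the decoupling error that your identity in (b) declares to be zero. The paper sets $X_i=\E[\delta_t(u_i,\ell_i)\mid\sigma^*,h]$, a function of $(\sigma^*,h)|_{B_{\ell_i}(u_i)}$, and peels off one factor at a time: it writes $\nu_0(\prod_i X_i)-\nu_0(X_1)\,\nu_0(\prod_{i\ge2}X_i)$ as a covariance across the boundary $\partial B_{2\ell_1}(u_1)$, and bounds it by the oscillation of $\tau\mapsto\nu_0(X_1\mid\sigma^*_{\partial B_{2\ell_1}(u_1)}=\tau)$ times $\nu_0(\prod_{i\ge2}X_i)$. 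By monotonicity, the union bound of Lemma~\ref{lem:settoboundary}, and the inclusion $B_{\ell_1}(v)\subseteq B_{2\ell_1}(u_1)$ for $v\in B_{\ell_1}(u_1)$, that oscillation is at most $\sum_{v\in B_{\ell_1}(u_1)}\delta_0(v,\ell_1)$; the unconditional term $\nu_0(X_1)$ is bounded by $\delta_0(u_1,\ell_1)$ via the (unconditional) supermartingale Lemma~\ref{lem:decreasingcorr}, and the two contributions together produce the $(1+\one_{v=u_1})$ weight. This is also why the separation hypothesis is $d_\infty(u_i,u_j)\ge 2(\ell_i+\ell_j)$ rather than the weaker disjointness of the closed $\ell_i$-balls that your argument would need: the radius-$2\ell_1$ ball around $u_1$ must avoid the other $B_{\ell_i}(u_i)$. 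To repair your write-up, either adopt this peeling scheme, or else supply the missing conditional supermartingale argument (identifying the conditioned driving process as the $\SL$ process of the pinned measure via Proposition~\ref{prop:sl_bayes} and re-running the $\FKG$ drift computation with the $\tau$-boundary field absorbed into $h$), in which case you should also say explicitly that the $\sum_v$ weight is then not needed.
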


It follows from Eq.~\eqref{eq:trace_TV},  the fact $0 \le \delta_t(u_i,\ell_i) \le 1$, and Proposition~\ref{prop:bd_product} that 
 \begin{equation}\label{eq:bd_trace_gamma}
 \sup_{t\ge 0} \, \EE \big[\tr(A_t^p) \,\big|\, h \big] \le \Gamma(A,L) \, ,
 \end{equation}
 where the quantity in the right-hand side is defined as follows:
For a map $A : (u_1,\cdots,u_p) \in \Lambda_n^p \mapsto A(u_1,\cdots,u_p) \subseteq [p]$ from the set of $p$-tuples of vertices to the subsets of $[p]$, and a map $L : (u_1,\cdots,u_p) \in \Lambda_n^p \mapsto (\ell_1,\cdots,\ell_p) \in \mathbb{N}^p$, $\ell_i = (L(u_1,\cdots,u_p))_i$, $i \in [p]$, 
we let 
 \begin{equation}\label{eq:def_gamma}
\Gamma(A,L) := \sum_{u_1, \dots, u_p \in \Lambda} \, \prod_{i \in A(u_1,\cdots,u_p)} \Big(\sum_{v \in B_{\ell_i}(u_i)} (1+\one_{v=u_i})\, \delta_0(v,\ell_i)\Big)\, .
 \end{equation}
 
  Next we observe that the collections $(\delta_0(v,\ell_i))_{v \in B_{\ell_i}(u_i)}$ and $(\delta_0(v,\ell_j))_{v \in B_{\ell_j}(u_j)}$ are independent for $i \neq j \in A(u_1,\cdots,u_p)$. This is true since these collections depend on the fields $(h_{x})_{x\in B(u_k,2\ell_k)}$ for $k = i,j$ respectively, and the balls $B(u_i,2\ell_i)$ are disjoint due to~\eqref{eq:cond_elli}. By $\WSM$ the term inside the parentheses in~\eqref{eq:def_gamma} is bounded by $C\ell_i^d e^{-c\ell_i}$ in expectation. Then the combinatorial sum is dealt with via a judicious choice of $A$ and the distances $\ell_i$:
 
\begin{Prop}\label{prop:bound_gamma} 
 Suppose the $\RFIM$ on $\Lambda$ satisfies $\WSM(C)$ for some $C>0$. There exists two maps $A$ and $L$ as defined above and satisfying the conditions of Eq.~\eqref{eq:cond_elli}, and a constant $C_0 = C_0(d,C)$ such that 
 \begin{equation}\label{eq:bound_gamma}
\E_h[\Gamma(A,L)] \le C_0^p \, p! \, N \, .
 \end{equation}
 \end{Prop}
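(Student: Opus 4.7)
The plan is to construct $A$ and $L$ so that (i) the factors inside the product in~\eqref{eq:def_gamma} become jointly independent under $\P_h$ and hence the expectation factorizes, (ii) each factor is controlled pointwise via $\WSM(C)$, and (iii) the resulting deterministic sum over tuples can be bounded by a charging argument, with the $p!$ factor emerging from a combinatorial accounting.

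\textbf{Construction and factorization.} For a $p$-tuple $(u_1,\ldots,u_p)\in\Lambda^p$, let $d_i := d_\infty(u_i, u_{i-1})$ with the cyclic convention, set $\ell_i := \lfloor d_i/c\rfloor$ for a large dimension-dependent constant $c$, and define $A$ greedily by processing the indices in order of decreasing $d_i$ and including $i\in A$ precisely when $B_{2\ell_i}(u_i)$ is disjoint from every $B_{2\ell_j}(u_j)$ with $j$ already in $A$. Both conditions of~\eqref{eq:cond_elli} then hold by construction. Since $\delta_0(v, \ell_i)$ depends only on $h$ restricted to $B_{\ell_i}(v) \subset B_{2\ell_i}(u_i)$, disjointness implies independence of the inner sums $X_i := \sum_{v\in B_{\ell_i}(u_i)}(1+\one_{v=u_i})\delta_0(v, \ell_i)$ for $i \in A$, so $\E\prod_{i\in A}X_i = \prod_{i\in A}\E X_i$. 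Applying $\WSM(C)$ together with $|B_{\ell_i}(u_i)|\leq C_d \ell_i^d$ yields $\E X_i \leq C_1\ell_i^d e^{-\ell_i/C}$, hence
\[\E[\Gamma(A,L)] \leq \sum_{u_1,\ldots,u_p} \prod_{i \in A(u_1,\ldots,u_p)} C_1 \ell_i^d e^{-\ell_i/C}.\]

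\textbf{Summation and emergence of $p!$.} To bound this deterministic sum, fix $u_1$ (contributing $N$) and traverse the cycle. For every $i \in A$, the sum $\sum_{u_i\in\Lambda} C_1 d_i^d e^{-d_i/(cC)}$ at fixed $u_{i-1}$ converges to a constant $C_2$ by geometric decay of $\sum_{v \in \ZZ^d} |v|^d e^{-|v|/(cC)}$. For every $i \notin A$, the greedy's rejection rule forces $u_i$ to lie inside some $B_{2\ell_j}(u_j)$ with $j \in A$ added earlier (and hence with $d_j \geq d_i$), so the sum over $u_i$ contributes at most $O(\ell_j^d)$. Charging each non-$A$ index to such a $j \in A$, the polynomial factors $\ell_j^d$ that accumulate at each $j$ are absorbed into the exponential $e^{-\ell_j/C}$ at a multiplicative cost of constants raised to the $p$-th power. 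A combinatorial accounting of the interleaving of $A$ and $[p]\setminus A$ along the cycle and the routing of non-$A$ indices to their $A$-representatives yields a factor of order $p^{O(p)} \leq e^{O(p)}\cdot p!$ by Stirling, giving the claimed bound $C_0^p p! N$.

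\textbf{Main obstacle.} The principal difficulty is quantifying the charging step. One must verify that for a given $j \in A$, the number $k_j$ of non-$A$ indices charged to $j$, together with the polynomial factor $\ell_j^{d\cdot k_j}$ it produces, remains absorbable by the exponential $e^{-\ell_j/C}$. This forces the constant $c$ in the construction to be chosen sufficiently large, and relies on a $\ZZ^d$-packing argument to bound how many non-$A$ indices can crowd into a single ball $B_{2\ell_j}(u_j)$. Confirming that the total combinatorial cost of the routings does not exceed $p!$ up to the constant-to-the-$p$ overhead---rather than a worse super-exponential---is the most delicate point of the proof.
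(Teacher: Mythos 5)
Your first two steps---choosing $A$ so that the balls $B_{2\ell_i}(u_i)$, $i\in A$, are pairwise disjoint, factorizing the expectation over the independent fields in those balls, and invoking $\WSM(C)$ to get $\E X_i\le C_1\ell_i^d e^{-\ell_i/C}$---match the paper's proof. The gap is in the summation/charging step, and it is exactly at the point you flag as "most delicate." In your scheme an index $i\notin A$ contributes no decay factor of its own: summing $u_i$ over the ball $B_{O(\ell_j)}(u_j)$ of its representative $j\in A$ costs a raw volume factor $O(\ell_j^d)$, and if $k_j$ indices are charged to $j$ the resulting sum over $u_j$ is of order $\sum_{\ell\ge 1}\ell^{\,d(k_j+1)+d-1}e^{-\ell/C}\asymp \big(d(k_j+1)\big)!\,C^{d(k_j+1)}$. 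Since nothing prevents all $p-|A|$ rejected indices from crowding into a single $B_{2\ell_j}(u_j)$ (take one huge consecutive gap $d_j$ and cluster everything else inside), the product over $j\in A$ can be as large as $(dp)!\,C^{O(p)}\asymp C^p\,(p!)^d$. Your closing assertion that the accumulated factor is "$p^{O(p)}\le e^{O(p)}\cdot p!$" is false once the constant in the exponent exceeds $1$: e.g.\ $p^{2p}\asymp e^{2p}(p!)^2\not\le e^{O(p)}p!$. So the route as described yields at best $C_0^p (p!)^d N$, which would degrade the tail in Theorem~\ref{thm:trace} from $Ne^{-c_0R}$ to $Ne^{-cR^{1/d}}$ and weaken everything downstream.

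The paper's proof avoids this by making sure \emph{every} index $i\in[p]$, not just those in $A$, ends up with its own decay factor. It sets $r_i=\tfrac14\min_{j<i}d_\infty(u_i,u_j)$ with nearest-previous-neighbor pointer $J_i$, groups indices into dyadic scale classes $Q_k=\{i: r_i\in[2^k,2^{k+1})\}$, and takes $A=Q_{k_*}$ for the scale maximizing $|Q_k|2^k$. This single-scale selection guarantees both the separation condition~\eqref{eq:cond_elli} with $\ell_i\ge r_i/2$ on $A$, and the key redistribution inequality $\sum_{i\in A}r_i\ge c_0\sum_{i\in[p]}\sqrt{r_i}$, so that $e^{-\sum_{i\in A}\ell_i/2C}\le\prod_{i\in[p]}e^{-c\sqrt{r_i}}$. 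The sum over tuples then factorizes with each $u_i$ contributing $r_i^{d-1}e^{-c\sqrt{r_i}}$ (summable in $r_i$), and the $p!$ arises solely from the at most $p!$ choices of the pointers $(J_i)$. If you want to rescue a greedy/charging argument, you would need some analogous mechanism that trades part of the exponential decay of the selected indices for a per-index decay factor on the rejected ones; without it the $\ell_j^{dk_j}$ pile-up is fatal.
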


Theorem~\ref{thm:trace} follows immediately from Eq.~\eqref{eq:bd_trace_gamma} and Proposition~\ref{prop:bound_gamma}.
It remains to prove Proposition~\ref{prop:bd_product} and Proposition~\ref{prop:bound_gamma}.
Behind the construction leading to the bound~\eqref{eq:bound_gamma} is a desire to optimize the tradeoff between the number of terms $\delta_t(u_i,\ell_i)$ we keep in the product appearing in~\eqref{eq:trace_TV} and how large we are allowed to take the radii $\ell_i$. The larger the $\ell_i$'s, the smaller these individual terms are in expectation, but the fewer of them we can have in a finite domain $\Lambda$ because of the separation constraints~\eqref{eq:cond_elli}. 
We detail a specific construction in the proof of Proposition~\ref{prop:bound_gamma} which mimics a peeling procedure: Each vertex $(u_i)_{i\ge 2}$ contributes by a constant to the overall sum~\eqref{eq:trace_TV} due to $\WSM$, leading to the term $C_0^p$ in the final bound, and $u_1$ contributes by $|\In|=N$. The term $p!$ arises from all possible orderings in this peeling.

Next, in the proof of Proposition~\ref{prop:bd_product} we crucially rely on a monotonicity property with respect to $t$ of the expected total variations $\E[\delta_t(u,\ell)\,|\, h]$. More precisely, the process $t \mapsto \delta_t(u,\ell)$ is a super-martingale. As we will see, this is a consequence of the $\FKG$ inequality. We state and prove the result in a slightly more general setting:    

Let $G=(V,E)$ be a finite graph, and fix an arbitrary external field $h=(h_u)_{u\in V}$. We let
  \begin{equation*}
  \mu(\sigma) \propto \exp \Big( \sum_{(u,v) \in E} J_{uv} \sigma_u \sigma_v + \sum_{u \in V} h_u \sigma_u \Big) \, ,  
  \end{equation*}
  where $J_{uv} \ge 0$ for all $(u,v) \in E$. We make the dependence of the external field explicit by writing $\langle \, \cdot \, \rangle_{h}$ as the Gibbs average when the external field vector is $h$.
 Note that the Gibbs average with respect to the corresponding stochastically localized measure is $\langle \, \cdot \, \rangle_{h+y_t}$. 
 
\begin{Lem}\label{lem:decreasingcorr}
Consider an auxiliary field $g \in \R^V_+$ with nonnegative entries, and consider averages of the form 
 \begin{equation}
 X^{\pm}_t := \langle f(\sigma)\rangle_{y_t + h \pm g} \, ,
  \end{equation}
where $f : \{-1,+1\}^V \to \R$ is an increasing function.
Then $X^+$ is a super-martingale and $X^-$ is a sub-martingale.
 
 In particular, letting $f(\sigma) = \sigma_u$ and $g_u = +\infty$ on the boundary vertices and $g_u = 0$ on the remaining vertices, we obtain
  \begin{equation} \label{eq:decreasingcorr}
  \E  \big[\delta_t(u,\ell) \big] \le \delta_0(u,\ell) \, .
  \end{equation}
\end{Lem}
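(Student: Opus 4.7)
The plan is to write $X^\pm_t = F^\pm(y_t)$ with $F^\pm(y) := \langle f(\sigma)\rangle_{y+h\pm g}$, compute the Ito drift of each process, show the drifts have the claimed signs using two FKG inequalities, and then read off \eqref{eq:decreasingcorr} from monotonicity in the boundary condition.

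A direct differentiation of the Gibbs average gives $\partial_{y_v}F^+(y)=\langle f;\sigma_v\rangle_{y+h+g}$ and $\partial^2_{y_uy_v}F^+(y)=\langle f;\sigma_u;\sigma_v\rangle_{y+h+g}$, the latter being the third joint cumulant. Using $\sigma_v^2=1$, the diagonal second derivatives collapse to $\partial^2_{y_vy_v}F^+(y) = -2\langle\sigma_v\rangle_{y+h+g}\,\langle f;\sigma_v\rangle_{y+h+g}$. Plugging into Ito's formula with $\dd y_t = a_t\,\dd t + \dd B_t$ and $a_t = \langle\sigma\rangle_{y_t+h}$ (crucially, the barycenter is taken \emph{without} the $g$-shift, since $(y_t)$ is the SL process attached to $\nu_0$), the finite variation part of $\dd X^+_t$ equals
\[\sum_{v\in V} \langle f;\sigma_v\rangle_{y_t+h+g}\,\bigl(\langle\sigma_v\rangle_{y_t+h} - \langle\sigma_v\rangle_{y_t+h+g}\bigr)\,\dd t.\]

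Since $f$ and $\sigma_v$ are both increasing, FKG gives $\langle f;\sigma_v\rangle_{y+h+g}\ge 0$. Monotonicity of the magnetization in the external field (again FKG, using ferromagneticity of the couplings) combined with $g\ge 0$ yields $\langle\sigma_v\rangle_{y+h+g}\ge\langle\sigma_v\rangle_{y+h}$. Every summand in the drift of $X^+_t$ is therefore $\le 0$, so $X^+_t$ is a super-martingale; repeating the argument with $-g$ in place of $+g$ flips only the second FKG step and shows $X^-_t$ is a sub-martingale. For \eqref{eq:decreasingcorr}, set $f(\sigma)=\sigma_u$ and take $g_v=+\infty$ for $v\in\partial B_\ell(u)$ and $g_v=0$ otherwise; FKG (monotonicity of the magnetization in the boundary condition) gives $X^+_t\ge X^-_t$ and hence $\delta_t(u,\ell) = \tfrac12(X^+_t - X^-_t)$. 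Because the drift of $X^+_t$ is $\le 0$ and that of $X^-_t$ is $\ge 0$, the difference $X^+_t - X^-_t$ is itself a super-martingale, and taking expectations yields $\EE[\delta_t(u,\ell)]\le\delta_0(u,\ell)$.

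The only real subtlety is accommodating the infinite entries of $g$ in the application to $\delta_t$. The cleanest route is to first establish the super/sub-martingale statement for finite $g\in\R^V_+$ and then pass to the limit $g_v\to+\infty$ using continuity of Gibbs averages in $g$; equivalently one may redo the Ito+FKG computation directly for the Ising measure on the bulk with $\pm$ pinning on $\partial B_\ell(u)$, which is again of Ising type. Beyond this, the proof is only two applications of FKG together with the stochastic calculus already used elsewhere in the paper.
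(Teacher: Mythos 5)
Your proposal is correct and follows essentially the same route as the paper: both arrive at the identical drift $\sum_{v}\langle f;\sigma_v\rangle_{y_t+h+g}\bigl(\langle\sigma_v\rangle_{y_t+h}-\langle\sigma_v\rangle_{y_t+h+g}\bigr)\,\dd t$ and then apply the same two FKG inequalities; the only difference is that you apply It\^{o}'s formula to $F^{+}(y_t)$ (using $\sigma_v^2=1$ to collapse the second-order term) while the paper differentiates the normalized weights $w_t(\sigma)$ directly, which is an equivalent computation. Your extra remark about handling the infinite entries of $g$ by a limiting argument is a point the paper glosses over, and is a welcome addition.
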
 

\begin{proof}
 We let 
 \begin{align*}
 \bar{w}_t(\sigma) &=  \exp\Big(\sum_{(u,v) \in E} J_{uv} \sigma_u \sigma_v + \sum_{u \in V} (y_t + h + g)_u \, \sigma_u \Big) \, ,\\ 
 Z_t &= \sum_{\sigma \in \{\pm 1\}^V} \bar{w}_t(\sigma) \, ,~~~  \text{and}~~~ w_t(\sigma) = \frac{1}{Z_t} \bar{w}_t(\sigma) \, . 
 \end{align*}
 By It\^{o}'s formula we have
 \[\rmd \bar{w}_t(\sigma) = \bar{w}_t(\sigma)  \Big( \sigma^\top  \rmd y_t + \frac{1}{2}\|\sigma\|^2 \rmd t \Big) \, , \]
 and 
 \begin{align*}
\rmd w_t(\sigma) &=  \frac{\rmd \bar{w}_t(\sigma)}{Z_t} + \bar{w}_t(\sigma) \rmd Z_t^{-1} +  \rmd [\bar{w}_t, Z_t^{-1}] \, ,\\
\rmd Z_t^{-1} &= - \frac{\rmd Z_t}{Z_t^2} + \frac{\rmd [Z_t]}{Z_t^3} \, , ~~~\mbox{and}~~~
\rmd [\bar{w}_t, Z_t^{-1}] = - \frac{\rmd [\bar{w}_t,Z_t]}{Z_t^2} \, .
\end{align*}
Therefore,
\begin{align*}
\hspace{-.5cm} \rmd w_t(\sigma) 
&=  w_t(\sigma)  
\Big( \sigma^\top  \rmd y_t + \frac{n}{2} \rmd t - \sum_{\tau} w_t(\tau)(\tau^\top  \rmd y_t + \frac{n}{2} \rmd t ) \\
& \hspace{7cm}
 +  \Big\|\sum_{\tau} w_t(\tau)  \tau \Big\|^2 \rmd t
-  \sum_{\tau} w_t(\tau)  \tau^\top \sigma \rmd t\Big) \nonumber\\
&=  w_t(\sigma)  \Big( \sigma - \sum_{\tau} w_t(\tau)\tau \Big)^\top \Big(\rmd y_t - \sum_{\tau} w_t(\tau)\tau \rmd t\Big)\nonumber\\
&=  w_t(\sigma)  \Big( \sigma - \langle \sigma \rangle_{y_t + h + g} \Big)^\top \Big(\langle \sigma \rangle_{y_t + h} - \langle \sigma \rangle_{y_t + h + g} \Big)\rmd t +  w_t(\sigma)  \Big( \sigma - \sum_{\tau} w_t(\tau)\tau \Big)^\top \rmd B_t \, ,
\end{align*}     
where the last line plugged in~\eqref{eq:sde_sl} for $\dd y_t$. 
So we obtain
\begin{equation*}
\rmd X^+_t =  \Big( \langle f(\sigma) ; \sigma \rangle_{y_t + h + g} \Big)^\top \Big(\langle \sigma \rangle_{y_t + h} - \langle \sigma \rangle_{y_t + h + g} \Big)\rmd t + b_t^\top\rmd B_t \, ,
\end{equation*}
where we recall that $ \langle f ; g\rangle  =  \langle f g\rangle -  \langle f \rangle  \langle g\rangle $, and $b_t$ is some random vector in $\R^V$.

By the $\FKG$ inequality, since $g$ has nonnegative entries, $\langle \sigma \rangle_{y_t + h} \le  \langle \sigma \rangle_{y_t + h + g}$  entrywise and $\langle f(\sigma) ; \sigma \rangle_{y_t + h + g} \ge 0$ entrywise since $f$ is increasing. We conclude that the drift term in the above equation is nonpositive and therefore that $X^+_t$ is a super-martingale. 

Replacing $g$ by $-g$ in the above does not affect the positivity of the correlation $\langle f(\sigma) ; \sigma \rangle_{y_t + h - g}$, but the vector $\langle \sigma \rangle_{y_t + h} - \langle \sigma \rangle_{y_t + h - g}$ is now nonnegative. Therefore $X^-_t$ is a sub-martingale. 
\end{proof}

Before turning to the proof of Proposition~\ref{prop:bd_product} we first state the following  simple lemma.

\begin{Lem}\label{lem:settoboundary}
 Consider two subsets of vertices $\Delta \subset \Delta' \subseteq V$, 
 \begin{align}
  d_{\sTV} \big( \mu(\sigma_\Delta \in \, \cdot \, | \, \sigma_{\partial \Delta'} =+) , &  \mu(\sigma_\Delta \in \, \cdot \, | \, \sigma_{\partial \Delta'} =-)  \big) \nonumber\\
  & \le \sum_{v \in \Delta}  d_{\sTV} \big( \mu(\sigma_v \in \, \cdot \, | \, \sigma_{\partial \Delta'} =+) , \mu(\sigma_v \in \, \cdot \, | \, \sigma_{\partial \Delta'} =-)  \big) \label{eq:settoboundary} \, .
 \end{align}
\end{Lem}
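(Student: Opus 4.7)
The plan is to prove this by the standard monotone coupling argument, which is entirely driven by FKG and the fact that single-site marginals on a binary alphabet are optimally coupled by any monotone coupling.

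First, I would fix the two conditional measures $\mu^+ := \mu(\sigma_{\Delta'} \in \cdot \mid \sigma_{\partial \Delta'} = +)$ and $\mu^- := \mu(\sigma_{\Delta'} \in \cdot \mid \sigma_{\partial \Delta'} = -)$ on $\{-1,+1\}^{\Delta'}$, and introduce the monotone (grand) coupling $\mathbf{P}$ recalled in Section~\ref{sec:preliminaries}: this produces $(\sigma^+, \sigma^-)$ with the correct marginals and, by the FKG inequality together with $+ \ge -$ on $\partial \Delta'$, satisfies $\sigma^+ \ge \sigma^-$ pointwise almost surely. By the standard coupling characterization of total variation, the left-hand side of~\eqref{eq:settoboundary} is at most
\begin{equation*}
\mathbf{P}\big(\sigma^+_\Delta \neq \sigma^-_\Delta\big) \le \sum_{v \in \Delta} \mathbf{P}\big(\sigma^+_v \neq \sigma^-_v\big),
\end{equation*}
the last inequality being a union bound.

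Next, I would observe that because $\sigma^+_v \ge \sigma^-_v$ in this coupling and both take values in $\{-1,+1\}$, the event $\{\sigma^+_v \neq \sigma^-_v\}$ coincides with $\{\sigma^+_v = +1,\, \sigma^-_v = -1\}$. Hence
\begin{equation*}
\mathbf{P}\big(\sigma^+_v \neq \sigma^-_v\big) = \mathbf{P}\big(\sigma^+_v = +1\big) - \mathbf{P}\big(\sigma^-_v = +1\big) = \mu^+(\sigma_v = +1) - \mu^-(\sigma_v = +1),
\end{equation*}
which is precisely the total variation distance between the two single-site marginals on $\{-1,+1\}$. Summing over $v \in \Delta$ yields the right-hand side of~\eqref{eq:settoboundary}.

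There is no serious obstacle here; the only point to be careful about is ensuring that the monotone coupling is well-defined as a coupling on the full configuration on $\Delta'$ (so that the joint event $\{\sigma^+_\Delta \neq \sigma^-_\Delta\}$ makes sense and admits the union bound), which is guaranteed by the sequential construction via common uniform randomness described in the preliminaries. This is slightly stronger than what one gets from an arbitrary optimal coupling of each single-site marginal individually, and is exactly why the monotone coupling is the right tool.
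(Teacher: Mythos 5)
Your proof is correct and follows essentially the same route as the paper's (one-line) argument: the monotone coupling of $\mu^+$ and $\mu^-$ on $\Delta'$, the coupling bound on total variation, and a union bound over $v\in\Delta$. Your added observation that monotonicity makes each single-site disagreement probability equal to the single-site total variation distance is exactly the detail the paper leaves implicit, and it is handled correctly.
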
 

 \begin{proof}
 The upper bound follows immediately from the monotone coupling of the two measures on the left-hand side and a union bound over the vertices of $\Delta$. 
  \end{proof}

\begin{proof}[Proof of Proposition~\ref{prop:bd_product}]
    Fix $p\ge 1$, $t\ge 0$, a subset $A \subseteq [p]$ and a $p$-tuple of vertices $u_1,\cdots,u_p \in \Lambda$. Fix a sequence of integers satisfying condition Eq.~\eqref{eq:cond_elli}. 
    Without loss of generality we assume $A = \{u_1,\cdots,u_q\}$, $q\le p$. 
    We peel the terms in the product $\prod_{i \in A} \delta_t(u_i,\ell_i)$ one by one. We observe that the random variable $\delta_t(u_i,\ell_i)$ is a measurable function of $\{(y_t+h)_v : v \in B_{\ell_i}(u_i)\}$. Furthermore as per Proposition~\ref{prop:sl_bayes}, $y_t$ has the distribution of $t\sigma^* + B_t$ where $\sigma^* \sim \nu_0$ and $(B_t)$ is an independent standard Brownian motion. We define 
    \begin{equation*}
        X_i = \EE\big[\delta_t(u_i,\ell_i) \, \big| \, \sigma^*, h\big] \, .    
    \end{equation*}
     (The expectation is with respect to $B_t$.) This is a measurable function of $\big\{\sigma^*_v, h_v : v \in B_{\ell_i}(u_i)\big\}$. 
    Since the balls $B_{\ell_i}(u_i)$ do not intersect, we have 
    \begin{equation*}
    \EE\Big[\,\prod_{i=1}^q \delta_t(u_i,\ell_i) \, \big| \, h\Big] = \EE\Big[\,\prod_{i=1}^q X_i \, \big| \,  h\Big]
    = \nu_0\Big(\prod_{i=1}^q X_i \Big) \, .
    \end{equation*}    
    Moreover, since $d(u_1,u_i) \ge 2\ell_1 + \ell_i$,  $B_{2\ell_1}(u_1)$ and $B_{\ell_i}(u_i)$ are disjoint for all $i$, therefore 
    \begin{align}\label{eq:sepration}
     &\nu_0\Big(\prod_{i=1}^q X_i \Big) - \nu_0\big(X_1\big) \cdot \nu_0\Big(\prod_{i=2}^q X_i \Big) \nonumber\\
     &~~~=\nu_0\Big( \Big(\nu_0\big(X_1 | \sigma_{\partial B_{2\ell_1}(u_1)}^*\big) - \nu_0(X_1)\Big) \cdot \nu_0\Big(\prod_{i=2}^q X_i \,|\, \sigma_{\partial B_{2\ell_1}(u_1)}^*\Big)\Big)\nonumber \\
     &~~~\le \sup_{\tau,\tau'} \Big(\nu_0\big(X_1 | \sigma_{\partial B_{2\ell_1}(u_1)}^* = \tau\big) -\nu_0\big(X_1 | \sigma_{\partial B_{2\ell_1}(u_1)}^* = \tau'\big)\Big) \cdot \nu_0\Big(\prod_{i=2}^q X_i \Big)\, .
    \end{align}
    Since $0 \le X_1\le 1$ and by monotonicity with respect to the boundary condition, the above supremum is bounded above by 
    \begin{align}\label{eq:tv_vertices}
    &d_{\sTV}\Big(\nu_0\big(\sigma_{B(u_1,\ell_1)}^* \in \cdot \,|\, \sigma_{\partial B_{2\ell_1}(u_1)}^* = +\big),  \nu_0\big(\sigma_{B(u_1,\ell_1)}^* \in \cdot \,|\, \sigma_{\partial B_{2\ell_1}(u_1)}^* = -\big) \Big)\nonumber\\
    &~~~\le \sum_{v \in B(u_1,\ell_1)}  d_{\sTV}\Big(\nu_0\big(\sigma_{v}^* \in \cdot \,|\, \sigma_{\partial B_{2\ell_1}(u_1)}^* = +\big), \nu_0\big(\sigma_{v}^* \in \cdot \,|\, \sigma_{\partial B_{2\ell_1}(u_1)}^* = -\big) \Big) \nonumber\\
    &~~~\le  \sum_{v \in B(u_1,\ell_1)} \delta_0(v,\ell_1) \, .
    \end{align}
    The second line follows from Lemma~\ref{lem:settoboundary}, and the last line from the observation that $B_{\ell_1}(v) \subseteq B_{2\ell_1}(u_1)$.
It follows from Eq.~\eqref{eq:sepration} and Eq.~\eqref{eq:tv_vertices} that
    \begin{equation*}
    \nu_0\Big(\prod_{i=1}^q X_i \Big) \le \prod_{i=1}^q \Big(\nu_0\big(X_i\big) + \sum_{v \in B_{\ell_i}(u_i)} \delta_0(v,\ell_i) \Big) \, .
    \end{equation*}
    We conclude the proof by noting that $\nu_0(X_i) = \E\big[\delta_t(u_i,\ell_i)\,|\, h\big] \le \delta_0(u_i,\ell_i)$ by Lemma~\ref{lem:decreasingcorr}. 
\end{proof}

 \begin{proof}[Proof of Proposition~\ref{prop:bound_gamma}]
 Assuming condition~\eqref{eq:cond_elli}, the balls $B_{2\ell_i}(u_i)$ are disjoint, and therefore
\begin{align*}
\E_h\big[\Gamma(A,L)\big] &= \sum_{u_1, \dots, u_p \in \Lambda} \, \prod_{i \in A(u_1,\cdots,u_p)} \Big(\sum_{v \in B_{\ell_i}(u_i)} (1+\one_{v=u_i})\, \E_h\big[\delta_0(v,\ell_i)\big]\Big)\\
&\le \sum_{u_1, \dots, u_p \in \Lambda} \, \prod_{i \in A(u_1,\cdots,u_p)} \big(2C\,\ell_i^{d} \,e^{-\ell_i/C}\big) \, ,
\end{align*}
where we used $\WSM(C)$ to obtain the last line. 
Next we build the maps $A$ and $L$.
For a sequence of points $u_1, \dots, u_p \in \Lambda$ and a set $I \subseteq \{1,\cdots,p\}$ we define 
\begin{equation} \label{eq:def_L} 
    \ell_i(I):= \frac{1}{4} \min_{j \in \{i-1\} \cup I \setminus \{i\}} d_{\infty}(u_i,u_j)\, , ~~~~ i \in [p]\, .
\end{equation}
Next we define for all $i \in [p]$,
\begin{align*} 
r_i = r_i(u_1,...,u_p) &:= \frac{1}{4} \min_{j < i} \,d_{\infty}(u_i,u_j) \, ,\\
\mbox{and}~~
J_i = J_i(u_1,...,u_p) &:= \text{argmin}_{j < i}\,\, d_{\infty}(u_i,u_j) \, .
\end{align*}
Now, define the set of indices that are relatively close
\begin{equation*} 
Q_k = Q_k(u_1,...,u_p) := \big\{i \in [p]\, : ~ r_i \in [2^{k}, 2^{k+1}-1]\big\} \, ,~~~ k \ge 0\, , 
\end{equation*}
and let 
\begin{equation*}
k_* = k_*(u_1,...u_p) := \mathrm{argmax}_k \, |Q_k| 2^k \, .
\end{equation*}
Finally, let 
\begin{equation}\label{eq:def_A} 
A(u_1,...,u_p) := Q_{k_*}\, ,~~~\mbox{and}~~~ \ell_i(u_1,...,u_p) := \ell_{i}(A(u_1,...,u_p))\, .
\end{equation}
With these definitions, conditions~\eqref{eq:cond_elli} are satisfied: $d_{\infty}(u_i,u_{i-1}) \ge \ell_i$ and $d_{\infty}(u_i,u_{j}) \ge 2(\ell_i+\ell_j)$ for all $i,j\in Q_{k_*}$, $i \neq j$. Next we claim that
\begin{equation} \label{eq:est_comb}
\sum_{u_1, \dots, u_p \in \Lambda_n} \prod_{i \in A} 2C\ell_i^d e^{-\ell_i/C} \leq C_0^p N p! \, ,
\end{equation}
where $C_0$ is only a function of $d$ and $C$, and we wrote $A$ and $\ell_i$ for brevity. 
The key idea about the above definitions is that 
\begin{equation}\label{eq:ell_half}
    \ell_i \geq \frac{1}{2} r_i\, , ~~ \forall i \in Q_{k_*}\, .
\end{equation}
Indeed, suppose $\ell _i <\frac{1}{2}r_i$. Then there exists $j \in Q_{k_*}$, $j>i$ such that $d_{\infty}(u_i,u_j)\le \frac{1}{2}r_i$. Since $j \in Q_{k_*}$, $r_j$ is on the order of $r_i$; in particular $r_j\geq\frac{1}{2}r_i$. But $r_j \leq d_{\infty}(u_i,u_j)$, which leads to a contradiction.
Therefore
\begin{align} \label{eq:eq2comb}
\prod_{i \in Q_{k_*}} 2C\ell_i^d e^{-\ell_i/C} \leq c^{p}\prod_{i \in Q_{k_*}}  e^{-\ell_i/2C} \leq c^p\,e^{- \sum_{i \in Q_{k_*}} r_i / 4C}\,,
\end{align}
where $c = (2C)^p\max_{x>0} x^d e^{-x/2C}$, and we used~\eqref{eq:ell_half} in the second inequality.
Now we claim that $\sum_{i \in Q_{k_*}} r_i \ge c_0 \sum_{i \in [p]} \sqrt{r_i}$ for some absolute constant $c_0>0$. Indeed,
\begin{align*}
    \sum_{i \in [p]} \sqrt{r_i} &= \sum_{k \ge 0} \sum_{i \in Q_k} \sqrt{r_i} 
    \le \sum_{k\ge 0} 2^{-k/2}\sum_{i \in Q_k}  r_i\\
    &\le \Big(\sum_{k\ge 0} 2^{-k/2}\Big) \, \max_{k} \, \sum_{i \in Q_k}  r_i \, ,
\end{align*}
and
\begin{align*}
\max_{k} \, \sum_{i \in Q_k}  r_i &\le \max_{k} 2^{k+1} |Q_k| 
=  2^{k_*+1} |Q_{k_*}| \le 2 \sum_{i \in Q_{k_*}}  r_i \, .
\end{align*}

Plugging this to the display~\eqref{eq:eq2comb} yields
\begin{equation}\label{eq:gamma_2}
\E\big[\Gamma(A,L)\big] \le C_0^p \sum_{u_1, \dots, u_p \in \Lambda} \, \prod_{i=1}^p e^{-c_0 \sqrt{r_i}} \, ,
\end{equation}
for some $C_0,c_0>0$ depending on $d,C$. 
Now, there are $p!$ options for the choice of $J_i$'s. Given the $J_i$'s and the different choices for $r_i$, there are at most $|\Lambda| \prod_{i \in p} r_i^{d-1}$ different configurations. (We determine the position of $u_1$ and then given $r_i$ and $J_i$, the point $u_i$ is on the boundary of $B_{r_i}(u_{J_i})$. Notice that by construction $J_i < i$ which means that those offsets determine the entire configuration.) All in all, the right-hand side of \eqref{eq:gamma_2} is bounded by
\begin{equation*}
C_0^p \,|\Lambda| \,p! \sum_{r_1,...,r_p \in \mathbb{N}} \prod_{i \in [p]} r_i^{d-1} e^{- c\sqrt{r_i}} = C_0^p \Big(\sum_{r=0}^\infty r^{d-1}e^{- c\sqrt{r}}\Big)^p p!\, |\Lambda| \, ,
\end{equation*}
concluding the proof.
\end{proof}

\section{Fast mixing with large variance fields or SSM}\label{sec:large_variance}
Our aim in this section is to show that in either of the following situations: 
\begin{enumerate}
\item a $\RFIM$ with sufficiently anti-concentrated external field, whose absolute values are independent (though the signs are not necessarily independent),
\item a $\RFIM$ satisfying $\SSM(C)$ in expectation (per Definition~\ref{assump:SSM-RFIM0}) with independent fields,
\end{enumerate}
has $N^{o(1)}$ mixing time and inverse spectral gap. The first of these proves Theorem~\ref{thm:fast-mixing-large-variance0}, thereby concluding the missing step from the proof of Theorem~\ref{thm:algebraic_decay}, and the second proves Theorem~\ref{mainthm:SSM}. 

These two proofs follow a very similar strategy based on coarse-graining the lattice into blocks where the field is good, and a sub-critical set of blocks where the field is bad, and we therefore present them together. In particular, most of the steps in the proofs are identical, and in the instances they are different, we present proofs for each situation.

The formal assumption for (1) above of a sufficiently anti-concentrated field with independent absolute values is the following: 
\begin{Ass}\label{ass:anticoncentrated-field}
    Let $K$ be such that 
    \begin{align}\label{eq:rho-parameter}
    \rho = \rho(\beta,d,K):= \frac{e^{2d\beta - K}}{e^{ 2d\beta -K} + e^{ - 2d\beta +K}}\,,
\end{align}
is less than $1/40d$, say. 
Suppose that the $\RFIM$ with external field $(h_x)_{x\in \Lambda}$ is such that $(|h_x|)_x$ are i.i.d.\ (their signs may be dependent), and such that 
    \begin{align*}
        \mathbb P_h(|h_x|\le K) < \tfrac{1}{40d}\,.
    \end{align*}
\end{Ass}

We will prove the following theorem that includes in it both Theorem~\ref{thm:fast-mixing-large-variance0} and Theorem~\ref{mainthm:SSM}. 

\begin{Thm}\label{thm:fast-mixing-large-variance}
    Suppose $\beta,d$ and $\mathbb P_h$ are such that one of Assumption~\ref{ass:anticoncentrated-field}, or Assumption $\SSM(C)$ per Definition~\ref{assump:SSM-RFIM0}, holds. There exist $\kappa = \kappa(d,\beta,C)>0$ and $C_{d,\beta}>0$ such that for every $L \ge \kappa \log n$, the $\RFIM$ Glauber dynamics on $\Lambda\in \mathcal Q_n$ satisfies 
    \begin{align*}
        \mathbb P_h\Big(\gap_{\Lambda}^{-1}\ge e^{C_{d,\beta} L^{\frac{d-1}{d}}(\log L)^{d-1}}\Big) \le e^{ - L}\,.
    \end{align*}
    Making the choice $L = \kappa \log n$, this gives that $\gap_{\Lambda_n}^{-1}\le n^{o(1)}$ except with probability $n^{-\kappa}$. 
\end{Thm}

For readability, we will prove this for $\Lambda= \Lambda_n$. It can easily be checked that the addition of some boundary vertices can be handled in the obvious manners.

\subsection{The coarse graining}
We begin with a coarse graining of the external field, to split it into good regions where the field is fairly large in a coarse sense (depending on which of Assumptions~\ref{ass:anticoncentrated-field} or~\ref{assump:SSM-RFIM0} are used) and exponential decay of correlations strongly holds, and regions where the field is not large enough for the exponential decay of correlations to hold. 

To coarse-grain $\Lambda_n$, we take a large $R$ (which we will later take to be $\approx  \log L$) and consider $\Lambda_n^{(R)} = \Lambda_n \cap R\mathbb Z^d$. For every vertex $v\in \Lambda_n^{(R)}$, let $B_{v} = B_{R}(v)$ be the $\ell^\infty$-ball of radius $R$ (side-length $2R$) in $\Lambda_n$ centered at $v$. 

The coarse-grained percolation process is an $|h|$-measurable, or $h$-measurable (depending on which assumption we are taking), percolation process on $\Lambda_n^{(R)}$ consisting of assignments of $\{\good,\bad\}$ to $\Lambda_n^{(R)}$. We refer to Figure~\ref{fig:coarse-graining} for a visualization.

\begin{figure}
    \centering
    \includegraphics[width=0.33\textwidth]{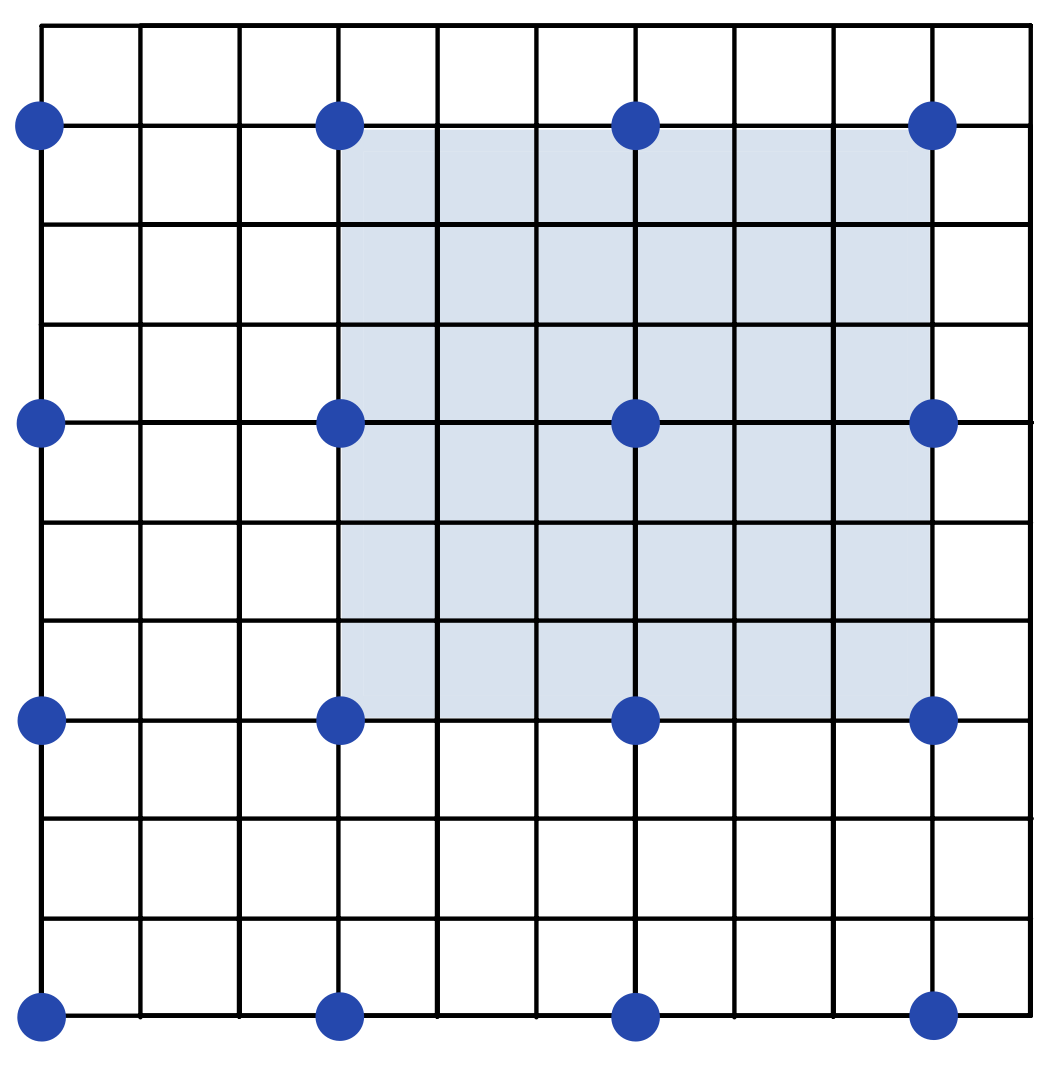}
\qquad \qquad\qquad 
\includegraphics[width = 0.33\textwidth]{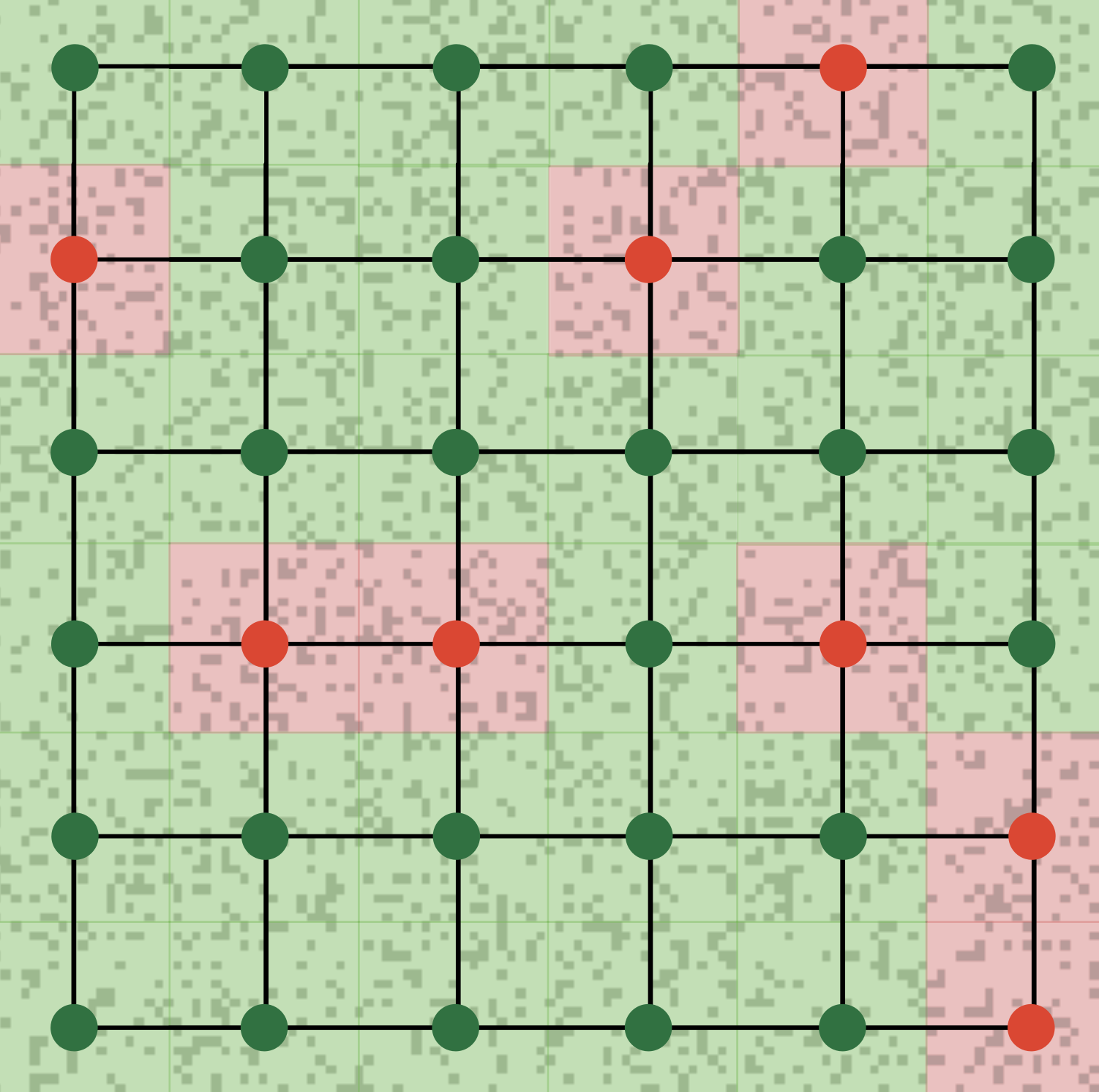}
    \caption{Left: the coarse-grained lattice $\Lambda_n^{(R)}$ (blue vertices) on top of $\Lambda_n$ (black); shaded blue is the $\ell^\infty$-ball $B_R(v)$ for a vertex $v\in \Lambda_n^{(R)}$. Right: a realization of the $\{\good,\bad\}$ coarse-grained process  (green and red vertices respectively) super-imposed on top of a percolation process on the original lattice $\Lambda_n$ marking regions with atypically small values of $h$.}
    \label{fig:coarse-graining}
    \vspace{-0.2in}
\end{figure}

\subsubsection{Good blocks of the coarse graining}
Here, we define the properties of $|h|$ or $h$ that make a block $v\in \Lambda_n^{(R)}$ be $\good$. There will be two different definitions for the two assumptions. 

When under Assumption~\ref{ass:anticoncentrated-field}, the coarse-grained percolation process is defined as follows. 

\begin{Def}\label{def:anticoncentrated-good-blocks}
    For any fixed realization of $|h|$ (and thus of $\omega^h$), let $\omega_x^h = \mathbf{1}_{|h_x|\le K}$ and let $\eta = (\eta_x)_{x\in \Lambda_n}$ be i.i.d.\ $\text{Ber}(\rho)$ from~\eqref{eq:rho-parameter}. Finally, let $\omega^\influence_x = \omega_x^h \vee \eta_x$ for each $x\in \Lambda_n$.
    
    A vertex $v\in \Lambda_n^{(R)}$ is called $\good$ if for every $w\in B_{v}$, for every $(\log R)^2 \le r \le R/8$,
        \begin{align*}
            \mathbb P(|\cC_w(\omega^\influence)|\ge r \mid \omega^h) \le e^{ - r}\,.
        \end{align*}
        where we use $\cC_w(\omega^\influence)$ for the connected component of $w$ in $\omega^\influence$. 
    If $v\in \Lambda_n^{(R)}$ is not $\good$, it is $\bad$. 
\end{Def}

When under Assumption~\ref{assump:SSM-RFIM0}, the coarse-grained percolation process is defined as follows. 

\begin{Def}\label{def:SSM-good-blocks}
    For any fixed realization of $h$, a vertex $v\in \Lambda_n^{(R)}$ is called $\good$ if for every $w\in B_v$, for every $(\log R)^2 \le \ell \le 2r \le R/4$,  
    \begin{align}\label{eq:SSM-good-event}
        \max_{B\in B_{r}(w) + \{-r+1,...,r-1\}^d} \max_{\substack{z\in \partial B \\ d(w,z) = \ell}} \max_{\eta \in \{\pm 1\}^{\partial B\setminus z}} d_{\sTV} \big(\mu_{B}^{\eta^+}(\sigma_w),\mu_{B}^{\eta^-}(\sigma_w)\big) \le e^{ - \ell/C_*}\,.
    \end{align}
    The vertex $v$ is called $\bad$ otherwise. 
\end{Def}

In both situations, the $\good$ blocks are those in which the influence of a boundary condition, on a distance at least $(\log R)^2$ interior to the block is indeed exponentially small.

\subsubsection{Dominating bad blocks by a sub-critical percolation process}

The following lemmas show that by taking $R$ large, under their respective assumptions, the probability of a vertex in $\Lambda_n^{(R)}$ being $\bad$ can be made arbitrarily small.

\begin{lemma}\label{lem:small-prob-of-bad}
    Under Assumption~\ref{ass:anticoncentrated-field} and the corresponding Definition~\ref{def:anticoncentrated-good-blocks},  if $R$ is at least a $d$-dependent constant, then for every $v\in \Lambda_n^{(R)}$,
    \begin{align*}
        \mathbb P_h(v \text{ is $\bad$}) \le R^{- 5}\,.
    \end{align*}
\end{lemma}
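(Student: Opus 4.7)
The plan is a simple two-step argument: a Markov inequality on the randomness of $h$, followed by a standard subcritical cluster-size estimate for the auxiliary percolation $\omega^\influence$.

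First I would observe that under the joint law $\PP = \PP_h \otimes \PP_\eta$, the field $\omega^\influence = (\omega_x^h \vee \eta_x)_{x \in \Lambda_n}$ is i.i.d.\ Bernoulli with parameter
\begin{equation*}
p \;=\; \PP_h(|h_x|\le K) + \rho\big(1-\PP_h(|h_x|\le K)\big) \;\le\; \tfrac{1}{40d} + \tfrac{1}{40d} \;=\; \tfrac{1}{20d}\,,
\end{equation*}
by Assumption~\ref{ass:anticoncentrated-field}. This is well below the site-percolation threshold $p_c(\ZZ^d)\ge 1/(2d-1)$, and in fact so far below it that a standard lattice-animal/Peierls-type bound gives a constant $c_0 = c_0(d) > 1$ with
\begin{equation*}
\PP\big(|\cC_w(\omega^\influence)|\ge r\big) \;\le\; e^{-c_0 r} \qquad \forall\, w\in \Lambda_n,\ r\ge 1\,.
\end{equation*}

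Next, for each fixed $w\in B_v$ and each $r\in [(\log R)^2, R/8]$, I would apply Markov's inequality to the nonnegative, $\omega^h$-measurable random variable $\PP\big(|\cC_w(\omega^\influence)|\ge r\mid \omega^h\big)$: its $\PP_h$-probability of exceeding $e^{-r}$ is at most $e^r\cdot \PP(|\cC_w|\ge r)\le e^{-(c_0-1) r}$, which is at most $e^{-(c_0-1)(\log R)^2}$. A union bound over the at most $(2R+1)^d$ choices of $w\in B_v$ and the at most $R$ integer values of $r$ then yields
\begin{equation*}
\PP_h(v \text{ is } \bad) \;\le\; (2R+1)^d \cdot R \cdot e^{-(c_0-1)(\log R)^2} \;\le\; R^{-5},
\end{equation*}
once $R$ exceeds a constant depending only on $d$, since the super-polynomial factor $e^{-(c_0-1)(\log R)^2}$ dominates the polynomial prefactor $R^{d+1}$.

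There is no genuine obstacle here beyond checking the subcritical cluster-size bound with exponential rate strictly greater than $1$. That is precisely what the slack in Assumption~\ref{ass:anticoncentrated-field}---both $\PP_h(|h_x|\le K)$ and $\rho$ bounded by $1/(40d)$---was engineered to produce, via the standard animal enumeration estimate $\#\{\text{size-}n\text{ animals containing }w\}\le (2de)^n$ together with $(2de)\cdot p \le e/10 < e^{-1}$. The remainder of the argument is purely a union bound.
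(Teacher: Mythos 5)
Your proof is correct and follows essentially the same route as the paper's: dominate $\omega^\influence$ by an i.i.d.\ Bernoulli$(1/(20d))$ percolation, establish an annealed cluster-tail bound $e^{-c_0 r}$ with $c_0>1$, convert it via Markov's inequality into a quenched bound on $\mathbb P_\eta(|\cC_w(\omega^\influence)|\ge r\mid \omega^h)$, and union bound over $w\in B_v$ and $r$. The only immaterial differences are that the paper obtains the subcritical tail by branching-process comparison rather than lattice-animal enumeration, and sums the geometric series over $r$ rather than paying an extra factor of $R$ in the union bound.
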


\begin{proof}
    We first observe that under Assumption~\ref{ass:anticoncentrated-field}, the process $(\omega^\influence_x)_x$ of Definition~\ref{def:anticoncentrated-good-blocks} is stochastically below an independent Bernoulli percolation with parameter $1/(20d)$. 
    
    By standard percolation reasoning (comparison to a branching process with offspring distribution $\text{Bin}(2d,1/(20d))$) for every $w\in \Lambda_n$, and every $r\ge 1$, 
    \begin{align*}
        \mathbb P_h \otimes \mathbb P_\eta (|\cC_w(\omega^\influence)| \ge r) \le e^{ - 2r}\,,
    \end{align*}
    where $\mathbb P_\eta$ is the law of the $\text{Ber}(1/(20d)$ process $(\eta_x)_x$. By Markov's inequality, for every $w\in \Lambda_n$, 
    \begin{align*}
        \mathbb P_h\big(\{h: \mathbb P_\eta(|\cC_w(\omega^\influence)|\ge r \mid h)\ge e^{ - r}\}\big) \le e^{ - r}\,.
    \end{align*}
    As such, for each $v\in \Lambda_n^{(R)}$, by a union bound, 
    \begin{align*}
        \mathbb P_h \big(h: \forall w\in B_v\,,\,\forall (\log R)^2\le r\le R/8\,,\, \mathbb P_\eta(|\cC_w(\omega^\influence)|\ge r \mid h) \le e^{-r}\big) \ge 1- (2R)^{d} \sum_{r= (\log R)^2}^{R/8} e^{ -r}\,.
    \end{align*}
    In particular, we get that for $R$ larger than a fixed constant, $\mathbb P_h(v \text{ is }\good) \ge 1-R^{- 5}$. 
\end{proof}

\begin{lemma}\label{lem:small-prob-of-bad-SSM}
    If $\SSM(C)$ holds per Definition~\ref{assump:SSM-RFIM0}, with the corresponding Definition~\ref{def:SSM-good-blocks}, there exists $C_*(d,C), R_0 (d,C)$ such that for all $R\ge R_0$, for every $v\in \Lambda_n^{(R)}$, 
    \begin{align*}
        \mathbb P_h(v \text{ is $\bad$}) \le R^{-5}\,.
    \end{align*}
\end{lemma}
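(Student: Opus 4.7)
The plan is to upgrade the in-expectation $\SSM(C)$ bound to a high-probability statement via a Markov inequality applied to each individual total-variation distance, followed by a union bound over all the quantifiers appearing in Definition~\ref{def:SSM-good-blocks}. The key observation is that the inner threshold $(\log R)^2$ in Definition~\ref{def:SSM-good-blocks} produces super-polynomial decay in $R$, comfortably absorbing the polynomial-in-$R$ number of terms in the union bound.

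First, I would choose $C_* = 2C$ (any fixed $C_* > C$ works). For each quadruple $(w,r,\ell,B)$ with $w\in B_v$, $(\log R)^2 \le \ell \le 2r \le R/4$, $B$ a cube of radius $r$ containing $w$ in its interior, and $z\in\partial B$ with $d(w,z)=\ell$, Definition~\ref{assump:SSM-RFIM0} (translated so $w$ plays the role of $o$) combined with Markov's inequality yields
\[
\mathbb P_h\Big(\max_{\eta\in\{\pm1\}^{\partial B\setminus\{z\}}} d_{\sTV}\big(\mu_B^{\eta^+}(\sigma_w),\mu_B^{\eta^-}(\sigma_w)\big)\ge e^{-\ell/C_*}\Big) \le C\,e^{-\ell/C+\ell/C_*} = C\,e^{-\ell/(2C)}.
\]

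Next, I would count the number of terms contributing to the complement of the $\good$ event at $v$. Inside $B_v$ there are at most $(3R)^d$ choices of $w$; the scale $r$ ranges over at most $R/8$ values, $\ell$ over at most $R/4$ values, and for each $w,r$ there are at most $(3r)^d$ cubes $B$ of radius $r$ containing $w$ in their interior and at most $(3r)^{d-1}$ vertices $z\in\partial B$ at fixed distance $\ell$. Hence the total number of conditions to be checked is at most $P(R)$ where $P$ is a polynomial of degree $O(d)$ in $R$. A union bound then gives
\[
\mathbb P_h(v \text{ is } \bad)\;\le\; P(R)\cdot C\,e^{-(\log R)^2/(2C)},
\]
since the exponential factor is monotone decreasing in $\ell$ and $\ell\ge(\log R)^2$ throughout. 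The right-hand side decays faster than any polynomial in $R$, so choosing $R_0=R_0(d,C)$ large enough ensures $\mathbb P_h(v\text{ is }\bad)\le R^{-5}$ for every $R\ge R_0$, as claimed.

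I do not expect any serious obstacle here: the two small subtleties are (i) making sure the $\SSM$ definition, stated with the origin at the center, applies verbatim to any $w\in\Lambda_n$ after translation, which is automatic since the field is i.i.d.\ and $\Lambda_n$ is large enough that the relevant cubes of radius $r\le R/8$ fit inside $\Lambda_n$; and (ii) bookkeeping the polynomial count $P(R)$, which is only used qualitatively against the super-polynomial factor $e^{-(\log R)^2/(2C)}$. The constant $C_*$ in Definition~\ref{def:SSM-good-blocks} is then fixed as $2C$, depending only on $d$ and $C$.
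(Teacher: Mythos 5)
Your proposal is correct and follows essentially the same route as the paper: Markov's inequality applied to the in-expectation $\SSM(C)$ bound with $C_*=2C$, followed by a polynomial-in-$R$ union bound over $(w,r,\ell,B,z)$, with the factor $e^{-(\log R)^2/(2C)}$ coming from the lower cutoff $\ell\ge(\log R)^2$ beating the polynomial count. The only cosmetic difference is that the paper retains the sum over $\ell$ of $\ell^{d-1}e^{-\ell/C_*}$ rather than bounding every term by the worst case $\ell=(\log R)^2$; both give the super-polynomial decay needed for $R^{-5}$.
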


\begin{proof}
    For each fixed $w\in B_v$, $(\log R)^2 \le \ell \le r\le R/8$, $B\in B_r(w) + \{-r+1,...,r-1\}^d$ and $z\in \partial B$ in~\eqref{eq:SSM-good-event} at distance $\ell$ from $w$, let $A_{w,B,z}$ be the complement of the event in~\eqref{eq:SSM-good-event} i.e., $$\{\max_{\xi \in \{\pm 1\}^{\partial B\setminus z}} d_{\sTV} (\mu_{B,\xi^+}(\sigma_w),\mu_{B,\xi^-}(\sigma_w)) > e^{ - \ell/C_*}\}\,.$$ Then, the assumption of $\SSM(C)$ implies by Markov's inequality that  
    \begin{align*}
        \mathbb P(A_{w,B,z}) \le e^{ - \frac{\ell}{C} + \frac{\ell}{C_*}}\,,
    \end{align*}
    so that if $C_* = 2C$, then this is at most $e^{ - \ell/C_*}$. Noting that 
    \begin{align*}
        \{v \text{ is $\bad$}\} \subset \bigcup_{w\in B_v} \bigcup_{\ell,r=(\log R)^2}^{R/8} \bigcup_{B} \bigcup_{\substack{z\in \partial B \\ d(z,w) = \ell}} A_{w,B,z}\,,
    \end{align*}
    a union bound implies 
    \begin{align*}
        \mathbb P(v\text{ is $\bad$}) \le R^d \cdot (R/8) \cdot \sum_{\ell \ge (\log R)^2} (R/8)^d \ell^{d-1} e^{ - \ell/C_*}\,.
    \end{align*}
    Evidently, as long as $R$ is a sufficiently large constant depending on $C_*$ and $d$, the right-hand side is at most $R^{-5}$ 
 (in fact, super-polynomially small) as claimed.  
\end{proof}

   For a realization of $h$, inducing a configuration $\omega^h$, let $\good(h)$ be the subset of $\Lambda_n^{(R)}$ that are $\good$, and let $\bad(h)$ be the subset of $\Lambda_n^{(R)}$ that are $\bad$. Note that under Assumption~\ref{ass:anticoncentrated-field}, $\good(h)$ is an $|h|$-measurable process on $\Lambda_n^{(R)}$ and under $\SSM(C)$, it is an $h$-measurable process on $\Lambda_n^{(R)}$. The states of different $v,w\in \Lambda_n^{(R)}$ are not quite independent, but the following lemma says that in either case, the $\bad$ sites of $\Lambda_n^{(R)}$ are stochastically below a sub-critical independent percolation on $\Lambda_n^{(R)}$.

\begin{lemma}\label{lem:domination-by-independent-percolation}
For every $c>0$, under Assumption~\ref{ass:anticoncentrated-field}, there exists $R_0(c,d)$ such that for all $R>R_0(c)$, under $\mathbb P_h$ 
the set $\bad(h)\subset \Lambda_n^{(R)}$ is stochastically below an independent Bernoulli-$c$ percolation on $\Lambda_n^{(R)}$. 

Under Assumption~\ref{assump:SSM-RFIM0}, there exists $R_0(C,c,d)$ such that the same conclusion holds. 
\end{lemma}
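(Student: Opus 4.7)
The plan is to reduce this to a standard stochastic domination result for finite-range dependent percolation, once we verify that the process of $\bad$ blocks is $k$-dependent for some fixed $k = k(d)$. First I would establish the locality (i.e.\ $k$-dependence) of $\bad(h)$: in both coarse-graining setups, the event $\{v \in \good(h)\}$ is measurable with respect to the field (and auxiliary randomness $\eta$) restricted to an $\ell^\infty$-ball of radius $O(R)$ around $v$. Under Assumption~\ref{assump:SSM-RFIM0}, this is immediate from Definition~\ref{def:SSM-good-blocks}, since all boxes $B$ appearing in~\eqref{eq:SSM-good-event} are contained in $B_{R/4}(w) \subset B_{5R/4}(v)$, and the TV distances depend only on $h|_B$. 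Under Assumption~\ref{ass:anticoncentrated-field}, the key observation is that any connected subset of $\mathbb{Z}^d$ of size $s$ has graph diameter at most $s-1$, so for $w\in B_v$ and $r\le R/8$ the event $\{|\cC_w(\omega^\influence)|\ge r\}$ depends only on $(\omega^h,\eta)$ in $B_{R/8}(w) \subset B_{9R/8}(v)$; hence whether $v$ is $\good$ is measurable with respect to $(|h|,\eta)$ restricted to $B_{9R/8}(v)$.

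Consequently, for any $v,w \in \Lambda_n^{(R)}$ with $\|v-w\|_\infty > 3R$, the events $\{v\in\bad\}$ and $\{w\in \bad\}$ are supported on disjoint blocks of the underlying independent randomness $(|h|,\eta)$ (resp.\ $h$), hence are independent. In the rescaled coordinates, i.e.\ viewing $\bad(h)$ as a site process on $\Lambda_n^{(R)}/R \subset \mathbb{Z}^d$, this means $\bad(h)$ is $k$-dependent for some fixed $k = k(d)$.

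Next I would invoke the Liggett--Schonmann--Stacey theorem: any $k$-dependent site percolation on $\mathbb{Z}^d$ whose marginal probabilities are uniformly at most $p$ is stochastically dominated by an independent Bernoulli-$\pi(p,k,d)$ percolation, where $\pi(p,k,d)\to 0$ as $p\to 0$. By Lemma~\ref{lem:small-prob-of-bad} (resp.\ Lemma~\ref{lem:small-prob-of-bad-SSM}), under Assumption~\ref{ass:anticoncentrated-field} (resp.\ Assumption~\ref{assump:SSM-RFIM0}), for $R \ge R_0(d,C)$ we have $\mathbb P_h(v\in \bad(h))\le R^{-5}$, uniformly in $v\in \Lambda_n^{(R)}$. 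Choosing $R$ large enough depending on $c,d$ (and $C$ in the SSM case) ensures $\pi(R^{-5},k,d) \le c$, completing the domination.

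The only delicate point is the locality verification under Assumption~\ref{ass:anticoncentrated-field}, where one has to carefully exploit the truncation $r\le R/8$ in Definition~\ref{def:anticoncentrated-good-blocks} together with the diameter-versus-size bound for connected subsets of $\mathbb{Z}^d$ to see that the conditional probability $\mathbb P(|\cC_w(\omega^\influence)|\ge r \mid \omega^h)$ is a function of $\omega^h$ in a bounded window around $w$. Once that is in place, the rest is a black-box invocation of Liggett--Schonmann--Stacey.
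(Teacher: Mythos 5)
Your proposal is correct and follows essentially the same route as the paper's proof: establish that the $\good$/$\bad$ status of $v$ is measurable with respect to the disorder in an $O(R)$-window around $v$ (so the $\bad$ process is finite-range dependent on $\Lambda_n^{(R)}$), then combine the Liggett--Schonmann--Stacey domination theorem with the marginal bounds $\mathbb P_h(v\in\bad)\le R^{-5}$ from Lemmas~\ref{lem:small-prob-of-bad} and~\ref{lem:small-prob-of-bad-SSM}. The only cosmetic differences are the radius of the measurability window ($9R/8$ versus the paper's $5R/4$, both valid) and that in the anti-concentrated case the status is in fact $|h|$-measurable alone since $\eta$ is averaged out in the conditional probability, which is immaterial to the argument.
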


\begin{proof}
    Under Assumption~\ref{ass:anticoncentrated-field}, for every $v\in \Lambda_n^{(R)}$, its status as $\good$ versus $\bad$ is measurable with respect to $(|h_x|)_{x: d_\infty(x,v)\le 5R/4}$, whence the percolation generated by the $\bad$ sites on $\Lambda_n^{(R)}$ is $2$-dependent, i.e., it is independent of the status of all sites at $\Lambda_n^{(R)}$-distance at least $3$ (here we are endowing $\Lambda_n^{(R)}$ with adjacency if vertices are at $\mathbb Z^d$-distance $R$). By \cite{LSS}, for every $c>0$, there exists $c'(c,\Delta)>0$ such that any $2$-dependent percolation on a graph of maximum degree $\Delta$ of parameter $c'$ is stochastically below and independent $c$-percolation. 
    Therefore, by Lemma~\ref{lem:small-prob-of-bad}, if $R$ is sufficiently large, the probability of a vertex being $\bad$ is at most $c'$, whence the set of $\bad$ vertices are stochastically below an independent $c$-percolation. 

    Under Assumption~\ref{assump:SSM-RFIM0}, using Lemma~\ref{lem:small-prob-of-bad-SSM}, the reasoning is identical except that it is now $(h_x)_{x:d_\infty(x,v)\le 5R/4}$ measurable, but at the same time $(h_x)_x$ is an independent process rather than just $(|h_x|)_x$. 
\end{proof}

    To conclude the discussion of the coarse-graining, let us introduce a few notions for the geometry of the $R$-coarse-grained lattice.

   \begin{Def}
     We say two sites in $\Lambda_n^{(R)}$ are $R$-adjacent if they are at $\ell^2$-distance $R$ from one another. We say they are $R$-$*$-adjacent if they are at $\ell^\infty$-distance $R$ from one another. Using these notions of $R$-adjacency, for a subset of $\Lambda_n^{(R)}$, we can define its $R$-clusters (maximal connected components via $R$-adjacency), and analogously $R$-$*$-clusters. The important thing about these notions of adjacency is that $R$-adjacency is ``dual to" $R$-$*$-adjacency, in the sense that if $A$ is not $R$-$*$-connected to $B$, then there is an $R$-cluster separating $A$ from~$B$ and vice versa. 
\end{Def}

   Let $\cC^{(R)}_{w,\bad}(\omega^h) \subset \Lambda_n^{(R)}$ denote the $R$-$*$-connected component of $\bad(h)$ containing $w$. Notice that its $R$-$*$-outer-boundary will form an $R$-connected subset of $\good(h)$ surrounding $w$. Since the degree of $\Lambda_n^{(R)}$ under $R$-$*$-adjacency is at most $3^d$, by Lemma~\ref{lem:domination-by-independent-percolation} with the choice of $c=1/4^d$, say, and standard percolation reasoning, we obtain the following. 

\begin{Cor}\label{cor:largest-bad-cluster}
    Under Assumption~\ref{ass:anticoncentrated-field}, there exists $\kappa(d), R_0(d)$ such that if $R\ge R_0(d)$ and $L\ge \kappa \log n$,  
    \begin{align*}
        \mathbb P_h \Big(\max_{w \in \Lambda_n^{(R)}} |\cC^{(R)}_{w,\bad}(\omega^h)| \ge L\Big) \le e^{-L}\,.
    \end{align*}
    The same holds under Assumption~\ref{assump:SSM-RFIM0} of $\SSM(C)$ with the constants depending on $C$ also.
\end{Cor}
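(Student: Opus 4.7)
The statement is a standard sub-critical percolation tail estimate, transferred to the dependent bad process via the stochastic domination already established. The plan is to (i) pass from the coarse-grained bad process to a genuinely independent, sufficiently sub-critical Bernoulli percolation on $\Lambda_n^{(R)}$, (ii) apply a standard exponential tail bound on cluster sizes for that percolation, and then (iii) absorb the volume factor from a union bound over $w$ into the condition $L \ge \kappa \log n$.

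First I would choose the target domination parameter $c$ in Lemma~\ref{lem:domination-by-independent-percolation} small enough that independent Bernoulli-$c$ site percolation on $\Lambda_n^{(R)}$ equipped with $R$-$*$-adjacency is deeply sub-critical. Since the $R$-$*$-degree of $\Lambda_n^{(R)}$ is $3^d-1$, taking for instance $c = 1/(2\cdot 3^d)$ (or even smaller, say $c = e^{-2}/3^d$) makes the standard branching-process comparison with offspring distribution $\mathrm{Bin}(3^d-1,c)$ strictly sub-critical with expected offspring $\le 1/2$. Lemma~\ref{lem:domination-by-independent-percolation} then produces an $R_0(d)$ (resp.\ $R_0(d,C)$ under Assumption~\ref{assump:SSM-RFIM0}) such that for $R\ge R_0$ the law of $\bad(h)\subset \Lambda_n^{(R)}$ is stochastically below an independent $c$-percolation $\omega^{\mathrm{ind}}$ on $\Lambda_n^{(R)}$.

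Second, for the dominating independent percolation, a standard contour/branching-process argument gives a constant $\alpha = \alpha(d,c) > 0$ such that for every fixed $w \in \Lambda_n^{(R)}$ and every $L\ge 1$,
\begin{equation*}
\mathbb P\big(|\cC_w(\omega^{\mathrm{ind}})|\ge L\big) \le e^{-\alpha L}\,.
\end{equation*}
By choosing $c$ small enough (i.e.\ $R_0$ large enough) we can ensure $\alpha \ge d+2$, say. Stochastic domination then transfers the bound to the bad process: $\mathbb P_h(|\cC^{(R)}_{w,\bad}(\omega^h)|\ge L) \le e^{-\alpha L}$ for each fixed $w$. Finally, a union bound over the at most $(n/R)^d\le n^d$ vertices $w\in \Lambda_n^{(R)}$ gives
\begin{equation*}
\mathbb P_h\Big(\max_{w\in \Lambda_n^{(R)}} |\cC^{(R)}_{w,\bad}(\omega^h)|\ge L\Big) \le n^d\, e^{-\alpha L}\,,
\end{equation*}
which is bounded by $e^{-L}$ as soon as $L \ge \frac{d}{\alpha-1}\log n =: \kappa \log n$.

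I do not anticipate a genuine obstacle: the only mild subtlety is that the naive $c = 1/4^d$ suggested in the excerpt only guarantees exponential decay of cluster size with \emph{some} rate $\alpha$, not necessarily $\alpha>1$, so the union bound over the volume $(n/R)^d$ forces me to take $c$ sufficiently small (equivalently $R$ sufficiently large in Lemma~\ref{lem:domination-by-independent-percolation}) to obtain a rate strictly greater than $d$. This is what fixes the constant $\kappa = \kappa(d)$ (or $\kappa(d,C)$ under $\SSM$) in the statement. The proof under Assumption~\ref{assump:SSM-RFIM0} is identical word-for-word; the only difference is that the $R_0$ produced by Lemma~\ref{lem:domination-by-independent-percolation} additionally depends on $C$.
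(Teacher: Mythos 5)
Your proof is correct and follows essentially the same route the paper intends: stochastic domination of $\bad(h)$ by an independent Bernoulli-$c$ percolation via Lemma~\ref{lem:domination-by-independent-percolation}, a standard sub-critical cluster-size tail bound, and a union bound over $\Lambda_n^{(R)}$ absorbed into $L\ge\kappa\log n$. Your observation that $c$ must be taken small enough for the tail rate $\alpha$ to exceed $1$ (so that $n^d e^{-\alpha L}\le e^{-L}$ is achievable) is a legitimate and worthwhile sharpening of the paper's casual choice $c=1/4^d$.
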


\subsection{Blocks for the block dynamics}
We now define our candidate blocks for the block dynamics using the coarse graining of the previous section. The choice of the blocks will itself  be random, dependent on the realization $h$ through the realization of $\good(h)$ and $\bad(h)$. The crux of the construction is the fact that for any choice of $h$ such that $\max_{w} |\cC^{(R)}_{w,\bad}(\omega^h)|\le L$, the resulting choice of blocks satisfies:
\begin{enumerate}
    \item The volume of every block is at most $L^{1+o(1)}$, and in turn the mixing time of Glauber dynamics on the block is sub-exponential in $L$ by isoperimetry properties in $\mathbb Z^d$.  
    \item The expected Hamming distance of an update on any block with two boundary conditions that differ only at one site is at most $O((\log R)^2)$. 
    \item For every site $v\in \Lambda_n$, the number of blocks for which $v$ is on their boundary divided by the number for which $v$ is in their interior is at most $O(1/R)$. 
\end{enumerate}
In the next subsection, these three properties will be used in conjunction to obtain the claimed mixing time bound of Theorem~\ref{thm:fast-mixing-large-variance}.  

\begin{Def}\label{def:block-dynamics-blocks}
    Fix a realization of $\good(h)$. The blocks $\cB = (\mathcal B_i)_i$ are constructed as follows. 
    \begin{itemize}
        \item \emph{Type 1 blocks}: For each vertex $w\in \Lambda_n$ at distance at most $3R/4$ from an element of $\good(h)$, include the $\ell^\infty$-ball of radius $R/8$ in $\Lambda_n$ centered at $w$ as a block in $\cB$. 
        \item \emph{Type 2 blocks}: For each distinct component $\cC$ in $\{\cC_{w,\bad}^{(R)}(\omega^h)\}_{w\in \Lambda_n^{(R)}}$, let $B_\cC$ consist of all vertices at $\ell^\infty$-distance at most $R$ from $\cC$ and $\ell^\infty$-distance at least $R/2$ from $\good(h)$. Include all translates $(B_\cC + a)_{a\in \{-\frac{R}{8},...,\frac{R}{8}\}^d}$ (intersected with $\Lambda_n$) as blocks in $\cB$. 
    \end{itemize}
    See Figure~\ref{fig:Type-1-2-tiles} for a schematic of the two types of blocks.
\end{Def}

\begin{Obs}\label{obs:type-2-blocks}
    By construction, each $B_\cC$ is exactly a union of cubes of side-length $R$, one centered at each vertex $v\in \cC$. From this we deduce that 
    type 2 blocks originating from some $\cC$ and type 2 blocks originating from some other $\cC'$ must be at least distance  $3R/4$ apart. 
\end{Obs}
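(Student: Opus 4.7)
The plan is to prove both parts of Observation~\ref{obs:type-2-blocks} by first establishing the cleaner set equality
\[
B_\cC = \bigcup_{v\in\cC} C_v\,, \qquad C_v:=\{w\in\Lambda_n:d_\infty(w,v)\le R/2\},
\]
which directly identifies $B_\cC$ as a union of cubes of side-length $R$ centered at vertices of $\cC$. The separation statement will then reduce to routine distance bookkeeping using this decomposition together with the coarsening of $R$-$*$-adjacency.

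First I would verify the easy inclusion $\bigcup_{v\in\cC}C_v \subseteq B_\cC$ by checking the two conditions in the definition of $B_\cC$: for $w \in C_v$ with $v\in\cC$, the bound $d_\infty(w,\cC)\le R/2\le R$ is immediate, and for any $u\in\good(h)$, the observation that $u\ne v$ both lie in $R\ZZ^d$ forces $d_\infty(u,v)\ge R$, whence $d_\infty(w,u)\ge R/2$ by the triangle inequality.

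The main content is the reverse inclusion. My approach is to work with the entire set $V^*(w)\subseteq R\ZZ^d$ of closest coarse vertices to $w$ (since $w$ may lie on the boundary between several cells) and to show $V^*(w)\cap\cC\ne\emptyset$. The key structural input is that any coarse vertices $v,v^*$ with $d_\infty(w,v)\le R$ and $v^*\in V^*(w)$ satisfy $d_\infty(v,v^*)\le 3R/2$, and since this distance is a multiple of $R$ it must lie in $\{0,R\}$. Thus every coarse vertex within $R$ of $w$ is either in $V^*(w)$ or $R$-$*$-adjacent to some element of $V^*(w)$. Applying this to the vertex $v'\in\cC$ guaranteed by $d_\infty(w,\cC)\le R$: if $v' \in V^*(w)$ we are done; otherwise $v'$ is $R$-$*$-adjacent to some $v^*\in V^*(w)$, and since $\cC$ is an $R$-$*$-component of $\bad(h)$, this forces $v^*\in\cC$, again giving $w\in C_{v^*}$. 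I expect the handling of the boundary case --- $w$ equidistant from several coarse vertices, some possibly in $\good(h)$ --- to be the only subtle point, and working with the full set $V^*(w)$ rather than a single nearest vertex with tie-breaking is what makes this transparent.

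Given the set equality, the second statement is immediate. Distinct $R$-$*$-components $\cC\ne\cC'$ of $\bad(h)$ contain no $R$-$*$-adjacent pair across them, so $d_\infty(\cC,\cC')$ is a multiple of $R$ strictly greater than $R$, hence at least $2R$. By the set equality, $B_\cC$ and $B_{\cC'}$ lie in the $R/2$-neighborhoods of $\cC$ and $\cC'$ respectively, so $d_\infty(B_\cC,B_{\cC'})\ge 2R - R/2 - R/2 = R$. Translating by vectors in $\{-R/8,\dots,R/8\}^d$ shifts each set by at most $R/8$ in $\ell^\infty$, so the resulting separation is at least $R - R/4 = 3R/4$, as claimed.
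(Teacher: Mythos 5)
The paper offers no proof of this Observation (it is asserted ``by construction''), so you are supplying one. Your decomposition $B_\cC=\bigcup_{v\in\cC}C_v$ is the right way to read the first claim, the forward inclusion is correct, and the final distance bookkeeping ($2R-R/2-R/2-2\cdot R/8=3R/4$) is exactly what the separation statement needs. The gap is in the reverse inclusion, at the step ``since $\cC$ is an $R$-$*$-component of $\bad(h)$, this forces $v^*\in\cC$.'' Adjacency of $v^*$ to $v'\in\cC$ places $v^*$ in $\cC$ only if $v^*$ is itself $\bad$; a component of $\bad(h)$ contains no $\good$ vertices. You flag the tie case and claim that working with the whole set $V^*(w)$ handles it, but it does not: nothing prevents \emph{every} element of $V^*(w)$ from being $\good$. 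That situation is compatible with $w\in B_\cC$, because the definition only demands $d_\infty(w,\good(h))\ge R/2$ and the nearest coarse vertices can sit at distance exactly $R/2$. Concretely, take $d=2$, $R=10$, $w=(5,3)$, with $(0,0)$ and $(10,0)$ in $\good(h)$ and $(0,10)\in\cC$: then $d_\infty(w,\cC)=7\le R$ and $d_\infty(w,\good(h))=5=R/2$, so $w\in B_\cC$, yet the only coarse vertices within $R/2$ of $w$ are the two $\good$ ones, so $w\notin\bigcup_{v\in\cC}C_v$. The exact equality you set out to prove is therefore false under the literal (non-strict) reading of the definition, and no completion of your Case 2 can rescue it.

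The repair is a convention rather than a new idea: take $R$ odd (or make one of the two thresholds in the definition of $B_\cC$ strict), so that every $w$ has a \emph{unique} nearest coarse vertex, at distance at most $(R-1)/2<R/2$. Then $w\in B_\cC$ forces that vertex to be $\bad$ (it cannot be $\good$ because of the requirement $d_\infty(w,\good(h))\ge R/2$), your $\{0,R\}$ dichotomy places it in $\cC$, and both inclusions go through; the cubes $\{w: d_\infty(w,v)\le (R-1)/2\}$ then genuinely tile $\ZZ^d$, as the Observation's later uses (partitioning $\cB_i$ into disjoint cubes) require. Without some such convention even the $3R/4$ separation can fail at tie points --- in the example above $w=(5,0)$ lies in both $B_{\{(0,10)\}}$ and $B_{\{(10,-10)\}}$ --- so this is worth stating explicitly rather than leaving as an unexamined boundary case.
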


\begin{figure}
    \centering
    \includegraphics[width=0.66\textwidth]{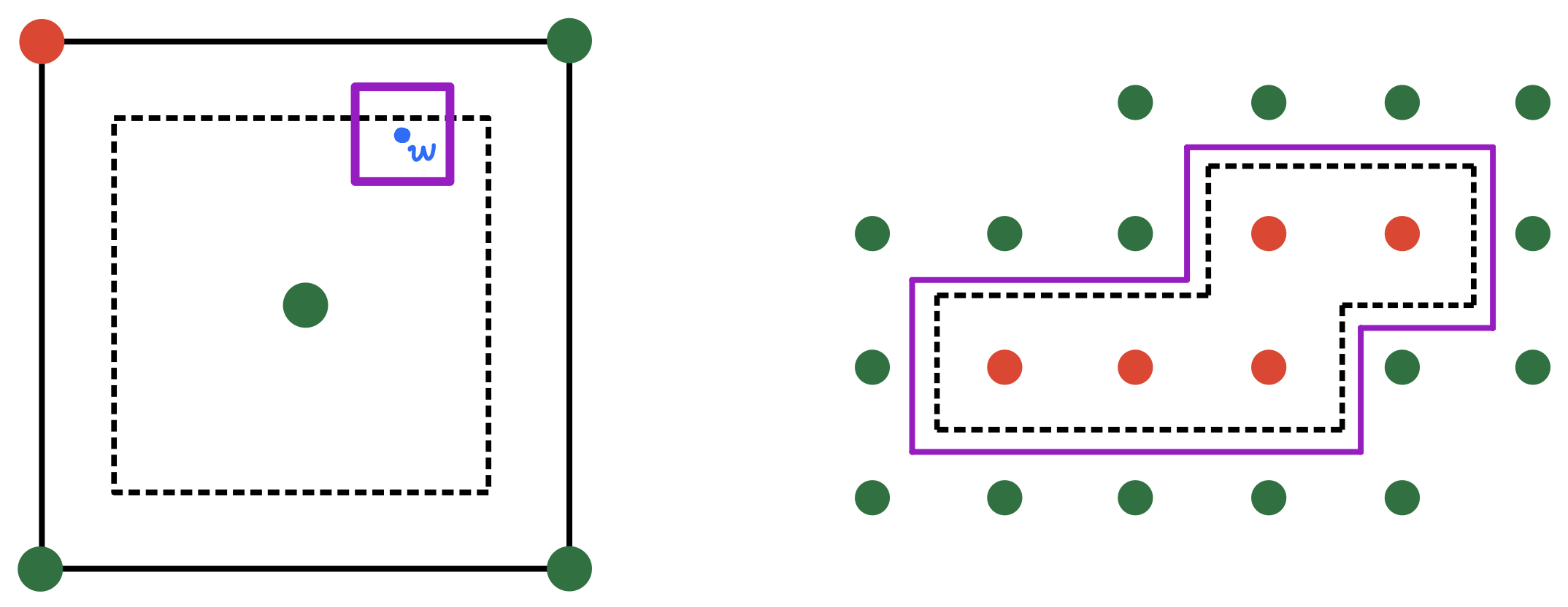}
    \caption{Left: a type 1 block (purple) centered at a vertex $w$ which is close to some coarse-grained vertex in $\good(h)$ (green). Right: a type 2 block (purple) enlarging $B_{\mathcal C}$ (dashed black) for an $R$-$*$-connected component $\mathcal C$ of the coarse-grained set $\bad(h)$ (red).}
    \label{fig:Type-1-2-tiles}
    \vspace{-0.2in}
\end{figure}

We now focus on verifying that the desired properties (1)--(3) hold for this family of blocks. 

\medskip
\noindent \emph{Property (1)}. The volume of a block clearly is at most a factor of $R$ larger than the largest volume of the bad clusters $\cC\in (\cC_{w,\bad}^{(R)}(\omega^h))_{w\in \Lambda_n^{(R)}}$. If we allowed a worst-case mixing time of exponential in the volume per block, the fact that we  need to choose $R$ to be diverging as $\Omega(\log L)$ means the mixing time of the worst block would be super-exponential in $L$, which translates to super-polynomial in $n$. We therefore wish to leverage the classical fact (dating back to canonical path arguments of~\cite{JS}) that the mixing time of a spin system on a graph is bounded by exponential in the cut-width of the graph, while isoperimetry in $\mathbb Z^d$ guarantees that the cut-width of any domain is sub-linear in its volume.  

We appeal to the following mixing time bound for Ising Glauber dynamics on general subsets of $\mathbb Z^d$ using this isoperimetric property. This was proved in~\cite[Theorem 4.12]{CMM-Disordered-magnets} in the absense of an external field, it is easy to verify that all steps in the proof go through mutatis mutandis in the presence of an arbitrary external field. 

\begin{lemma}\label{lem:surface-mixing-time-bound}
        For every $d\ge 2$,  $\beta>0$ there exists $C(d,\beta)>0$ such that the Ising dynamics on a finite connected set $A \subset \mathbb Z^d$ at inverse temperature $\beta$ and arbitrary fields $(h_x)_{x\in A}$ has  
        \begin{align*}
            \min_{\phi}\gap_A^\phi \ge C^{-1} \exp( -C |A|^{\frac{d-1}{d}})\,.
        \end{align*}
        where $\gap_A^\phi$ is the spectral gap of the Ising dynamics on $A$ with boundary conditions $\phi$. 
\end{lemma}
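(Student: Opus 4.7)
The plan is to establish this classical bound via the canonical paths method of Jerrum--Sinclair, with the crucial input being the edge-isoperimetric inequality in $\mathbb{Z}^d$. First, I would fix a sweeping enumeration $v_1,\ldots,v_{|A|}$ of $A$ such that, writing $S_k := \{v_1,\ldots,v_k\}$, the cut $|E_A(S_k, A\setminus S_k)|$ is at most $C_d |A|^{(d-1)/d}$ for every $k$. Such an enumeration exists by the standard sub-linear edge-isoperimetry in $\mathbb{Z}^d$; for instance, one may order the vertices of $A$ lexicographically after a sweep by hyperplanes orthogonal to a coordinate axis.

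For each pair $(\sigma,\tau)\in\{\pm 1\}^A\times\{\pm 1\}^A$, I would define the canonical path $\gamma_{\sigma,\tau}$ that flips spins in the order $v_1,\ldots,v_{|A|}$, changing $\sigma(v_i)$ to $\tau(v_i)$ whenever they differ; this has length at most $|A|$ in the Glauber transition graph. For a directed edge $(\eta,\eta')$ flipping $v_k$, each $(\sigma,\tau)$ whose path passes through $(\eta,\eta')$ is associated to the ``stitched'' configuration $\xi$ defined by $\xi|_{S_{k-1}} = \sigma|_{S_{k-1}}$ and $\xi|_{A\setminus S_{k-1}} = \tau|_{A\setminus S_{k-1}}$. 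Since $\eta|_{S_{k-1}} = \tau|_{S_{k-1}}$ and $\eta|_{A\setminus S_{k-1}} = \sigma|_{A\setminus S_{k-1}}$, the map $(\sigma,\tau)\mapsto \xi$ is injective for fixed $(\eta,\eta')$.

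The key observation---what makes the bound independent of the field---is that the multisets $\{\sigma(u),\tau(u)\}$ and $\{\xi(u),\eta(u)\}$ coincide at every $u\in A$, so all $h_u$-contributions cancel exactly in the difference $H(\sigma)+H(\tau)-H(\xi)-H(\eta)$. Expanding the remaining interaction terms gives
\[
H(\sigma)+H(\tau)-H(\xi)-H(\eta) = -\beta \sum_{\substack{(u,v)\in E_A \\ u\in S_{k-1},\, v\in A\setminus S_{k-1}}} (\sigma_u-\tau_u)(\sigma_v-\tau_v)\,,
\]
which is at most $4\beta |E_A(S_{k-1}, A\setminus S_{k-1})| \le 4\beta C_d |A|^{(d-1)/d}$ in absolute value, yielding $\mu(\sigma)\mu(\tau) \le e^{4\beta C_d |A|^{(d-1)/d}}\mu(\xi)\mu(\eta)$ uniformly in $(h_x)$. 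Running the same encoding with $\eta'$ in place of $\eta$ yields the analogous bound with $\mu(\eta')$; summing over $(\sigma,\tau)$ through $(\eta,\eta')$ and using injectivity and $\sum_\xi \mu(\xi) \le 1$ gives $\sum_{\gamma_{\sigma,\tau}\ni(\eta,\eta')} \mu(\sigma)\mu(\tau) \le e^{4\beta C_d |A|^{(d-1)/d}} \min(\mu(\eta),\mu(\eta'))$.

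Dividing by the continuous-time edge weight $Q(\eta,\eta') = \mu(\eta)\mu(\eta')/(\mu(\eta)+\mu(\eta'))$, multiplying by the path length $|A|$, and invoking the Jerrum--Sinclair inequality $\gap_A^\phi \ge 1/\rho$ yields $\gap_A^\phi \ge C^{-1} e^{- C |A|^{(d-1)/d}}$ for a suitable $C(d,\beta)$, after absorbing the polynomial factor $|A|$ into the exponent. Boundary conditions $\phi$ are handled identically by incorporating their contribution into effective external fields on vertices adjacent to $\partial A$, which still cancel in the stitching encoding. The main subtlety---and the step I expect to require the most care---is precisely the choice of the symmetric encoding $(\sigma,\tau)\mapsto\xi$: a naive encoding that uses only one of $\eta,\eta'$ as reference and leaves a single coordinate unmatched would produce a residual $2|h_{v_k}|$ term in the log-ratio, yielding a $\|h\|_\infty$-dependent bound and invalidating the field-free nature of the conclusion that the applications in Theorem~\ref{thm:fast-mixing-large-variance} require.
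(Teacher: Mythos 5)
Your canonical-paths argument itself is correct and is exactly the route the paper intends: the paper simply cites \cite{CMM-Disordered-magnets} (Theorem 4.12) and asserts that the proof goes through with arbitrary fields, and your symmetric stitching encoding $(\sigma,\tau)\mapsto\xi$ is precisely the right way to see that the field terms cancel, so the congestion is controlled by $e^{4\beta\,|E(S_{k-1},A\setminus S_{k-1})|}$ uniformly in $h$ and in the boundary condition. The handling of $Q(\eta,\eta')$, the injectivity of the encoding, and the absorption of the factor $|A|$ are all fine.

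The genuine gap is the geometric input: the existence of an enumeration of an \emph{arbitrary} finite connected $A\subset\mathbb Z^d$ all of whose prefix cuts are $O(|A|^{(d-1)/d})$. Your proposed construction --- sweeping by coordinate hyperplanes and ordering lexicographically --- does not achieve this. Already in $d=2$, take $A=\{1,2\}\times\{1,\dots,m\}$, so $|A|=2m$: the sweep in the $x_1$-direction produces the prefix $\{1\}\times\{1,\dots,m\}$, whose cut to the complement has $m\gg |A|^{1/2}$ edges. More to the point for this paper, the type-2 blocks of Definition~\ref{def:block-dynamics-blocks} are unions of side-$R$ cubes along a lattice animal $\cC$, and if $\cC$ is, say, an L-shape, every axis-parallel sweep encounters a slice of cardinality linear in $|B_\cC|/R$, which is much larger than $|B_\cC|^{(d-1)/d}$. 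The correct statement (cut-width of any finite subset of $\mathbb Z^d$ is $O(N^{(d-1)/d})$) is true but requires a recursive bisection built on a separator theorem --- in $d=2$ the planar separator theorem suffices, and in general one needs a geometric separator of size $O(N^{(d-1)/d})$ splitting $A$ into balanced pieces (this is what \cite{CMM-Disordered-magnets} supplies). Without replacing your lexicographic sweep by such a recursive construction, your argument only yields $\gap_A^\phi\ge C^{-1}e^{-C|A|}$, which is strictly weaker than the lemma and would degrade the exponent $L^{(d-1)/d}$ in Theorem~\ref{thm:fast-mixing-large-variance} and Theorem~\ref{mainthm:SSM}.
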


Thus, as long as the largest connected component of $\bad$ blocks in the coarse-graining has volume at most $L$, we deduce a bound on its inverse-gap as sub-exponential in~$L$. This is immediate from Lemma~\ref{lem:surface-mixing-time-bound} together with each block in $\cB$ having volume at most $R^d L$. 

\begin{Cor}\label{cor:single-block-mixing}
    If $h$ is such that $\max_{w\in \Lambda_n^{(R)}} |\cC_{w,\bad}^{(R)}(\omega^h)|\le L$,
    then if $\cB= (\cB_i)_{i}$ are the blocks constructed out of $\good(h)$ in Definition~\ref{def:block-dynamics-blocks}, then 
    \begin{align*}
        \min_{i}  \min_\phi\gap_{\cB_i}^{\phi}\ge C^{-1} \exp( - C R^{d-1} L^{\frac{d-1}{d}}) \,.
    \end{align*}
\end{Cor}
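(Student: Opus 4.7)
The plan is to reduce the corollary directly to Lemma~\ref{lem:surface-mixing-time-bound} via a volume estimate on each block produced by Definition~\ref{def:block-dynamics-blocks}. Specifically, under the hypothesis $\max_{w}|\cC_{w,\bad}^{(R)}(\omega^h)|\le L$, I will argue that every connected component of every $\cB_i\in \cB$ has at most $c_d R^d L$ vertices, for a dimension-dependent constant $c_d$. Feeding this into Lemma~\ref{lem:surface-mixing-time-bound} gives $\min_\phi \gap^\phi \ge C^{-1}\exp(-C(c_d R^d L)^{(d-1)/d})$, and absorbing $c_d^{(d-1)/d}$ into $C$ produces exactly the asserted bound $C^{-1}\exp(-C R^{d-1} L^{(d-1)/d})$.

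For Type~1 blocks the volume bound is trivial: an $\ell^\infty$-ball of radius $R/8$ inside $\Lambda_n$ contains at most $(R/4+1)^d \le c_d R^d \le c_d R^d L$ sites, and it is connected as a subset of $\mathbb Z^d$. For Type~2 blocks, each block is a translate of some $B_\cC$, where $\cC \in \{\cC_{w,\bad}^{(R)}(\omega^h)\}_{w}$ is an $R$-$*$-connected component of $\bad(h)$. By Definition~\ref{def:block-dynamics-blocks}, $B_\cC$ is contained in the set of vertices at $\ell^\infty$-distance at most $R$ from $\cC$ (viewed inside $\Lambda_n$), which is itself contained in the union over $v\in \cC$ of cubes of side-length $2R+1$ centered at $v$. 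Hence $|B_\cC|\le |\cC|\cdot (2R+1)^d \le c_d L R^d$, and the same bound persists after translating by $a\in\{-R/8,\dots,R/8\}^d$ and intersecting with $\Lambda_n$.

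Strictly speaking, Lemma~\ref{lem:surface-mixing-time-bound} is stated for a connected set $A$, so if a block $\cB_i$ is disconnected I will handle each connected component separately. Since Glauber dynamics on a disjoint union decomposes as independent dynamics on each connected component, $\gap_{\cB_i}^\phi$ equals the minimum of the spectral gaps on the components, and each such component is a connected subset of $\mathbb Z^d$ of volume at most $c_d R^d L$. Thus the lemma applies componentwise and yields the desired uniform lower bound.

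There is no real obstacle here: the corollary is essentially an accounting step that packages the volume control built into Definition~\ref{def:block-dynamics-blocks} with the isoperimetric mixing bound of Lemma~\ref{lem:surface-mixing-time-bound}. The only mild subtlety is ensuring that the translations in the construction of Type~2 blocks do not inflate volumes beyond $c_d R^d L$, which follows immediately from the fact that each translate $B_\cC+a$ is a subset of the $\ell^\infty$-$(R+R/8)$-neighborhood of $\cC$, absorbing the extra factor into $c_d$.
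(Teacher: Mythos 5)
Your proposal is correct and follows exactly the paper's argument: the paper also deduces the corollary immediately from Lemma~\ref{lem:surface-mixing-time-bound} combined with the observation that each block has volume at most $R^d L$, so that $|A|^{(d-1)/d} = R^{d-1}L^{(d-1)/d}$. Your additional care about the Type~2 translates and the componentwise treatment of possibly disconnected blocks is sound but not something the paper spells out.
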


\medskip
\noindent \emph{Property (2).}
We now show using the definition of the coarse-graining and the blocks, that they ensure a block update doesn't increase the discrepancy between two configurations too much. 

\begin{lemma}\label{lem:expected-hamming-distance}
    Fix any realization of $h$, and any block $\cB_i$ constructed out of $\good(h)$ per Definition~\ref{def:block-dynamics-blocks}. 
   For any boundary condition $\xi$ let $\xi^y$ be the boundary condition that agrees with $\xi$ everywhere except at $y$, where it takes the opposite spin. For every $y\in \partial \cB_i$, and $(\xi,\xi^y)$, there exists a coupling $\mathbf{P}$ of $\sigma\sim \mu_{\cB_i}^\xi$ and $\sigma^y\sim \mu_{\cB_i}^{\xi^y}$ such that 
   \begin{align*}
       \mathbf{E}[d_H(\sigma,\sigma^y)] \le  (\log R)^2 + |\cB_i| e^{ - R/8} + O(1)\,,
   \end{align*}
   where $d_H$ denotes Hamming distance. 
\end{lemma}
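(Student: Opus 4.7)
The plan is to construct $\mathbf P$ as the monotone grand coupling of $\mu_{\cB_i}^\xi$ and $\mu_{\cB_i}^{\xi^y}$. WLOG assume $\xi\le\xi^y$ (swap roles otherwise); FKG then yields $\sigma\le\sigma^y$ a.s.\ under $\mathbf P$, so
\[
\mathbf E[d_H(\sigma,\sigma^y)] \;=\; \sum_{x\in\cB_i}\mathbf P(\sigma_x\ne\sigma_x^y) \;=\; \sum_{x\in\cB_i} d_{\sTV}\bigl(\mu_{\cB_i}^\xi(\sigma_x\in\cdot),\,\mu_{\cB_i}^{\xi^y}(\sigma_x\in\cdot)\bigr),
\]
and the task reduces to controlling these per-vertex TV distances.

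I would partition $\cB_i$ into three zones according to the distance $\ell(x):=d(x,y)$: a near zone $S_1=\{\ell(x)\le (\log R)^2\}$, a mid zone $S_2=\{(\log R)^2<\ell(x)\le R/8\}$, and a far zone $S_3=\{\ell(x)>R/8\}$. The near zone is handled by the trivial bound $d_{\sTV}\le 1$, noting $|S_1|\lesssim(\log R)^{2d}$ which is absorbed into the $(\log R)^2$ term of the statement (with $d$-dependent constants understood). The mid and far zones are where the $\good$-block property does the work. Extracting the common content of Definitions \ref{def:anticoncentrated-good-blocks} and \ref{def:SSM-good-blocks}: for any $v\in\good(h)$ and $w\in B_v$, any ball $B\ni w$ with $w$ in its interior, any $z\in\partial B$ at distance $\ell\in[(\log R)^2,R/4]$ from $w$, and any boundary pinning on $\partial B\setminus\{z\}$, the TV distance of the marginal of $\sigma_w$ between the two boundary conditions differing only at $z$ is at most $e^{-\ell/C_*}$. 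Under Definition \ref{def:anticoncentrated-good-blocks} this is deduced from the sub-critical influence percolation $\omega^\influence$ controlling disagreement propagation, while under Definition \ref{def:SSM-good-blocks} it is the definition itself.

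For a type 1 block $\cB_i=B_{R/8}(w_0)$, the anchoring condition $d(w_0,\good(h))\le 3R/4$ yields $\cB_i\subset B_v$ for some good $v$ (since $R/8+3R/4<R$). Applying the good property with $B=\cB_i$, $w=x$, $z=y$ directly gives $d_{\sTV}(\mu_{\cB_i}^\xi(\sigma_x),\mu_{\cB_i}^{\xi^y}(\sigma_x))\le e^{-\ell(x)/C_*}$ for $x$ in the interior. Summing over $S_2$ yields $\sum_{\ell\ge(\log R)^2}\ell^{d-1}e^{-\ell/C_*}=O(1)$, and summing over $S_3$ yields $\le|\cB_i|e^{-R/(8C_*)}$, which matches the $|\cB_i|e^{-R/8}$ term after adjusting $C_*$.

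The hard part is a type 2 block $\cB_i=B_\cC+a$. By the construction in Definition \ref{def:block-dynamics-blocks} and Observation \ref{obs:type-2-blocks}, every $y\in\partial\cB_i$ lies either within distance ${\le}5R/8$ of some $v\in\good(h)$ or at distance ${\ge}7R/8$ from $\cC$; in either case there is a good $v$ with $y\in B_v$, and for any $x\in\cB_i$ with $x\in B_v$ as well, the good property transfers directly with decay $e^{-\ell(x)/C_*}$. The remaining case is $x$ deep inside the thickened bad cluster, not covered by any single $B_v$ that also contains $y$. Here I would use that $\cC$ is enveloped by a good shell of thickness $\ge R/2$ (the set excluded from $B_\cC$), so that any influence from $y$ to $x$ must traverse this shell; I would condition on the spin configuration in the shell and apply the good property to a ball of radius ${\sim}R/2$ inside the shell surrounding $y$, yielding a decay factor of $e^{-R/(2C_*)}$ for the entire interior. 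Multiplied by $|\cB_i|\le R^d L$, this contributes well within $|\cB_i|e^{-R/8}$ after adjusting $C_*$. Combining all three zones across both block types gives the stated bound, with the type 2 geometric case analysis being the technical crux.
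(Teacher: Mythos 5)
Your proposal follows essentially the same route as the paper's proof: couple the two measures outward from $y$, use the $\good$-block property to kill the influence at scales $\ell\in[(\log R)^2,R/8]$, and pay a crude volume bound beyond scale $R/8$. Writing $\mathbf E[d_H(\sigma,\sigma^y)]=\sum_x\mathbf P(\sigma_x\ne\sigma_x^y)=\sum_x d_{\sTV}\bigl(\mu_{\cB_i}^\xi(\sigma_x\in\cdot),\mu_{\cB_i}^{\xi^y}(\sigma_x\in\cdot)\bigr)$ under the monotone coupling is an equivalent bookkeeping to the paper's tail sum over $\mathbf{P}(d_H\ge \ell^d)$, and the per-vertex estimates you invoke are exactly the ones the paper establishes (via the disagreement-percolation coupling under Assumption~\ref{ass:anticoncentrated-field}, and via single-flip interpolation over an inscribed box under Assumption~\ref{assump:SSM-RFIM0}).

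Two places in your write-up overreach the definitions and need repair, though both are fixable. First, both Definition~\ref{def:anticoncentrated-good-blocks} and Definition~\ref{def:SSM-good-blocks} control only scales up to $R/8$ (cluster sizes $r\le R/8$; boxes of radius $r\le R/8$ and flip-distances $\ell\le R/4$), so ``apply the good property to a ball of radius $\sim R/2$'' is not licensed. The fix is to place the separating surface at graph distance $R/8$ from $y$: every $w$ there lies within $\ell^\infty$-distance $R$ of the good coarse vertex near $y$, so the good property applies and yields a per-far-vertex bound of order $R^{2(d-1)}e^{-R/(8C_*)}$; in the anticoncentrated case no surface is needed at all, since $d_{\sTV}\le \mathbb P_\eta\bigl(|\cC_y(\omega^\influence)|\ge R/8 \mid h\bigr)\le e^{-R/8}$ directly for any $x$ at distance $>R/8$. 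Second, for $x$ in the mid zone of a type-2 block you cannot take $B=\cB_i$ in Definition~\ref{def:SSM-good-blocks}, since $\cB_i$ is not a box; one must inscribe a box of radius $\ell(x)$ around $x$ whose near boundary lies on $\partial\cB_i$ (possible by Observation~\ref{obs:type-2-blocks}), and interpolate single flips over the far part of $\partial B$, which costs an extra harmless factor $\ell^{d-1}$. Finally, as in the paper's own proof, the constants you obtain ($(\log R)^{2d}$ and $e^{-R/(8C_*)}$) match the stated bound only after absorbing $d$- and $C$-dependent adjustments.
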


We have to prove Lemma~\ref{lem:expected-hamming-distance} separately depending on the definition of $\good$ blocks we used (i.e., depending on which of Assumption~\ref{ass:anticoncentrated-field} and Assumption~\ref{assump:SSM-RFIM0} we work under). 

\begin{proof}
    If $\cB_i$ is a type (1) block per Definition~\ref{def:block-dynamics-blocks}, then every vertex on its boundary is at distance at most $7R/8$ from an element of $\good(h)$. If it is a type (2) block, every boundary vertex of $B_\cC$ is exactly distance $R/2$ from $\good(h)$, so any of its translates by $R/8$ must be within distance $5R/8$ from $\good(h)$. In either case, $y$ must have distance strictly less than $7R/8$ from $\good(h)$.

    \medskip
    \noindent    \emph{Proof under $\good$-blocks of Definition~\ref{def:anticoncentrated-good-blocks}}. The above implies by definition of $\good$, that 
\begin{align}\label{eq:boundary-of-block-good}
    \mathbb P_\eta(|\cC_y(\omega^\influence)| \ge r \mid h) \le e^{ - r} \quad \text{ for all }(\log R)^2 \le r\le R/8\,.
\end{align}  
We use the random-variables $\rho$ to construct a coupling of $\sigma\sim \mu_{\cB_i}^\xi$ and $\sigma^y\sim\mu_{\cB_i}^{\xi^y}$ as follows. 
\begin{enumerate}
    \item Assign every vertex an i.i.d. Unif$[0,1]$ random variable $U_x$.
    \item One at a time starting from neighbors of $y$, use the uniforms to generate $\sigma,\sigma^y$ in a coupled manner, where for each vertex $x$ having $|h_x|\ge K$, if $U_x \le 1-\rho$ (for $\rho$ from~\eqref{eq:rho-parameter}) necessarily both $\sigma_x$ and $\sigma_x'$ take the value $\sgn(h_x)$. 
    \item As soon as we have encountered a set of vertices separating $y$ from the rest of $\cB_i$ all of which take the value of $\sgn(h_x)$ in both $\sigma,\sigma^y$ use the identity coupling on the remainder of the configuration (since it will have the same induced boundary conditions on it in both $\sigma,\sigma^y$). 
\end{enumerate}
This is easily checked to be a valid coupling. Moreover, generating the $\eta_x$ (used to construct $\omega^\influence$) via $\mathbf{1}_{U_x \ge 1-\rho}$, the boundary of $\cC_y(\omega^\influence)$ always serves as a set on which $\sigma$ and $\sigma^y$ both take the value of $\sgn(h_x)$, so in particular, this coupling satisfies 
\begin{align*}
    \sigma(\cB_i \setminus \cC_y(\omega^\influence))  = \sigma^y(\cB_i \setminus \cC_y(\omega^\influence))\,.
\end{align*}
Therefore, we have 
\begin{align*}
    \mathbf E[d_H(\sigma,\sigma')] \le (\log R)^2 + \sum_{r = (\log R)^2}^{R/8} r \mathbb P_\eta(|\cC_w(\omega^\influence)|\ge r \mid h) + |\cB_i| \mathbb P_\eta(|\cC_w(\omega^\influence)| \ge R/8 \mid h)\,.
\end{align*}
Plugging in the bound of \eqref{eq:boundary-of-block-good} on the probabilities above, we evidently get the desired bound up to change of the constant $C$. 

\medskip
\noindent \emph{Proof under $\good$-blocks of Definition~\ref{def:SSM-good-blocks}}. 
    As reasoned at the beginning of the proof, by construction $y$ is at distance at most $7R/8$ from $\good(h)$. 
    We now construct a monotone coupling $\mathbf{P}$ of $\sigma\sim \mu_{\cB_i}^\xi$ and $\sigma^y\sim \mu_{\cB_i}^{\xi^y}$ as follows. 
    \begin{enumerate}
        \item Assign every vertex an i.i.d. Unif$[0,1]$ random variable $U_x$
        \item Iteratively for $\ell \ge 1$, expose the spins $(\sigma_x,\sigma^y_x)_{x\in \cB_i: d_1(x,y)=\ell}$ coupled via the monotone coupling using the common random variables $U_x$. 
    \end{enumerate}
    Notice that under this coupling, if $\sigma_x = \sigma^y_x$ for all $x\in \cB_i: d_1(x,y) = \ell$, then $\sigma$ and $\sigma^y$ agree on all vertices at distance greater than $\ell$ from $y$. This is because on that event, the induced boundary conditions on the un-revealed spins of $\cB_i$ will agree completely. In particular, in that case, $d_H(\sigma,\sigma^y)$ would be at most $\ell^d$. Furthermore, we always have the deterministic bound $d_H(\sigma,\sigma^y)\le |\cB_i|$. Therefore, we can bound 
    \begin{align}\label{eq:expected-hamming-distance}
        \mathbf E[d_H(\sigma,\sigma^y)] & \le (\log R)^{2d} +  \sum_{\ell = (\log R)^2}^{R/8} \mathbf{P}(d_H(\sigma,\sigma^y) \ge \ell^d) + |\cB_i| \mathbf{P}(d_H(\sigma,\sigma^y)\ge (R/8)^d)\,.
    \end{align}
    By the above reasoning, the tail probabilities for $(\log R)^2 \le \ell \le R/8$ are bounded as 
    \begin{align}\label{eq:Hamming-distance-tail}
           \mathbf{P}(d_H(\sigma,\sigma^y)\ge \ell^d)\le  \ell^{d-1} \max_{x\in \cB_i: d_1(x,y) = \ell} \mathbf{P} (\sigma_x \ne \sigma^y_x)\,.
    \end{align}
    Since the coupling $\mathbf{P}$ is monotone, the disagreement probability on the right is exactly 
    \begin{align}\label{eq:coupling-prob-tv-dist}
        \mathbf{P}(\sigma_x \ne \sigma^y_x) = d_{\sTV}(\mu_{\cB_i}^{\xi}(\sigma_x\in \cdot), \mu_{\cB_i}^{\xi^y}(\sigma_x^y\in \cdot))\,.
    \end{align}
    For that tv-distance, inscribe a ball $B$ of $\ell^\infty$-radius $\ell$, say, in $\cB_i$, such that all points in $\partial B$ at graph distance less than $\ell$ from $x$ belong to $\partial \cB_i$. This ensures that the boundary conditions induced by $\mu_{\cB_i}^\xi,\mu_{\cB_i}^{\xi^y}$ on the vertices in $\partial B$ at distance less than $\ell$ from $x$ are identical. (This inscription is always possible since by Observation~\ref{obs:type-2-blocks}, all $\cB_i$ can be partitioned into disjoint cubes of side-length $R/2$, and $\ell \le R/8$.) The vertices in $\partial B$ at distance at least $\ell$ from $x$ may all differ; call two such boundary conditions $\zeta,\zeta'$ on $\partial B$. By flipping one spin at a time to interpolate between $\zeta, \zeta'$, by~\eqref{eq:SSM-good-event} (which applies since $x$ is at distance at most $R/8$ from $y$ and thus at distance at most $R$ from a good vertex $v\in \good(h)$),
    \begin{align*}
        d_{\sTV}(\mu_{\cB_i}^\xi(\sigma_x),\mu_{\cB_i}^{\xi^y}(\sigma_x))\le \sum_{z\in \partial B: d(z,x)\ge \ell} \max_{\zeta\in \{\pm 1\}^{\partial B\setminus z}} d_{\sTV}(\mu_{B}^{\zeta^+}(\sigma_x),\mu_{B}^{\zeta^-}(\sigma_x)) \le \ell^{d-1} e^{ - \ell/C_*}\,.
    \end{align*}
    Altogether, plugging this into~\eqref{eq:coupling-prob-tv-dist}, then~\eqref{eq:Hamming-distance-tail}, then~\eqref{eq:expected-hamming-distance}, we get the bound
    \begin{align*}
        \mathbf{E}[d_H(\sigma,\sigma')] \le (\log R)^{2d} + |\cB_i| e^{ - R/16C_*} + O(1)\,,
    \end{align*}
    as desired.
\end{proof}

\medskip
\noindent \emph{Property (3).}
The last property we check is that no vertex is placed on the boundary of too many blocks compared to the number to which it is interior. 

\begin{lemma}\label{lem:vertices-boundary-of-blocks-vs-interior}
    For any realization of $h$, and $\cB = (\cB_i)$ constructed per Definition~\ref{def:block-dynamics-blocks}, we have 
    \begin{align*}
        \max_{w\in \Lambda_n} \frac{|\{i: w\in \partial \cB_i\}|}{|\{i: w\in \cB_i\}|} \le \frac{32 d 2^d}{R}\,.
    \end{align*}
\end{lemma}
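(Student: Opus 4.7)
\emph{Plan.}
The plan is to prove the pointwise ratio bound by separating the block counts into Type 1 and Type 2 contributions (per Definition~\ref{def:block-dynamics-blocks}). Fix $w\in\Lambda_n$. Let $V_1\subseteq\Lambda_n$ be the set of Type 1 centers (vertices at $\ell^\infty$-distance at most $3R/4$ from $\good(h)$) and let $X=\bigsqcup_\cC B_\cC$, which is a disjoint union since, by Observation~\ref{obs:type-2-blocks}, distinct $B_\cC$'s are at pairwise $\ell^\infty$-distance at least $R/2$. Via the change of variable $a\mapsto w-a$ (so that $a$ ranges over $S$ iff $w-a$ ranges over $B_{R/8}(w)$), one has
\[
\sum_\cC \big|\{a\in S: w\in B_\cC+a\}\big| = |X\cap B_{R/8}(w)|, \qquad \sum_\cC \big|\{a\in S: w\in \partial(B_\cC+a)\}\big| = |\partial X\cap B_{R/8}(w)|.
\]
Combined with the Type 1 counts $|V_1\cap B_{R/8}(w)|$ and $|V_1\cap\mathrm{sh}(w)|$, where $\mathrm{sh}(w)=\{v:w\in \partial B_{R/8}(v)\}$ is a shell of size at most $2d(R/4+1)^{d-1}$, this gives explicit formulas for the numerator $N^{bd}(w)$ and denominator $N^{in}(w)$.

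The key lower bound $N^{in}(w)\ge (R/8+1)^d$ follows from the covering property $V_1\cup X\supseteq \Lambda_n$, which I would verify as follows: any $v\in\Lambda_n$ has a nearest vertex $u\in\Lambda_n^{(R)}$ at $\ell^\infty$-distance at most $R/2$. If $u\in\good(h)$ then $v\in V_1$; otherwise $u$ lies in a bad cluster $\cC$, and either the nearest good vertex to $v$ is within $3R/4$ (so $v\in V_1$) or farther than $3R/4\ge R/2$, in which case $v$ satisfies both defining conditions of $B_\cC$ (within $R/2\le R$ of $\cC$, and at distance $\ge R/2$ from every good vertex), hence $v\in X$. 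Consequently $N^{in}(w)\ge |B_{R/8}(w)\cap\Lambda_n|\ge (R/8+1)^d$, the latter being sharp when $w$ sits at a corner of $\Lambda_n$.

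For the numerator, the Type 1 contribution is bounded by $|\mathrm{sh}(w)|\le 2d(R/4+1)^{d-1}$. For the Type 2 contribution $|\partial X\cap B_{R/8}(w)|$, the $R/2$-separation of distinct $B_\cC$'s, exceeding the diameter $R/4$ of $B_{R/8}(w)$, implies the ball meets at most one $\partial B_\cC$. Now $B_\cC$ is a union of $R$-cubes centered at points of $\cC$ with interiors of $R/2$-cubes around $\good(h)$ carved out, so $\partial B_\cC$ is a union of codimension-one slabs lying in hyperplanes of the form $x_i = c_i\pm(R+1)$ or $x_i = g_i\pm(R/2-1)$ with $c,g\in R\mathbb Z^d$. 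Two parallel such slabs are at $\ell^\infty$-distance either zero or at least $R/2-2>R/4$ (for $R$ a sufficiently large constant), so within $B_{R/8}(w)$ there are at most $2d$ distinct slabs, one per signed coordinate direction, each contributing at most $(R/4+1)^{d-1}$ lattice points. Thus $|\partial X\cap B_{R/8}(w)|\le 2d(R/4+1)^{d-1}$ and $N^{bd}(w)\le 4d(R/4+1)^{d-1}$.

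Combining and using $(R/4+1)/(R/8+1)\le 2$ together with $1/(R/8+1)\le 8/R$,
\[
\frac{N^{bd}(w)}{N^{in}(w)} \le \frac{4d(R/4+1)^{d-1}}{(R/8+1)^d} \le \frac{16 d\cdot 2^d}{R} \le \frac{32 d\cdot 2^d}{R},
\]
as claimed; the factor $2^d$ absorbs the worst-case corner geometry of $\Lambda_n$. The main technical point is the slab-separation observation for $\partial B_\cC$, which genuinely uses that the coarse-graining centers lie in $R\mathbb Z^d$ with spacing exceeding $B_{R/8}(w)$'s diameter; without this, parallel slabs could accumulate within the ball and the codimension-one bound on $|\partial X\cap B_{R/8}(w)|$ would deteriorate.
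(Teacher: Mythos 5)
Your proof is correct in substance and reaches the stated bound, but it is organized differently from the paper's argument. The paper splits into two cases according to $d_\infty(w,\good(h))\lessgtr 5R/8$ and in each case directly exhibits $(R/4)^d$ blocks containing $w$ in their interior (all type‑1 balls centered in $B_{R/8}(w)$ in the first case, all translates of the nearest $B_\cC$ in the second), then bounds the boundary count by $2\cdot 2d(R/4)^{d-1}$ and patches $\Lambda_n$-boundary effects with the factor $2^d$. Your route instead converts both counts into volume counts over $B_{R/8}(w)$ via the translation change of variables, and replaces the case analysis by the single covering statement $V_1\cup X\supseteq\Lambda_n$, which gives the denominator bound $|B_{R/8}(w)\cap\Lambda_n|\ge(R/8+1)^d$ uniformly, including at corners of $\Lambda_n$. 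This is arguably a cleaner treatment of the lattice boundary than the paper's $2^d$ patch; the price is that you must analyze $\partial B_\cC$ hyperplane by hyperplane, which the paper shortcuts by invoking Observation~\ref{obs:type-2-blocks} (note that by that observation $B_\cC$ is \emph{exactly} a union of side-$R$ cubes centered at $\cC$, so the ``carved-out $R/2$-cubes'' in your description are redundant, though harmless).

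One intermediate claim is stated incorrectly: it is not true that two parallel slabs of $\partial B_\cC$ are at $\ell^\infty$-distance $0$ or at least $R/2-2$. Opposite-facing parallel faces can be at distance $2$ (e.g., the faces $x_i=c_i+R+1$ and $x_i=c_i'-R-1$ with $c_i'-c_i=2R$, or two carved faces flanking a width-one sliver of $B_\cC$ between consecutive good sites). What is true, and what your conclusion ``one slab per \emph{signed} coordinate direction'' actually requires, is that two parallel faces with the \emph{same outward orientation} lie at distance $0$ or at least $R/2$: the $+e_i$-facing hyperplanes have offsets in $\{c_i+R+1\}\cup\{g_i-R/2+1\}$ with $c_i,g_i\in R\mathbb Z$, whose pairwise differences lie in $R\mathbb Z\cup(R\mathbb Z+R/2)$. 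Since $R/2>R/4+1$, at most one hyperplane per signed direction meets $B_{R/8}(w)$, recovering your bound of $2d(R/4+1)^{d-1}$. With that repair the argument is complete.
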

\begin{proof}
    Fix any vertex $w\in \Lambda_n$ We consider different cases depending on the distance of $w$ to $\good(h)$ and $\bad(h)$. 
    
    If $w$ is at distance at most $5R/8$ from an element of $\good(h)$, then the $\ell^\infty$-balls of radius $R/8$ centered at every vertex distance less than $R/8$ are included in $\cB$, and they all have $w$ in their interior, so that $$|\{i: w\in \cB_i\}|\ge (R/4)^d\,.$$ 
    The number of type-1 blocks having $w$ on their boundary is at most $(2d) (R/4)^{d-1}$ for those centered at vertices exactly at distance $R/8$ from $w$. By Observation~\ref{obs:type-2-blocks}, all type-2 blocks having $w$ on their boundary must be originating from the same $\cC$. Moreover, at most $2d (R/4)^{d-1}$ of the translates of $B_\cC$ can have $w$ on their boundary since $B_\cC$'s boundary is a subset of the boundaries of its constituent $R\times \cdots \times R$ cubes from the second part of Observation~\ref{obs:type-2-blocks}. Combining the above, 
    \begin{align*}
        \frac{|\{i: w\in \partial \cB_i\}|}{|\{i: w\in \cB_i\}|} \le \frac{32d}{R} \quad \text{ if $d_\infty(w,\good(h)) \le 5R/8$}
    \end{align*}

    If $w$ is at distance at least $5R/8$ from $\good(h)$, clearly it must also have distance at most $3R/8$ from an element of $\bad(h)$. If $\cC$ is the cluster of bad sites of the coarse graining it is closest to, then $w$ must be interior to all $(R/4)^{d}$ translates of $B_\cC$. On the other hand, it can be on the boundary only of type-1 blocks centered at vertices exactly distance $R/8$ away, there being at most $(2d)(R/4)^{d-1}$ many of these, that leaves 
    \begin{align*}
        \frac{|\{i: w\in \partial \cB_i\}|}{|\{i: w\in \cB_i\}|} \le \frac{16d}{R} \quad \text{ if $d_\infty(w,\good(h)) \ge 5R/8$}\,.
    \end{align*}
    Together, these bounds give the claim for vertices $w$ at distance at least $R$ from the boundary of $\Lambda_n$ when viewed as a subset of $\mathbb Z^d$. 

    If $w$ at distance at most $R$ from the boundary of $\Lambda_n$, there may be as few as a $1/2^d$ fraction of the number of blocks containing $w$ in their interiors as for $w$ far from the boundary (e.g., if $w$ is a corner of $\Lambda_n$). The number of blocks containing $w$ on their boundary is upper bounded by the same number as when $w$ is far from the boundary. Thus, multiplying the earlier bound of $32 d/R$ by $2^d$, we get the worst-possible ratio one could arrive at. 
\end{proof}

\subsection{From block dynamics to a mixing time bound}

We now use the blocks from the previous section to deduce the claimed bound on the mixing time of the $\RFIM$ on $\Lambda_n$ with large variance. Given the properties (1)--(3) proved in the previous subsection, stitching them together is an application of the following block dynamics tool in Markov chain mixing times, which follows from decomposition of the variational form of the spectral gap, and which has found many uses in bounding spin system mixing times.

\begin{Def}\label{def:block-dynamics}
    If blocks $(\cB_i)$ cover $V$, the \emph{block dynamics} with blocks $\cB$ is the continuous-time Markov chain that assigns each block a rate-1 Poisson clock, and when the clock corresponding to block $\cB_i$ rings, it fully resamples the configuration on $\cB_i$ conditional on the configuration on~$V \setminus \cB_i$. 
\end{Def}

\begin{Prop}[{\cite[Proposition 3.4]{MartinelliLectureNotes}}]\label{prop:block-dynamics}
    Consider a graph $G = (V,E)$, and blocks $\cB_1,...,\cB_k \subset V$ such that $\bigcup_i \cB_i = V$. 
    Let $\gap_G$  and $\gap_{\cB_i}^\phi$ be the spectral gaps of 
continuous-time single-site dynamics on $G$ and on $\cB_i$ with boundary condition $\phi$. Let $\gap_{\cB}$ be the spectral gap of the corresponding block dynamics, per Definition~\ref{def:block-dynamics}. Letting $\chi = \sup_{v\in V} \#\{i:  v \in \cB_i\}$, we have
\begin{align*}
    \gap_{G} \ge \chi^{-1} \gap_{\cB}\inf_{i}\inf_{\phi} \gap_{\cB_i}^\phi\,.
\end{align*}
where the infimum over $\phi$ is over boundary conditions on $\cB_i$, i.e., assignments to $V \setminus \cB_i$. 
\end{Prop}

The proof of Theorem~\ref{thm:fast-mixing-large-variance} essentially breaks into the following two lemmas controlling the inverse gap of the block dynamics and the individual blocks (the quantity $\chi$ is easily bounded). 

\begin{lemma}\label{lem:block-dynamics-gap}
    Fix $\good(h)$ and let $\cB_i$ be constructed per Definition~\ref{def:block-dynamics-blocks}. Then, if we have that $R \ge C_0 \log (\max_i |\cB_i|)$ for a large enough constant $C_0$, there is a constant $C'$ such that 
    \begin{align*}
        \gap_{\cB}^{-1} \le C'\,.
    \end{align*}
\end{lemma}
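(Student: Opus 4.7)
The plan is to prove the lemma via a continuous-time path coupling argument: under the grand coupling of the block dynamics, I will show that any two configurations $(\sigma,\sigma')$ differing at a single vertex $v$ contract in expected Hamming distance at a rate bounded below by an absolute positive constant. Standard continuous-time path coupling (the natural continuous-time analog of the Bubley--Dyer technique) then upgrades this to exponential contraction on all pairs, yielding the desired constant lower bound on $\gap_{\cB}$.

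For such a pair disagreeing only at $v$, I would decompose the infinitesimal rate of change of $\E[d_H(\sigma_t,\sigma'_t)]$ at $t=0$ into contributions from each block $\cB_i$ (each carrying a rate-$1$ Poisson clock). Three cases arise: (i) if $v\in \cB_i$, the boundary conditions on $\partial \cB_i$ coincide (since $\sigma,\sigma'$ agree off $v$ and $v\notin\partial\cB_i$), so the identity coupling used in the block update removes the disagreement, contributing $-1$; (ii) if $v\in\partial \cB_i$, Lemma~\ref{lem:expected-hamming-distance} bounds the expected new Hamming distance inside $\cB_i$ by $(\log R)^2+|\cB_i|e^{-R/8}+O(1)$; (iii) if $v\notin \cB_i\cup\partial \cB_i$, the distance is unchanged. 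Choosing $C_0$ large in the hypothesis $R\ge C_0\log(\max_i|\cB_i|)$ makes $|\cB_i|e^{-R/8}\le 1$, so the case-(ii) contribution is at most $C_1(\log R)^2$ for an absolute constant $C_1$.

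Summing over blocks, the expected rate of change of $d_H$ at $t=0$ is at most $-k_{\mathrm{int}}(v)+C_1(\log R)^2\,k_\partial(v)$, where $k_{\mathrm{int}}(v)$ and $k_\partial(v)$ count blocks containing $v$ in their interior, respectively on their boundary. Lemma~\ref{lem:vertices-boundary-of-blocks-vs-interior} provides $k_\partial(v)\le (32d\,2^d/R)\,k_{\mathrm{int}}(v)$, so once $R$ exceeds a $d$-dependent constant (specifically one for which $32d\,2^d C_1(\log R)^2/R\le 1/2$), the rate is at most $-k_{\mathrm{int}}(v)/2\le -1/2$. Here the last inequality uses $k_{\mathrm{int}}(v)\ge 1$, which I would verify directly from Definition~\ref{def:block-dynamics-blocks}: every vertex at distance at most $7R/8$ from $\good(h)$ lies in some type-$1$ block, while any vertex at distance more than $7R/8$ from $\good(h)$ is within $\ell^\infty$-distance $R/2$ of some vertex of $\Lambda_n^{(R)}$ which must then be in $\bad(h)$, placing the vertex inside the $0$-translate of the corresponding $B_\cC$. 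Continuous-time path coupling then gives $\E[d_H(\sigma_t,\sigma'_t)]\le d_H(\sigma_0,\sigma'_0)e^{-t/2}$ for every initial pair, a $W_1$-Wasserstein contraction at rate $1/2$, and standard comparison results for reversible chains yield $\gap_{\cB}\ge 1/2$, i.e., $\gap_{\cB}^{-1}\le 2$.

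The hard step is controlling the ``boundary feedback'' in case (ii): a single vertex of disagreement can in principle spawn up to $C_1(\log R)^2$ new disagreements when a block on whose boundary it lies is resampled, so if every vertex sat on the boundary of a constant fraction of the blocks it touched, the naive bound would diverge. It is precisely the geometric ratio bound from Lemma~\ref{lem:vertices-boundary-of-blocks-vs-interior} — a factor $1/R$ — combined with the hypothesis $R\ge C_0\log(\max_i|\cB_i|)$ (which tames the $|\cB_i|e^{-R/8}$ tail in Lemma~\ref{lem:expected-hamming-distance}) that together drive the contraction rate uniformly away from zero.
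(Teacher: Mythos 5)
Your proof is correct and follows essentially the same route as the paper: a path coupling over adjacent configurations with the same three cases, using Lemma~\ref{lem:expected-hamming-distance} to control the new disagreements when the disagreeing vertex sits on a block boundary and Lemma~\ref{lem:vertices-boundary-of-blocks-vs-interior} to beat the $(\log R)^2$ blow-up by the $O(1/R)$ boundary-to-interior block ratio. The only cosmetic difference is that you run the path coupling directly in continuous time to get a Wasserstein contraction at constant rate, whereas the paper establishes contraction $\theta = 1-\Theta(1/|\Lambda_n|)$ for the discrete-time block chain and then converts back to continuous time.
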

\begin{proof}
    The proof goes by path coupling. Let us consider the discrete time version of the block dynamics where a block is chosen uniformly at random at each discrete time step. The inverse gap of the discrete and continuous time chains are within a factor of the number of blocks of one another, which is evidently on the order of $|\Lambda_n| R^d$. 

    Consider the Hamming distance on configurations. Recalling the path coupling from \cite[Theorem 14.6]{LP}, for each pair of adjacent configurations $\sigma,\sigma^y$ that differ at vertex $y$, we wish to construct a coupling $\mathbf{P}$ of their respective 1-step transitions $X_1 \sim P_{\cB} \sigma$ and $Y_1 \sim P_\cB \sigma^y$ such that 
    \begin{align*}
        \mathbf{E}[d_H(X_1,Y_1)]\le \theta<1\,.
    \end{align*}
    If this holds for a $\theta<1$ for all pairs $(\sigma,\sigma^y)$ then 
    \begin{align*}
        \mathbf{E}[d_H(X_t,Y_t)]\le \theta^t |\Lambda_n|\qquad \text{and} \qquad \gap_{\cB}^{-1} \le (1-\theta)^{-1}\,.
    \end{align*}
    We prove this with $\theta = 1- \Theta(\frac{1}{|\Lambda_n|})$ for the discrete-time block dynamics, which implies the claimed bound for the continuous-time block dynamics. 

    Fix any $y\in \Lambda_n$, and let $\sigma^+,\sigma^-$ be two arbitrary configurations that agree on all sites except at $y$ where they take values $+,-$ respectively. Let $\cB_i$ be the next block whose clock rings (this is uniformly chosen amongst the blocks, and we use the same choice for the coupling of $X_1,Y_1$). On the event that $y\notin \cB_i \cup\partial \cB_i$, the expected Hamming distance between $X_1,Y_1$ is still $1$. On the event that $y\in \cB_i$, the identity coupling is used, the boundary conditions induced by $\sigma^+,\sigma^-$ on $\cB_i$ are identical, and the expected Hamming distance is $0$. Finally, on the event that $y\in \partial \cB_i$, the coupling of Lemma~\ref{lem:expected-hamming-distance} is used, whence the expected Hamming distance is at most $2(\log R)^2$ as long as $R$ is at least $C_0 \log |\cB_i|$. Putting these together, we see that 
    \begin{align*}
        \mathbf{E}[d_H(X_1,Y_1)] \le 1 - \frac{1}{|\cB|} \Big( |\{i: y\in \cB_i\}| - 2 |\{i: y\in \partial \cB_i\}| (\log R)^2\Big)\,.
    \end{align*}
    Noting that $|\cB|\le |\Lambda_n|R^d$ and $|\{i: y\in \cB_i\}| \ge (R/4)^d$, and then using Lemma~\ref{lem:vertices-boundary-of-blocks-vs-interior}, this gives 
    \begin{align*}
         \mathbf{E}[d_H(X_1,Y_1)] & \le 1- \frac{1}{4^d |\Lambda_n|} \Big( 1 - \frac{|\{i: y\in \partial \cB_i\}|}{|\{i:y\in \cB_i\}|} 2(\log R)^2\Big) \\ 
         & \le 1- \frac{1}{4^d |\Lambda_n|} \Big( 1- \frac{64 d 2^d (\log R)^2}{R}\Big)\,.
    \end{align*}
    At this point, we find that as long as $R$ is sufficiently large as a function of $d$, we have contraction with $\theta = 1- \Theta(\frac{1}{|\Lambda_n|})$ concluding the proof. 
\end{proof}

\begin{proof}[Proof of Theorem~\ref{thm:fast-mixing-large-variance}]
    Fix a realization of $\good(h)$ and let $\cB= (\cB_i)_i$ be the corresponding set of blocks defined per Definition~\ref{def:block-dynamics-blocks}. Suppose that $h$ is such that $\max_{w \in \Lambda_n^{(R)}} |\cC_{w,\bad}^{(R)}(\omega^h)| \le L$, (the complement of this event having $\mathbb P_h$-probability $e^{-L}$ by Corollary~\ref{cor:largest-bad-cluster}). By construction, this implies that $\max_i |\cB_i| \le R^d L$. 
    We now make the concrete choice 
    \begin{align*}
        R = C_1 \log L\,,
    \end{align*}
    for a sufficiently large constant $C_1$.  
    The quantity $\chi$, i.e., the maximum number of blocks confining a single vertex is easily seen to be at most $2 (R/4)^d \le O((\log L)^d)$. 

    Since $\max_i |\cB_i| \le L R^d$ for large $n$, if $C_1$ is sufficiently large then $R \ge C_0\log (\max_i |\cB_i|)$  so we can apply Lemma~\ref{lem:block-dynamics-gap} whence $\gap_{\cB}^{-1} \le C'$ for some $C'$. 
    
    At the same time, by Lemma~\ref{cor:single-block-mixing}, we get that 
    \begin{align*}
        (\min_i \min_{\phi} \gap_{\cB_i}^\phi)^{-1} \le C \exp(C R^d L^{\frac{d-1}{d}})\,.
    \end{align*}
     Combining these three bounds into Proposition~\ref{prop:block-dynamics}, we deduce that 
    \begin{align*}
        \gap_{\Lambda_n}^{-1} \le \exp \big(C L^{\frac{d-1}{d}} (\log L)^{d-1}\big)\,,
    \end{align*}
    concluding the proof of the theorem. 
\end{proof}

\subsection{SSM holds if the field is sufficiently anti-concentrated}
\label{subsec:large-variance-implies-ssm}
In this short subsection, we prove that if $|h|$ is i.i.d.\ and the variance of $|h|$ is sufficiently large, then $\SSM(C)$ holds. Note: this isn't enough to just reduce the proof with Assumption~\ref{ass:anticoncentrated-field} to the proof with Assumption~\ref{assump:SSM-RFIM0} because the proof with the $\SSM$ assumption requires independence of $(h_x)_x$, which the proof with Assumption~\ref{ass:anticoncentrated-field} does not require, and the reduction from $\WSM$ after stochastic localization does not have. 

\begin{lemma}\label{lem:ssm_large_fields}
    Suppose that Assumption~\ref{ass:anticoncentrated-field} holds. Then there is $C$ such that the $\RFIM$ has $\SSM(C)$.  
\end{lemma}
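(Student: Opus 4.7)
The plan is to re-use the disagreement percolation argument already developed in the proof of Lemma~\ref{lem:expected-hamming-distance} under Definition~\ref{def:anticoncentrated-good-blocks}, but now applied globally on the box $B$ to compare the measures $\mu_B^{\xi^+}$ and $\mu_B^{\xi^-}$ appearing in Definition~\ref{assump:SSM-RFIM0}. The influence percolation $\omega^\influence_x = \mathbf{1}_{|h_x|\le K}\vee \eta_x$ with $\eta_x$ i.i.d.\ Ber$(\rho)$ independent of $h$ is the mechanism along which disagreements can propagate from $z\in \partial B$ inward, and Assumption~\ref{ass:anticoncentrated-field} will make this percolation strictly sub-critical.

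First I would fix a realization of $h$, a box $B$ containing $o$, a site $z\in \partial B$, and arbitrary $\xi\in \{\pm 1\}^{\partial B\setminus\{z\}}$. Using the sequential monotone coupling already written down in the proof of Lemma~\ref{lem:expected-hamming-distance} — i.e., assign each $x$ an independent Unif$[0,1]$ variable $U_x$, reveal spins of $(\sigma,\sigma')\sim (\mu_B^{\xi^+},\mu_B^{\xi^-})$ in order of graph distance from $z$, and, whenever a site $x$ with $|h_x|\ge K$ is exposed, force $\sigma_x=\sigma'_x=\sgn(h_x)$ on the event $\{U_x\le 1-\rho\}$ — the key elementary check is that the worst-case conditional Ising mass on $\sgn(h_x)$ is at least
\[
\frac{e^{-2d\beta+K}}{e^{-2d\beta+K}+e^{2d\beta-K}}=1-\rho,
\]
uniformly over all boundary spins, so this prescription yields a valid coupling of $\mu_B^{\xi^+}$ and $\mu_B^{\xi^-}$ for \emph{every} $\xi$. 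Setting $\eta_x:=\mathbf{1}_{U_x>1-\rho}$ and $\omega^\influence_x = \mathbf{1}_{|h_x|\le K}\vee \eta_x$, the same reasoning as in Lemma~\ref{lem:expected-hamming-distance} shows that $\sigma_w=\sigma'_w$ for every $w$ lying outside the $\omega^\influence$-cluster of $z$. In particular
\[
d_{\sTV}\!\bigl(\mu_B^{\xi^+}(\sigma_o\in \cdot),\mu_B^{\xi^-}(\sigma_o\in \cdot)\bigr)
\le \mathbf{P}\bigl(o\in \cC_z(\omega^\influence)\bigr),
\]
and crucially the right-hand side does not depend on $\xi$, so the bound persists after taking the maximum over $\xi$ inside.

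Next I would check that $\omega^\influence$ is an i.i.d.\ Bernoulli percolation on $\Lambda$ of parameter at most $\tfrac{1}{40d}+\rho \le \tfrac{1}{20d}<\tfrac{1}{2d}$: independence uses Assumption~\ref{ass:anticoncentrated-field} that the $|h_x|$ are i.i.d.\ (so $(\mathbf{1}_{|h_x|\le K})_x$ are i.i.d.), together with the independence of the $\eta_x$'s and of $h$ and $\eta$. Since $\tfrac{1}{2d}$ is strictly above the critical Bernoulli threshold of $\mathbb{Z}^d$, standard sub-critical percolation estimates (e.g.\ by comparison to a Galton--Watson tree with mean offspring $2d\cdot\tfrac{1}{20d}=\tfrac{1}{10}<1$) give a constant $C_1=C_1(d)$ with
\[
\mathbb{P}\bigl(z\leftrightarrow o \text{ in } \omega^\influence\bigr)\le C_1 e^{-d(o,z)/C_1},
\]
for every pair $o,z\in \mathbb{Z}^d$. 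Combining these two displays and taking the expectation over $h$ (and $\eta$) yields exactly the $\SSM(C)$ bound with $C=C_1$.

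The only mild subtlety, and the one I would write most carefully, is that the same coupling has to be made to work uniformly over $\xi$: this is precisely why the \emph{worst-case} lower bound $1-\rho$ on the conditional probability of $\sgn(h_x)$ is used above, and why the resulting disagreement-blocking set $\omega^\influence$ is $\xi$-independent. No deeper Ising input is needed; everything reduces to the sub-criticality enforced by Assumption~\ref{ass:anticoncentrated-field}.
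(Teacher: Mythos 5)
Your proposal is correct and follows essentially the same route as the paper: a sequential monotone/disagreement-percolation coupling of $\mu_B^{\xi^+}$ and $\mu_B^{\xi^-}$ in which sites with $|h_x|\ge K$ block disagreement except with probability $\rho$, so that the disagreement cluster of $z$ is dominated by a subcritical ($\xi$-independent) Bernoulli percolation and the total-variation distance at $o$ decays exponentially in $d(o,z)$. The only difference is cosmetic: you make the two sources of openness explicit via $\omega^\influence=\mathbf{1}_{|h_x|\le K}\vee\eta_x$, whereas the paper folds them into a single Ber$(1/5d)$ bound on the conditional disagreement probability.
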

\begin{proof}
    Fix a domain $B$ containing the origin $o$, and fix a $z\in \partial B$. We consider the process $(\eta_x)$ which is $1$ if $|h_x| \le K$. As before, $K$ is chosen such that $\rho(K)$ from~\eqref{eq:rho-parameter} is at most $1/10d$, say. In that case, wherever $\eta_x =0$, the probability that $\sigma_x$ agrees with $\text{sign}(h_x)$ is at least $1-1/10d$, uniformly over the neighboring spins to $x$. 

    We construct a coupling of samples $\sigma^+ \sim \mu_{B}^{\xi^+}$ and $\sigma^-\sim \mu_{B}^{\xi^-}$ disagreeing at $z\in \partial B$, at the origin $\sigma_0$ as follows. Starting from neighbors of $z$, use the monotone coupling (with the same uniform random variables at each vertex) to iteratively reveal the spins of $\sigma^+$ and $\sigma^-$, exposing the maximal component of $z$ on which the two disagree. By the above reasoning, the probability that they disagree at a site $x$, even conditionally on the fields and spins at all other sites, is at most $1/5d$. Therefore, the disagreement region is confined by an independent $\text{Ber}(1/5d)$ percolation, and beyond the component of $z$ in this percolation, the two configurations are coupled via the identity coupling. Since the degree of $\mathbb Z^d$ is $2d$, the $\text{Ber}(1/5d)$ percolation is subcritical, and $\max_{\eta \in \{\pm 1\}^{\partial B\setminus \{z\}}} d_{\sTV}(\mu_B^{\xi^+}(\sigma^+_0),\mu_B^{\xi^-}(\sigma^-_0))$ is bounded by the indicator that this sub-critical percolation connects $z$ to $0$ in $B$. This event has probability at most $e^{ - d(0,z)/C}$ for some $C(d)$, so that also bounds the expectation of its indicator, yielding the claim.  
\end{proof}

\section{Construction of a super-polylogarithmic lower bound}
\label{sec:lower-bound}

Given the fact that our mixing time upper bound, even at very large variance is $n^{o(1)}$, one may wonder what the true mixing time should be. Standard arguments (see~\cite{Hayes-Sinclair}) give a coupon collecting lower bound of $O(\log n)$, which also applies in the absence of an external field. In what follows we show that when the temperature is below the zero-field critical point, the mixing time of the $\RFIM$ is at least super-polylogarithmically slower. 

To be precise, we construct an  $\exp((\log n)^{c})$ lower bound for the mixing time of the $\RFIM$ with an $O(1)$ variance, when the temperature $\beta$ is large. We will phrase this for Gaussian external fields though a distribution with density at zero, for instance, would suffice.  

\begin{Prop}\label{prop:mixing-lower-bound}
    For $\beta$ sufficiently large as a function of $d$, and i.i.d.\ Gaussian external fields $(h_x)_x$, the mixing time of the continuous-time Glauber dynamics for the $\RFIM$ on $\In$ with $\var(h_x) = \Omega(1)$ is at least $\exp((\log n)^{(d-1)/4d})$. 
\end{Prop}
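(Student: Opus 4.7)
The plan is to exhibit a bottleneck in the configuration space of the $\RFIM$ and invoke the standard Cheeger lower bound $\tau\mix^{\cont}\gtrsim \min(\mu_{\Lambda_n}(S),\mu_{\Lambda_n}(S^c))/Q(S,S^c)$ with $Q(S,S^c)$ the Glauber flow between $S$ and $S^c$ (as used in Section~\ref{sec:proof_algebraic_decay}). The bottleneck will arise from a \emph{Griffiths island}: a sub-cube $B \subset \Lambda_n$ of side length $\ell$ on which every external field $h_x$ is atypically small, and on which the $\RFIM$ therefore resembles the zero-field low-temperature Ising model on $B$ up to a small multiplicative correction. Peierls estimates inside $B$ will then yield an exponential-in-$\ell^{d-1}$ bottleneck between the ``mostly $+$'' and ``mostly $-$'' Ising phases of $B$, and the final bound follows by maximizing $\ell$ subject to such an island existing with high probability in $\Lambda_n$.

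For Step 1, I would show that for any $\varepsilon>0$ and any $\ell=\lfloor a(\log n)^{1/(4d)}\rfloor$ with $a=a(\varepsilon,d)$ small, with $\P_h$-probability tending to $1$ there exists $B\subset \Lambda_n$ of side length $\ell$ such that $\max_{x\in B}|h_x|\le \varepsilon$. This follows from a second-moment / Chung--Erd\H{o}s argument over the $\Theta((n/\ell)^d)$ disjoint $\ell$-cubes in $\Lambda_n$: Gaussian anti-concentration gives $\P(|h_x|\le\varepsilon)\ge c\varepsilon$, so the probability that a given cube is ``good'' is at least $(c\varepsilon)^{\ell^d}$, and these events are independent across disjoint cubes. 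Such a $B$ exists as long as $\ell^d \log(1/\varepsilon) \ll d\log n$, which is amply satisfied with $\ell^d$ as small as $(\log n)^{1/4}$.

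For Step 2 I would take the bottleneck set $S=\{\sigma:\sum_{x\in B}\sigma_x>0\}$ and establish two facts. First, both $\mu_{\Lambda_n}(S)$ and $\mu_{\Lambda_n}(S^c)$ are bounded away from $0$: conditional on any exterior configuration $\tau$ on $\Lambda_n\setminus B$, the induced law on $\sigma_B$ has Radon--Nikodym derivative in $[e^{-\varepsilon\ell^d},e^{\varepsilon\ell^d}]$ relative to the zero-field Ising model on $B$ with boundary condition $\tau$, and by $\pm$-symmetry of the Ising measure combined with a Peierls estimate at large $\beta$, both $S$ and $S^c$ have $\Omega(1)$ conditional probability under a typical boundary $\tau$. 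Second, $Q(S,S^c)\le \exp(-c\beta \ell^{d-1})$: every pair $(\sigma,\sigma')$ of Glauber-adjacent configurations with $\sigma\in S$, $\sigma'\in S^c$ has $|\sum_{x\in B}\sigma_x|\le 1$, and this forces an Ising contour in $B$ of $(d-1)$-surface area at least $c\ell^{d-1}$; a standard Peierls contour-sum calculation, perturbed by the in-field contribution to the Hamiltonian which is bounded by $\varepsilon\ell^d = o(\beta\ell^{d-1})$ for small $\varepsilon$, then gives the stated bound. Cheeger then produces $\tau\mix^{\cont}\gtrsim e^{c\beta\ell^{d-1}}\ge \exp((\log n)^{(d-1)/(4d)})$ at sufficiently large $\beta$.

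The main obstacle is ensuring that these Peierls estimates are robust to the arbitrary random exterior field on $\Lambda_n\setminus B$. A priori the exterior fields may conspire to impose a strongly asymmetric boundary condition on $\partial B$, destroying the $\pm$-symmetry of the two metastable wells in $B$ and collapsing the bottleneck. I would control this by further requiring $h$ to be typical on a buffer annulus around $B$, bounding the net bias on $\partial B$ through a Gaussian concentration estimate of order $\sqrt{\ell^{d-1}}$, which remains small compared to $\beta\ell^{d-1}$. The cost of requiring both the island and the annulus to be simultaneously typical is what forces the conservative choice $\ell\lesssim (\log n)^{1/(4d)}$, rather than the conjecturally optimal $(\log n)^{1/d}$ scaling that would match the upper bound in Theorem~\ref{mainthm:SSM}.
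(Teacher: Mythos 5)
Your Step 1 (finding a Griffiths island of side $\ell \asymp (\log n)^{1/(4d)}$ with all fields small) is fine and matches the paper's probabilistic estimate, but Step 2 has a genuine gap at the point where you claim that ``by $\pm$-symmetry of the Ising measure combined with a Peierls estimate, both $S$ and $S^c$ have $\Omega(1)$ conditional probability under a typical boundary $\tau$.'' The obstruction you try to control --- an asymmetric bias coming from the exterior \emph{fields}, handled by Gaussian concentration on an annulus --- is not the real one. At large $\beta$ with $O(1)$-variance fields, the exterior of $B$ is itself a low-temperature ferromagnet whose correlation length vastly exceeds $\ell$, so the \emph{spin} configuration that the Gibbs measure places on $\partial B$ is locally ordered: for a fixed typical $h$, the conditional law of $\sigma_{\partial B}$ is overwhelmingly concentrated on (say) mostly-$+$ configurations, with the competing phase exponentially suppressed by far-away field fluctuations. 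Under a mostly-$+$ boundary condition the conditional measure $\mu_B^{\tau}$ has a single well: $\mu_B^\tau(S^c)$ and $\mu_B^\tau(\{|\sum_{x\in B}\sigma_x|\le 1\})$ both require a droplet of volume $\approx \ell^d/2$ and hence carry comparable Peierls costs $e^{-c\beta \ell^{d-1}}$, so the conductance ratio $\min(\mu(S),\mu(S^c))/Q(S,S^c)$ is $O(\mathrm{poly}(\ell))$ and Cheeger gives nothing. (This is just the classical fact that low-temperature Glauber dynamics with all-plus boundary conditions has no magnetization bottleneck.) No concentration estimate on the annulus fields can restore the two-well structure, because the asymmetry is produced by the exterior spins, not the exterior fields.

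The paper's proof avoids this by not relying on the Gibbs measure to supply a symmetric boundary: it conditions on the rare event that the fields \emph{on} $\partial B_i$ are huge (magnitude $\ge m^d$) with signs prescribed by a boundary condition $\zeta_m$ that is already known to make the \emph{zero-field} Ising dynamics on $B_i$ slow (mixed $\pm$ in $d=2$; mostly $-$ with a $+$ strip in $d\ge 3$, via Dobrushin interface rigidity). These huge fields pin $\sigma_{\partial B_i}\equiv\zeta_m$ for time $e^{\Omega(m^d)}$ regardless of the exterior, and the argument is then dynamical: the projected chain on $B_i$ coincides with the slow zero-field chain with frozen boundary $\zeta_m$, so it cannot equilibrate by time $e^{m^{d-1}}$, and a test event on the interior spins witnesses total-variation distance $1/4-o(1)$ for the full chain. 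If you want to salvage a static bottleneck argument you would similarly have to engineer the boundary condition through the field realization (e.g., force $\zeta_m$ on $\partial B$ and take $S$ to be the event defining the known bottleneck of $\mu_B^{\zeta_m}$), and then a conductance bound for the full chain would additionally require lower-bounding the probability that $\sigma_{\partial B}=\zeta_m$ under $\mu_{\Lambda_n}$ --- at which point you have essentially reconstructed the paper's argument in a more cumbersome form.
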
 

The lower bound in fact in all $(d,\beta)$ regimes where there exists a (sequence of) boundary conditions $\zeta_L$ under which the mixing time of the zero-field Ising model on boxes of side-length $L$ with boundary conditions $\zeta_L$ is  $\exp(\Omega(L^{d-1}))$. We give concrete examples below when $d=2$ for all $\beta>\beta_c(d)$, and when $d\ge 3$ for $\beta$ sufficiently large depending on $d$, but we expect that even when $d\ge 3$ such a boundary condition exists for all $\beta>\beta_c(d)$. 

\begin{Ex}
    \label{ex:slow-bc-low-temp-Ising}
    In $d=2$ and any $\beta>\beta_c$, if $\zeta_L$ is $+$ on the top and bottom of $\Lambda_L$ and $-$ on the left and right, it induces $\exp(L^{d-1})$ slow mixing (see e.g.,~\cite[Theorem 1 together with Remark 1.2]{GL16c}). In $d\ge 3$ and $\beta>\beta_0$ sufficiently large, if $\zeta_L$ is all $-$, except a strip of $+$ of width $\epsilon n$, say $\{x\in \partial \Lambda_L: x_d\in [\frac{n}{2},\frac{n}{2}+\epsilon n]\}$, it induces slow mixing. The slowdown in this latter case follows from the rigidity of the $+/-$ Dobrushin interface~\cite{Dobrushin72a}.  
\end{Ex}

We expect the true order of the inverse gap and mixing time to be $\exp((\log n)^{\frac{d-1}{d} +o(1)})$ so the factor $4$ in the denominator of the bound of Proposition~\ref{prop:mixing-lower-bound} is likely sub-optimal.

\begin{proof}[Proof of Proposition~\ref{prop:mixing-lower-bound}]
Let $m=(\log n)^{1/4d}$. On a cube of side-length $m$, let $\zeta_m$ be the boundary condition assignment of Example~\ref{ex:slow-bc-low-temp-Ising}. The domain $\Lambda_n$ can evidently be tiled by $\lfloor(\frac{n}{m+1})^d\rfloor$ many disjoint cubes of side-length $m$, at distance at least $2$ from one another. Denote these cubes by  $(B_i)_{i}$. We will use the event that one of these has a bad realization of the external field to embed the assumed slow mixing with boundary conditions $\zeta_m$ into $B_i$ for some $i$. For each $i$, let $E_i$ be the following $h$-measurable event:  
\begin{enumerate}
    \item For all $v\in \partial B_i$, the external field $h_v$ has $|h_v|\ge m^d$, and sign given by $\zeta_m$; and 
    \item Every vertex in $B_i$ has external field at most $1/m^d$ in absolute value.
\end{enumerate}
By bounds on the Gaussian distribution function, there exists a constant $c$ depending on $\text{Var}(h_v)$, such that this pair of events has probability $(e^{ - cm^{2d}})^{m^{d-1}}$ for item~(1), and $(c/m^d)^{m^d}$ for item~(2). Since these events are independent, $\mathbb P(E_i)\ge e^{ - m^{3d}} \ge n^{-o(1)}$, whence the independence of $(E_i)_i$ and standard concentration for a binomial random variable implies that 
\begin{align*}
    \mathbb P\Big(\bigcup_{i=1}^{(\frac{n}{m+1})^d} E_i\Big) \ge 1-o(1)\,. 
\end{align*}
We claim that on the event $\bigcup_i E_i$ the mixing time is at least $e^{m^{d-1}}$. Take any such external field realization, and let $B_\star = B_i$ for an (arbitrary) $i$ such that $E_i$ holds. Let $X_{t,\star}$ be the projection of the Glauber dynamics onto $B_\star\cup \partial B_\star$, initialized from $\zeta_m$ on $\partial B_\star$, initialized interior to $B_\star$ from the initialization attaining the zero-field $e^{m^{d-1}}$ mixing time guaranteed to exist per Example~\ref{ex:slow-bc-low-temp-Ising}, and initialized arbitrarily outside of $B_\star \cup\partial B_\star$.

We begin by claiming that the total-variation distance between $X_{t,\star}$ and a Glauber dynamics chain on $B_*$ with frozen $\zeta_m$ boundary conditions is $o(1)$ for all $t\le e^{m^d}$. Indeed, by Poisson tail bounds and a coupon collector bound, with probability $1-o(1)$, there are at most $2 t m^d$ many clock rings on $B_\star \cup \partial B_\star$ by time $t$. In each of these clock rings, the probability that a site on $\partial B_\star$ changes sign is at most $e^{-m^d + 2\beta d}$. Therefore, if $\widetilde X_{t,\star}$ is a Glauber dynamics only making updates on $B_\star$, with boundary conditions $\zeta_m$ on $\partial B_\star$, with $\widetilde X_{0,\star}= X_{0,\star}$, then under the identity coupling of updates inside $B_\star$, with probability $1-o(1)$ we have $\widetilde X_{t,\star} = X_{t,\star}$ for all $t= e^{ o(m^d)}$.

We next claim that the mixing time of $\widetilde X_{t,\star}$ is bounded by $e^{ \Omega(m^{d-1})}$. To see this, observe that the ratio of the stationary measure on $B_\star$ with its external field and without any field, is at most $\exp(\sum_{v\in B_\star} |h_v|) = e^{ O(1)}$ since $|h_v|\le m^{-d}$ for all $v\in B_\star$; furthermore, the transition rates for the Glauber dynamics on $B_\star$ with and without field are within a factor of $e^{m^{-d}} = 1+o(1)$ of one another. Together, the variational form of the spectral gap implies that the spectral gap of $\widetilde X_{t,\star}$ is within a constant factor of the spectral gap of Glauber dynamics on a cube of side-length $m$ with boundary conditions $\zeta_m$ and no external field. By the choice of $\zeta_m$, this inverse gap is $\exp(\Omega(m^{d-1}))$, and in turn the mixing time of $\widetilde X_{t,\star}$ is at least $\exp(\Omega(m^{d-1}))$.


Therefore, there exists an event measurable with respect to the interior spins of $B_\star$ which gets at least $1/4$ more mass under the stationary measure on $B_\star$ than it does under $\widetilde{X}_{t,\star}(B_\star)$ while $t\le e^{cm^{d-1}}$. That implies the same for $X_{t,\star}(B_\star)$ with $1/4$ replaced by $1/4+o(1)$. Extending this event to an event on the full configuration on $\Lambda_n$ by allowing arbitrary configurations exterior to $B_\star \cup \partial B_\star$, this implies the claimed lower bound of $\exp(\Omega(m^{d-1}))$ on the mixing time of $X_{t}$.
\end{proof}

\printbibliography

\end{document}